\newcommand\stout{\bgroup\markoverwith{\color{red}{\rule[0.5ex]{2pt}{1.4pt}}}\ULon}
\newcommand{\seq}[1]{\pmb{\mathsf{#1}}}
\newcommand{\Xseq}[1]{\mathsf{#1}}
\newcommand{\lift}[2][]{{\rm L}_{#1}(#2)}
\newcommand{\ball}[2][]{{\rm N}_{#2}\ifthenelse{\equal{A#1}{A}}{}{^{#1}}}
\newcommand{\bbbn}{\mathbb{N}}
\definecolor{shadecolor}{gray}{.85}%
\definecolor{tintedcolor}{gray}{.80}%
\definecolor{mytintedcolor}{gray}{.95}%
\newdimen\svparindent
\newcounter{tmpthm}
\newenvironment{mytinted}{%
  \MakeFramed {\FrameRestore}}%
{\endMakeFramed}
\newenvironment{great}{%
\fboxsep=12pt\relax

        \vbox\bgroup\begin{mytinted}%
        \list{}{\leftmargin=12pt\rightmargin=2\leftmargin\leftmargin=\z@\topsep=\z@\relax}%
        \expandafter\item\parindent=\svparindent
        \relax}%
{\endlist\end{mytinted}\egroup}
\newlength{\graphshift}
\newtheorem{theorem}{Theorem}[subsection]
\newtheorem{proposition}[theorem]{Proposition}
\newtheorem{lemma}[theorem]{Lemma}
\theoremstyle{definition}
\newtheorem{definition}[theorem]{Definition}
\newtheorem{example}[theorem]{Example}
\theoremstyle{remark}
\newtheorem{conjecture}[theorem]{Conjecture}
\newsavebox{\mybox}
\newlength{\mydepth}
\newlength{\myheight}
\medskip\begin{lrbox}{\mybox}\hspace{1em}\begin{minipage}{\textwidth-1em}}%
  \noindent\makebox[0pt]{\rule[-\mydepth]{1pt}{\myheight}}%
\begin{document}

\title{Local-Global Convergence,\\
an analytic and structural approach}
\author{Jaroslav Ne{\v s}et{\v r}il}
\address{Jaroslav Ne{\v s}et{\v r}il\\
Computer Science Institute of Charles University (IUUK and ITI)\\
   Malostransk\' e n\' am.25, 11800 Praha 1, Czech Republic}
%\curraddr{}
\email{nesetril@iuuk.mff.cuni.cz}
\thanks{Supported by grant ERCCZ LL-1201 
and CE-ITI P202/12/G061, and by the European Associated Laboratory ``Structures in
Combinatorics'' (LEA STRUCO)}

%    author two information
\author{Patrice Ossona~de~Mendez}
\address{Patrice~Ossona~de~Mendez\\
Centre d'Analyse et de Math\'ematiques Sociales (CNRS, UMR 8557)\\
  190-198 avenue de France, 75013 Paris, France
	--- and ---
Computer Science Institute of Charles University (IUUK)\\
   Malostransk\' e n\' am.25, 11800 Praha 1, Czech Republic
   --- and ---
 Department of Mathematics, Zhejiang Normal University, China  
   }
\email{pom@ehess.fr}
\thanks{Supported by grant ERCCZ LL-1201 and by the European Associated Laboratory ``Structures in
Combinatorics'' (LEA STRUCO), and partially supported by ANR project Stint under reference ANR-13-BS02-0007}
\dedicatory{In memory Bohuslav Balcar.}
%    \date is required; it is the date received by the editor.
\date{\today}
\keywords{structural limit, Borel structure, modeling, local-global convergence}
\subjclass[2010]{Primary 03C13 (Finite structures), 03C98 (Applications of model theory), 05C99 (Graph theory)}.
\begin{abstract}
Based on methods of structural convergence  we provide a unifying view of local-global convergence,  fitting to model theory and analysis. The general approach outlined here provides a possibility to extend the theory of local-global convergence to graphs with unbounded degrees. As an application, we extend previous results on continuous clustering of local convergent sequences and prove the existence of modeling quasi-limits for local-global convergent sequences of nowhere dense graphs.
\end{abstract}
\maketitle
\begin{quote}
{\em The true logic of the world is in the calculus of probabilities}\\
\flushright James Clerk Maxwell
\end{quote}

\section{Introduction}
The study of graph limits recently gained a strong interest, motivated both by the study of large networks and the emerging studies of real evolving networks. The different notions of graph limit ({\em left convergence} \cite{Borgs2012a, Borgs2005,  Borgs20081801, Borgs2012, Lov'asz2006}, {\em local convergence} \cite{Benjamini2001}), and the basic notions of graph similarity on which they are based, opens a vast panorama.
These frameworks have in common to be built on statistics of locally defined motives when the vertices of the graphs in the sequence are sampled uniformly and independently. 
A unified framework for the study of convergence of structures has been introduced by the authors in \cite{CMUC}.

In this setting the  notion of convergence is, in essence, model theoretic, and relies on the the following notions:

For a $\sigma$-structure $\mathbf A$ and a first-order formula $\phi$ (in the language of $\sigma$, with free variables $x_1,\dots,x_p$),  we denote by $\phi(\mathbf A)$ the {\em satisfying set} of $\phi$ in $\mathbf A$:
$$
\phi(\mathbf A)=\{(v_1,\dots,v_p)\in A^p:\ \mathbf{A}\models\phi(v_1,\dots,v_p)\},
$$
and we define the  {\em Stone pairing} of $\phi$ and $\mathbf A$ as 
the probability 
\[
\langle\phi,\mathbf{A}\rangle=\frac{|\phi(\mathbf{A})|}{|A|^p}
\]
that $\mathbf{A}$ satisfies $\phi$ for a (uniform independent) random interpretation of the random variables.

A sequence $\seq{A}=(\mathbf{A}_n)_{n\in\bbbn}$ of finite $\sigma$-structures
 is {\em ${\rm FO}$-convergent} if
 the sequence \linebreak $\langle\phi,\seq{A}\rangle=(\langle\phi,\mathbf{A}_n\rangle)_{n\in\bbbn}$ converges for every first-order formula $\phi$.
One similarly defines a weakened
notion of {\em $X$-convergence} (for a fragment $X$ of first-order logic) by restricting the range of the test formulas $\phi$
to $X$. In particular, for a sequence of graphs with growing orders, the  {\em ${\rm QF}$-convergence} (that is of convergence driven by the fragment ${\rm QF}$ of quantifier-free formulas) is equivalent to left convergence, and the {\em ${\rm FO}^{\rm local}$-convergence} (that is of convergence driven by the fragment ${\rm FO}^{\rm local}$ of the so-called local formulas) is equivalent to local convergence when restricting to sequences of graphs with bounded degrees \cite{CMUC}. The study of $X$-convergence of structures and the related problems is called shortly ``structural limits''. It extends all previously considered types of non-geometric convergence of combinatorial structures (two above and also, e.g. permutations) to general  relational structures. A survey of structural limits can be found in \cite{modeling,Nevsetvril2014}.

In order to strengthen the notion of local convergence of sequences of connected bounded degree graphs, the notion of {\em local-global convergence} has been introduced by Hatami and Lov\'asz \cite{hatami2014limits}, based on the framework introduced in \cite{bollobas2011sparse}. The orignal definition of local-global convergence, which is based on the total variation distance between the distributions of colored neighborhoods of radius $r$ in  the graphs in the sequence, is admittedly quite technical (see Definitions~\ref{def:lHc} and~\ref{def:LG-HL}). Roughly speaking, the strengthening of local convergence into local-global convergence relates to the extension of the considered properties from first-order logic to existential monadic second order logic.

In this paper we introduce the notion of {\em lift-Hausdorff convergence}, which is defined generally for graphs and relational structures, based on a simple subsequence completion property (see Definition~\ref{def:LG}). The basic notions underlying this definition are the model theoretic notions of {\em lift} of a structure (that one can see as an augmentation of a structure by colors, new relations, etc.) and the dual notion of {\em shadow} consisting in forgetting all the additional relations of a lift (see Fig.~\ref{fig:lift}). Note that the notions of lift and shadow are close to model theoretic notions of expansion and reduct.

\begin{figure}[h]
\begin{center}
	\includegraphics[width=.5\textwidth]{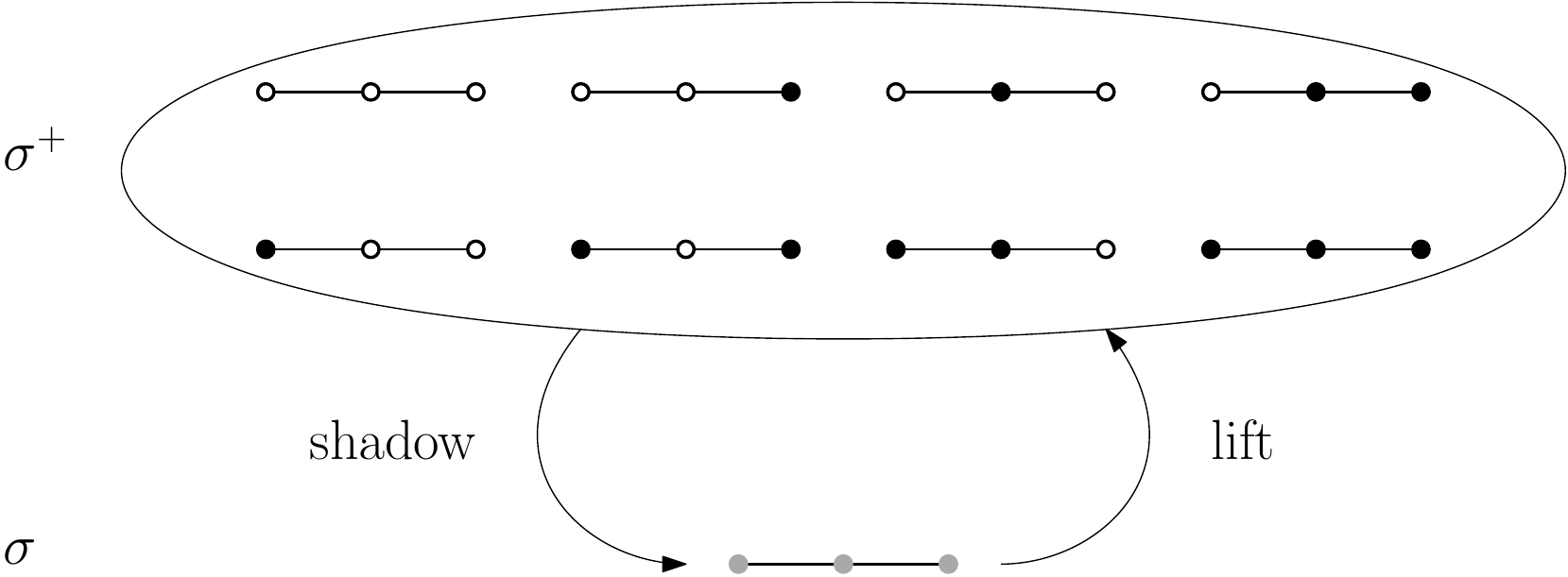}
\end{center}
\caption{The {\em shadow} operation consists in forgetting some of the relations, while the {\em lift} operation consists in adding some relations in all possible ways.}
\label{fig:lift}
\end{figure}

 A lift/shadow pair is determined by the data of two signatures $\sigma\subset\sigma^+$, the signature of the shadows and the signature of the lifts. Given a lift/shadow pair, the corresponding notion of lift-Hausdorff convergence is (roughly) defined as follows:
%\begin{sidebar}
		A sequence $\seq{A}=(\mathbf A_n)_{n\in\mathbb N}$ is {\em lift-Hausdorff} convergent if, for every convergent subsequence $\seq{A}_f^+=(\mathbf A_{f(n)}^+)_{n\in\mathbb N}$ of lifts  there exists a (full) convergent sequence  $\seq{A^*}$ of lifts extending $\seq{A}_f^+$ .
%\end{sidebar}
(In this definition, the condition that $\seq{A}_f$ is a subsequence of lifts means that $\mathbf A_{f(n)}^+$ is a lift of $\mathbf A_{f(n)}$ and $f:\mathbb N\rightarrow\mathbb N$ is an increasing function defining a subsequence of indices; the condition that $\seq{A}^*$ extends $\seq{A}_f^+$ means that  $\mathbf A_{f(n)}^*=\mathbf A_{f(n)}^+$ for every integer $n$.)

We prove that this notion indeed extends the usual notion of local-global convergence in the case of monadic lifts and gives a characterization of lift-Hausdorff convergent sequences as Cauchy sequences for an appropriate metric (Theorem~\ref{thm:Cauchy}).
Note that the case of monadic lifts can be interpreted by means of first-order formulas with set parameters. 

We give a basic representation theorem for corresponding limits, which is based on the representation theorem for structural limits \cite{CMUC}: every limit of a lift-Hausdorff convergent sequence maybe represented by non-empty closed subset of the space of probability measures on some Stone space (Theorem~\ref{thm:LGrep}). In Section~\ref{sec:quasi} we discuss the possible notion of a limit object for lift-Hausdorff convergent sequences. 
% (in the spirit of the representation of local-global limits by graphings \cite{hatami2014limits}). 
Two possible notions can be considered:
\begin{itemize}
	\item a strong notion of limit $\mathbf L$ for a sequence $\seq{A}$, such that for every 
	convergent sequence $\seq{A}^+$ of lifts there exists an admissible lift $\mathbf L^+$ of $\mathbf L$ that is the limit of $\seq{A}^+$ and, conversely, for every admissible lift $\mathbf L^+$ of $\mathbf L$ there exists a convergent sequence $\seq{A}^+$ of lifts with limit $\mathbf L^+$;
	\item a weaker notion of limit $\mathbf L$ (in the spirit of the representation of local-global limits by graphings \cite{hatami2014limits}) where we ask that for every $\epsilon>0$ and every convergent sequence $\seq{A}^+$ of lifts there exists an admissible lift $\mathbf L^+$ of $\mathbf L$ such that for every formula $\phi$ we have 
	\begin{equation}
	\label{eq:close}
	\bigl|\langle\phi,\mathbf L^+\rangle-\lim_{n\rightarrow\infty}\langle\phi,\mathbf A_n^+\rangle\bigr|<c_\phi \epsilon
	\end{equation}
	(where $c_\epsilon$ is a positive constant depending only on $\phi$) 
	and, conversely, for every admissible lift $\mathbf L^+$ of $\mathbf L$ there exists a convergent sequence $\seq{A}^+$ of lifts such that \eqref{eq:close} holds.
\end{itemize}

In these definitions, an essential difficulty lies in the definition of admissible lifts. In the case of local-global convergent sequences of bounded degree graphs with a graphing limit, the notion of admissible lift corresponds to Borel colorings. In a more general setting, an admissible lift of a modeling should (at least) be a modeling itself. However, Borel colorings of a fixed graphing does not induce a closed subset of unimodular probability measures, as noticed in 
 \cite{hatami2014limits} and, more generally, probability measures associated to modeling lifts of a fixed modeling does not form a closed subset, what is problematic for the stronger notion of limit.
 
 When considering the weaker (standard) definition of a limit, the reverse direction appears to be quite difficult to handle in general. Indeed, in the bounded degree case, the $r$-neighborhood of a set with measure at most $\epsilon$ has measure a most $d^r\epsilon$, what allows to easily approximates Borel colorings using a countable base. However such a property does not hold in the general unbounded degree case. 
 
 For this reason, we only consider one direction in the definition of limit:  a {\em quasi-limit} $\seq{A}$ is a modeling such that for every $\epsilon>0$, every formula $\phi$ and every convergent sequence $\seq{A}^+$ of lifts there exists a modeling lift $\mathbf L^+$ of $\mathbf L$ such that \eqref{eq:close} holds. 
In this setting, we prove  that modelings (introduced in \cite{limit1}) are not only limit objects for FO-convergent nowhere dense sequences (as proved in \cite{modeling_jsl}) but also quasi-limits for (monadic) lift-Hausdorff  convergent nowhere dense sequences (see Theorem~\ref{thm:quasilimit}).

\medskip

This paper is organized as follows:
\renewcommand{\contentsname}{\vspace{-2\baselineskip}}
\setcounter{tocdepth}{2}
\tableofcontents

Let us end this introduction by few remarks. The lifts involved in our local-global structural convergence are all monadic (and can be seen as coloring of vertices). It follows that the expressive power of such lifts is restricted to embeddings and classes which are hereditary. If we would consider more general lifts, such as coloring of the edges \footnote{This would require to add some first-order restrictions on the lifts, what would not fundamentally change the framework presented in this paper.} then we could represent monomorphisms (not induced substructures) which in turn leads to monotone classes. Monotone classes of graphs which have modeling limits (of which graphing limits are a particular case) were characterized in \cite{modeling_jsl} and coincides with nowhere dense classes (of graphs). This also coincides (in the case of monotone classes of graphs) with the notion of NIP and stable classes \cite{Adler2013} (See also \cite{SurveyND}). For hereditary classes the structure theory and the existence of modeling limits is more complicated (see \cite{modeling}) and local-global convergence seems to provide a useful framework.

\section{Preliminaries}	
\subsection{Relational Structures and First-Order Logic}
A {\em signature} $\sigma$ is a set of relation symbols with associated arities. In this paper we will consider countable signatures. A {\em $\sigma$-structure} $\mathbf A$ is defined by its {\em domain} $A$, which is a set, and by interpreting each relation symbol $R\in\sigma$ of arity $k$ as a subset of $A^k$. We denote by ${\rm Rel}(\sigma)$ the set of all finite $\sigma$-structures and by $\mathscr{R}\rm{el}(\sigma)$ the class of all $\sigma$-structures.

A {\em first-order formula} $\phi$ in the language of $\sigma$-structures is a formula constructed using disjunction, conjunction, negation and quantification over elements, using the relations in $\sigma$ and the equality symbol. A variable used in a formula $\phi$ is {\em free} if it is not bound by a quantifier. We always assume that free variables are named $x_1,\dots,x_n,\dots$ and we consider formulas obtained by renaming the free variables as distinct. For instance, $x_1=x_2$ and $x_2=x_3$ are distinct formulas. We also consider two constants, $0$ and $1$ to denote the {\em false} and {\em true} statements.
We denote by ${\rm FO}(\sigma)$ the (countable) set of all first-order formulas in the language of $\sigma$-structures. The conjunction and disjunction of formulas $\phi$ and $\psi$ are denoted by $\phi\wedge\psi$ and $\phi\vee\psi$, and the negation of $\phi$ is denoted by $\neg\phi$. We say that two formulas $\phi$ and $\psi$ are {\em logically equivalent}, which we denote by $\phi\iff\psi$, if one can infer one from the other (i.e. $\phi\vdash\psi$ and $\psi\vdash\phi$). Note that in first-order logic the notions of syntactic and semantic equivalence coincides. In this context we denote by $[\phi]$ the equivalence class of $\phi$ with respect to logical equivalence. It is easily checked that $\mathcal B_\sigma={\rm FO}(\sigma)/\!\!\!\!\iff$ is a countable Boolean algebra with minimum $0$ and maximum $1$, which is called the {\em Lindenbaum-Tarki algebra}  of ${\rm FO}(\sigma)$.

In this paper we consider special fragments of first-order logic (see Table~\ref{tab:frag}).

\begin{table}
\begin{tabular}{|c|l|}
	 \hlx{sshv}
Symbol&\multicolumn{1}{c|}{Fragment}\\
\hlx{vhv}
${\rm FO}(\sigma)$ or ${\rm FO}$&All first order formulas\\
${\rm FO}_p(\sigma)$ or ${\rm FO}_p$&All first order formulas with free variables within $\{x_1,\dots,x_p\}$\\
${\rm FO}_0(\sigma)$ or ${\rm FO}_0$&Sentences\\
${\rm FO}^{\rm local}(\sigma)$ or ${\rm FO}^{\rm local}$&Local formulas\\
${\rm FO}_p^{\rm local}(\sigma)$ or ${\rm FO}_p^{\rm local}$&Local formulas with free variables within $\{x_1,\dots,x_p\}$\\
${\rm QF}(\sigma)$ or ${\rm QF}$&Quantifier free formulas\\
\hlx{vhss}
\end{tabular}
\caption{Principal fragments of first-order logic considered in this paper.} \label{tab:frag}
\end{table}

The Lindenbaum-Tarski algebra of a fragment $X\subseteq {\rm FO}(\sigma)$ will be denoted by $\mathcal B_\sigma^X$. For instance, $\mathcal B_\sigma^{\rm QF}={\rm QF}(\sigma)/\!\!\!\!\iff$.
\subsection{Functional Analysis}
Basic facts from Functional Analysis, which will be used in this paper, are recalled now.

A {\em standard Borel space} is a Borel space associated to 
a Polish space, i.e. a measurable space $(X,\Sigma)$ such that  there exists a metric on $X$ making it a separable complete metric space with $\Sigma$ as its Borel $\sigma$-algebra. Typical examples of standard Borel spaces are $\mathbb R$ and the Cantor space. Note that according to Maharam's theorem, all uncountable standard Borel spaces are (Borel) isomorphic. (The authors cannot resist the temptation to mention  Balcar's award-winning work \cite{Balcar2005} in this context.)

In this paper, we shall mainly consider compact separable metric spaces. Note that if $(M,{\rm d})$ is a compact separable metric space, then linear functionals on the space of real continuous functions on $M$ can be represented, thanks to Riesz-Markov-Kakutani representation theorem, by Borel measures on $M$.
We denote by $P(M)$ the space of all probability measures on $M$. A sequence of probability measures $(\mu_n)_{n\in\mathbb N}$ is {\em weakly convergent} if
$\int f\,{\rm d}\mu_n$ converges for every (real-valued) continuous function\footnote{We do not have to assume that $f$ has compact support as we assumed that $(M,d)$ is compact.} $f$. Weak convergence defines the {\em weak topology} of $P(M)$, and (as we assumed that $(M,d)$ is a compact separable metric space) this space is compact, separable, and metrizable by the L\'evy-Prokhorov metric (based on the metric ${\rm d}$):
\[
{\rm d}^{\rm LP}(\mu_1,\mu_2)=\inf\Bigl\{\epsilon>0\mid \mu_1(A)\leq \mu_2(A^\epsilon)+\epsilon\text{ and }\mu_2(A)\leq \mu_1(A^\epsilon)+\epsilon\text{ for all Borel }A\Bigr\},
\]
where $A^\epsilon=\{x\in M\mid (\exists y\in M)\ {\rm d}(x,y)<\epsilon\}$.

The Hausdorff metric is defined on the space of nonempty closed bounded subsets of a metric space.
Consider a compact metric space  $(M,{\rm d})$, and let  $\mathscr C_M$ be the space of non-empty closed  subsets of $M$ endowed with the {\em Hausdorff metric} defined by
\[
{\rm d}^{\rm H}(X,Y)=\max\bigl(\adjustlimits\max_{x\in X}\min_{y\in Y}{\rm d}(x,y),\max_{y\in Y}\min_{x\in X}{\rm d}(x,y)\bigr).
\]

One of the most important properties of Hausdorff metric is that the space of non-empty closed subsets of a compact set is also compact (see \cite{hausdorff1962set}, and \cite{urysohn1951works} for an independent proof). Hence the space $(\mathscr C_M,{\rm d}^{\rm H})$ is compact.

We can use the inverse function of a surjective continuous function from a compact metric space $(M,{\rm d})$  to a (thus compact) Hausdorff space $T$  to isometrically embed the space $\mathscr C_T$ (of non-empty closed subsets of $T$) into $(\mathscr C_M,{\rm d}^{\rm H})$. Then, using the natural injection $\iota:T\rightarrow\mathscr C_T$ (defined by $\iota(x)=\{x\}$) we pull back the Hausdorff distance on $\mathscr C_M$ into $T$: 

\[{\rm d}_T(x,y)={\rm d}^{\rm H}(f^{-1}(x),f^{-1}(y)).\] 

The situation is summarized in the following diagram.
$$\xy\xymatrix{
(M,{\rm d})\ar@{->>}^-f[r]&T\ar@{ >->}^-{\iota}[d]\ar@{ >->}_{f^{-1}}[dl]\\
(\mathscr C_M,{\rm d}^{\rm H})&\mathscr C_T\ar@{ >->}^-{\hat f^{-1}}[l]
}\endxy
$$
In this diagram $f^{-1}$ denotes the mapping from $T$ to $\mathscr C_M$ and $\hat f^{-1}$ the corresponding mapping from $\mathscr C_T$ to $\mathscr C_M$ defined by $\hat f^{-1}(X)=\{y\mid f(y)\in X\}$.
Also remark that the metric ${\rm d}_T$ defined on $T$ is usually not compatible with the original topology of $T$.

For the topology defined by the metric ${\rm d}_T$, one can
 define the compactification of $T$, which may be identified with the closure of the image of $T$ (by $f^{-1}\circ\iota$) in $\mathscr C_M$.

We shall make use of the following folklore result, which we prove here for completeness.
\begin{lemma}
\label{lem:contpush}
	Let $X,Y$ be compact standard Borel spaces and 
	let $f:X\rightarrow Y$ be continuous. Let $P(X)$ and $P(Y)$ denote the metric space of probability measures on $X$ and $Y$ (with L\'evy-Prokhorov metric).
	
	Then the pushforward by $f$,  that is the mapping $f_*:P(X)\rightarrow P(Y)$ defined by $f_*(\mu)=\mu\circ f^{-1}$, is continuous.
\end{lemma}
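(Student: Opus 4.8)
The plan is to reduce the statement to the standard fact that weak convergence of probability measures is equivalent to convergence of integrals of bounded continuous functions, and then use continuity of $f$ to transport test functions from $Y$ to $X$. First I would recall that, since $(X,{\rm d}_X)$ and $(Y,{\rm d}_Y)$ are compact metric spaces, the weak topology on $P(X)$ (resp. $P(Y)$) coincides with the topology induced by the L\'evy--Prokhorov metric, so it suffices to prove that $f_*$ is sequentially continuous for the weak topology: whenever $\mu_n\to\mu$ weakly in $P(X)$, then $f_*(\mu_n)\to f_*(\mu)$ weakly in $P(Y)$.

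The key step is the change-of-variables identity: for every bounded Borel function $g$ on $Y$,
\[
\int_Y g\,{\rm d}(f_*\mu)=\int_Y g\,{\rm d}(\mu\circ f^{-1})=\int_X (g\circ f)\,{\rm d}\mu,
\]
which is immediate for indicator functions (as $(f_*\mu)(B)=\mu(f^{-1}(B))$ by definition) and extends to bounded Borel $g$ by linearity and monotone/dominated convergence. Now take any real-valued continuous $g$ on $Y$; since $f$ is continuous, $g\circ f$ is a bounded continuous function on the compact space $X$, hence $\int_X (g\circ f)\,{\rm d}\mu_n\to\int_X (g\circ f)\,{\rm d}\mu$ by weak convergence $\mu_n\to\mu$. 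Applying the identity above to both sides gives $\int_Y g\,{\rm d}(f_*\mu_n)\to\int_Y g\,{\rm d}(f_*\mu)$ for every continuous $g$ on $Y$, which is exactly weak convergence $f_*\mu_n\to f_*\mu$. Since $P(X)$ and $P(Y)$ are metrizable, sequential continuity gives continuity, and we are done.

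There is no real obstacle here — this is a folklore result, as the statement says — but the one point deserving a line of care is the justification of the change-of-variables identity for all bounded Borel functions (not merely indicators), together with the remark that compactness of $X$ is what lets us conclude $g\circ f$ is bounded so that weak convergence applies without any integrability caveat. One could alternatively argue directly with the L\'evy--Prokhorov metric using uniform continuity of $f$ on the compact space $X$ to control $\epsilon$-neighborhoods, but the test-function argument is cleaner and avoids estimating $f^{-1}(A^\epsilon)$ versus $(f^{-1}(A))^{\epsilon'}$.
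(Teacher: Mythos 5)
Your proposal is correct and follows essentially the same route as the paper: the paper's proof also reduces to weak (sequential) convergence and applies the change-of-variables identity $\int_Y g\,{\rm d}f_*(\mu)=\int_X g\circ f\,{\rm d}\mu$ to continuous test functions $g$ on $Y$. The extra care you take in justifying the identity and invoking metrizability is fine but not needed beyond what the paper records.
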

\begin{proof}
	Assume $\mu_n\Rightarrow\mu$ is a weakly convergent sequence of measures in $P(X)$. Then for every continuous function $g:Y\rightarrow\mathbb R$ it holds
\begin{align*}
	\int_Y g(y)\,{\rm d}f_*(\mu)(y)&=
	\int_X g\circ f(x)\,{\rm d}\mu(x)\\
	&=\lim_{n\rightarrow\infty} \int_X g\circ f(x)\,{\rm d}\mu_n(x)\\
	&=\lim_{n\rightarrow\infty} \int_Y g(y)\,{\rm d}f_*(\mu_n)(y)
\end{align*}
Hence $f_*(\mu_n)\Rightarrow f_*(\mu)$.
\end{proof}

\subsection{Sequences}	
	In this paper we denoted sequences by sans serif letters. In particular, we denote by $\seq{A}$ a sequence of structures	$\seq{A}=(\mathbf A_n)_{n\in\mathbb N}$, and by $\Xseq{X}=(X_n)_{n\in\mathbb N}$ a sequence of sets $X_n$, where $X_n$ is a subset of the domain $A_n$ of $\mathbf A_n$.
	
	Subsequences will by denoted by $\seq{A}_f$ and $\Xseq{X}_f$, where $f$ is meant to be a strictly increasing function $f:\mathbb N\rightarrow\mathbb N$, and represent the sequences $(\mathbf A_{f(n)})_{n\in\mathbb N}$ and $(X_{f(n)})_{n\in\mathbb N}$. Note that $(\seq{A}_f)_g=\seq{A}_{g\circ f}$.
	
	In order to simplify the notations, we extend binary relations and standard constructions to sequences by applying them component-wise. For instance
	$\Xseq{X}\subseteq \Xseq{Y}$ means $(\forall n\in\mathbb N)\ X_n\subseteq Y_n$, 
	$\Xseq{X}\cap\Xseq{Y}$ represents the sequence
	$(X_n\cap Y_n)_{n\in\mathbb N}$,
	and if $f:{\rm Rel}(\sigma)\rightarrow\mathbb R$ is a mapping then
	$f(\seq{A})$ represents the sequence $(f(\mathbf A_n))_{n\in\mathbb N}$.
	
We find these notations extremely helpful for our purposes.

\subsection{Basics of Structural Convergence}
Let $\sigma$ be a countable signature, let $X$ be a fragment of ${\rm FO}(\sigma)$.
For $\phi\in X$ with free variables within $x_1,\dots,x_p$ and $\mathbf A$, we denote by $\langle\phi,\mathbf A\rangle$ the probability that $\phi$ is satisfied in $\mathbf A$ for a random assignment of elements of $A$ to the free variables of $\phi$ (for an independent and uniform random choice of the assigned elements), that is:
\[
\langle\phi,\mathbf A\rangle=\frac{\bigl|\bigr\{\overline v\in A^p\mid \mathbf A\models\phi(\overline v)\bigr\}\bigr|}{|A|^p}.
\]
(Note that the presence of unused free variables does not change the value in the next equation.)
In the special case where $\phi$ is a sentence, we get
\[
\langle\phi,\mathbf A\rangle=\begin{cases}
1&\text{if }\mathbf A\models\phi,\\
0&\text{otherwise}	
\end{cases}
\]

Two $\sigma$-structures $\mathbf A$ and $\mathbf B$ are {\em $X$-equivalent}, what we denote by $\mathbf A\equiv_X\mathbf B$, if we have $\langle\phi,\mathbf A\rangle=\langle\phi,\mathbf B\rangle$ for every $\phi\in X$.
\begin{example}
Let ${\rm QF}^-$ be the fragment of quantifier-free formulas that do not use equality. 

\flushleft If a graph $G$ is obtained from a graph $H$ by blowing each vertex into $k$ vertices (i.e. if $G$ is the lexicographic product of $H$ by an edgeless graph of order $k$) then $G\equiv_{\rm QF^-}H$.

If $X$ is a fragment including ${\rm QF}$ or ${\rm FO}_0$, then $\equiv_X$ is trivial on finite relational structures.
\end{example}

The case of ${\rm FO}^{\rm local}_1$-equivalence is settled by the next proposition.
\begin{proposition}
For any two finite $\sigma$-structures $\mathbf A$ and $\mathbf B$ we have that 
$\mathbf A\equiv_{{\rm FO}_1^{\rm local}}\mathbf B$
if and only if there exists a finite $\sigma$-structure $\mathbf C$ and two positive integers $a$ and $b$ such that 
$\mathbf A$ is isomorphic to $a$ copies of $\mathbf C$ and $\mathbf B$ is isomorphic to $b$ copies of $\mathbf C$.
\end{proposition}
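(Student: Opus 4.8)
The plan is to prove both implications, the backward one being routine and the forward one carrying the content; throughout I write $c_{\mathbf A}(\mathbf D)$ for the number of connected components of $\mathbf A$ (in the Gaifman graph sense) that are isomorphic to $\mathbf D$.

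\emph{The direction ($\Leftarrow$).} Suppose $\mathbf A$ is $a$ disjoint copies of $\mathbf C$ and $\mathbf B$ is $b$ disjoint copies. Let $\phi(x_1)\in{\rm FO}_1^{\rm local}$ be $r$-local. Since the Gaifman graph of a disjoint union is the disjoint union of the Gaifman graphs, for a vertex $v$ lying in one of the copies of $\mathbf C$ inside $\mathbf A$ the $r$-ball around $v$ is contained in that copy and is isomorphic, as a pointed structure, to the $r$-ball in $\mathbf C$ around the corresponding vertex; by locality $\mathbf A\models\phi(v)$ iff $\mathbf C\models\phi$ at that vertex. Hence the number of $v\in A$ satisfying $\phi$ is exactly $a$ times the number of $w\in C$ satisfying $\phi$, so $\langle\phi,\mathbf A\rangle=\langle\phi,\mathbf C\rangle$, and likewise $\langle\phi,\mathbf B\rangle=\langle\phi,\mathbf C\rangle$; therefore $\mathbf A\equiv_{{\rm FO}_1^{\rm local}}\mathbf B$.

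\emph{The direction ($\Rightarrow$), reduction to component densities.} Assume $\mathbf A\equiv_{{\rm FO}_1^{\rm local}}\mathbf B$ and set $N=\max(|A|,|B|)$. For each finite connected pointed $\sigma$-structure $(\mathbf P,p)$ let $\tau_{(\mathbf P,p)}(x_1)$ be the standard local formula expressing that the $N$-ball around $x_1$ is isomorphic, as a pointed structure, to $(\mathbf P,p)$: this formula is in ${\rm FO}_1^{\rm local}$ because the $N$-ball is a region defined relative to $x_1$ and the isomorphism type of a finite structure is first-order axiomatizable. Because $N$ is at least the number of elements, in both $\mathbf A$ and $\mathbf B$ the $N$-ball of any vertex $v$ coincides with the connected component of $v$ (each vertex of that component is within Gaifman-distance $<N$ of $v$, and every relation-tuple of the structure has all its coordinates inside one component, so the induced substructure on a component is a disjoint summand). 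Hence $\langle\tau_{(\mathbf P,p)},\mathbf A\rangle$ is the fraction of $v\in A$ whose pointed component is isomorphic to $(\mathbf P,p)$, and the hypothesis equates it with the same fraction for $\mathbf B$. Fixing $\mathbf D$ connected and the base point $p$, and noting that each component isomorphic to $\mathbf D$ contributes exactly $|\mathrm{Aut}(\mathbf D)\cdot p|$ such vertices, we may cancel the orbit size to get
\[
\frac{c_{\mathbf A}(\mathbf D)}{|A|}=\frac{c_{\mathbf B}(\mathbf D)}{|B|}=:q_{\mathbf D}\in\mathbb Q_{\ge 0}
\qquad\text{for every finite connected }\mathbf D,
\]
with only finitely many $q_{\mathbf D}$ nonzero; let $\mathcal S$ be their (nonempty) support.

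\emph{The direction ($\Rightarrow$), construction of $\mathbf C$.} Write the $q_{\mathbf D}$, $\mathbf D\in\mathcal S$, over a common denominator as $q_{\mathbf D}=p_{\mathbf D}/m$ with $p_{\mathbf D}\in\mathbb Z_{>0}$, $m\in\mathbb Z_{>0}$ and $\gcd\{p_{\mathbf D}:\mathbf D\in\mathcal S\}=1$, and set $\mathbf C:=\bigsqcup_{\mathbf D\in\mathcal S}p_{\mathbf D}\cdot\mathbf D$. Each $c_{\mathbf A}(\mathbf D)=p_{\mathbf D}|A|/m$ is a non-negative integer, so choosing (B\'ezout) integers $e_{\mathbf D}$ with $\sum_{\mathbf D}e_{\mathbf D}p_{\mathbf D}=1$ yields $|A|/m=\sum_{\mathbf D}e_{\mathbf D}\,c_{\mathbf A}(\mathbf D)\in\mathbb Z$; hence $a:=|A|/m$ is a positive integer and $c_{\mathbf A}(\mathbf D)=a\,p_{\mathbf D}$ for all connected $\mathbf D$, i.e. $\mathbf A$ is $a$ disjoint copies of $\mathbf C$. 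Symmetrically $\mathbf B$ is $b:=|B|/m$ copies of $\mathbf C$, which is the assertion. The one delicate point is the locality bookkeeping in the forward direction — that ``the $N$-ball around $x_1$ is isomorphic to a fixed finite pointed structure'' is a genuine local formula, and that for radius exceeding the size of the structure this ball is the connected component as an induced substructure, not just as a vertex set; both are standard facts about Gaifman locality, after which the argument is elementary counting together with B\'ezout's identity.
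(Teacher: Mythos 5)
Your proof is correct and follows essentially the same route as the paper's: test with local formulas identifying the isomorphism type of the connected component of $x_1$, deduce that the normalized component counts $c_{\mathbf A}(\mathbf D)/|A|$ and $c_{\mathbf B}(\mathbf D)/|B|$ coincide for every connected $\mathbf D$, and assemble $\mathbf C$ from these counts divided by their gcd. Your write-up is in fact more complete than the paper's one-paragraph sketch --- you supply the ($\Leftarrow$) direction, the orbit-counting step relating pointed and unpointed component counts, and the B\'ezout argument for the integrality of $a=|A|/m$ --- but the underlying idea is identical.
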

\begin{proof}
Let $\mathbf F_1,\dots,\mathbf F_n,\dots$ be an enumeration of the finite $\sigma$-structures (up to isomorphism), and let
$\varphi_i(x)$ be a local formula expressing that the connected component of $x$ is isomorphic to $\mathbf F_i$ (i.e. that the ball of radius $|F_i|+1$ around $x$ is isomorphic to $F_i$). Then 
$\langle\varphi_i,\mathbf A\rangle$ is equal to the product of 
$|\mathbf F_i|/|\mathbf A|$ by the number of connected components of $\mathbf A$ isomorphic to $\mathbf F_i$. Thus there exists a positive integer $q$ and non-negative integers $p_1,\dots,p_n,\dots$ such that
$\langle\varphi_i,\mathbf A\rangle=p_i/q$ and the set  of all positive
$p_i$ values is setwise coprime. Then if $\mathbf C$ consists in thus union (over $i$) of $p_i$ copies of $\mathbf F_i$, it is immediate that $\mathbf A$ and $\mathbf B$ consists in a positive number of copies of $\mathbf C$.
\end{proof}

A sequence $\seq{A}$ of $\sigma$-structures is {\em $X$-convergent} if $\langle\phi,\seq{A}\rangle=(\langle\phi,\mathbf A_n\rangle)_{n\in\mathbb N}$ converges for each $\phi\in X$.
This provides a unifying to left and local convergence, as mentioned in the introduction: left convergence coincides with ${\rm QF}^-$-convergence and local convergence with ${\rm FO}^{\rm local}$-convergence (when restricted to graphs with bounded degrees).
The term of {\em structural convergence} covers the general notions of $X$-convergence.

The basic result of \cite{CMUC}, which is going to provide us a guideline for a proper generalization of local-global convergence is the representation theorem for structural limits in terms of probability measures. We adopt \cite{CMUC} to the setting of this paper.

\subsection{The Representation Theorem for Structural Limits}
\label{sec:rep}

For a countable signature $\sigma$ and a fragment $X$ of ${\rm FO}(\sigma)$ we denote by $S_X^\sigma$ the Stone dual of the Lindenbaum-Tarski $\mathcal B_\sigma^X$ of $X$, which is a compact Polish space. Recall that the points of $\mathcal B_\sigma^X$ are the maximal consistent subsets of $X$ (or equivalently the ultrafilters on $\mathcal B_\sigma^X$). The topology of $S_X^\sigma$ 
is generated by the base of the clopen subsets of $S_X^\sigma$, which are in bijection with the formulas in $X$ by 
$$\phi\mapsto K(\phi)=\bigl\{t\in S_X^\sigma\mid \phi\in t\bigr\}.$$

In the setting of this paper we work with metric (and, notably, pseudo-metric) spaces. First note that the 
 topology of $S_X^\sigma$ is metrizable by the several metrics, including the metrics we introduce now.

A {\em chain covering} of $X$ is an increasing sequence $\mathfrak X=(X_1,X_2,\dots)$ of finite sets (i.e. $X_1\subset X_2\subset \dots\subset X_n\subset\dots$) such that every formula in $X$ is logically equivalent to a formula in $\bigcup_{i\geq 1}X_i$.
The metric $\delta_{\mathfrak X}$ induced by $\mathfrak X$ on $S_\sigma^X$ is defined by

\begin{great}
\begin{equation}
\delta_{\mathfrak X}(t_1,t_2)=\inf\bigl\{1/n\mid (t_1\vartriangle t_2)\cap X_n=\emptyset\bigr\},
\end{equation}
\end{great}

where $t_1\vartriangle t_2$ stands for the symmetric difference of the sets $t_1$ and $t_2$.

First-order limits (shortly ${\rm FO}$-limits) and, more generally, $X$-limits can be uniquely represented by a probability measure $\mu$ on the Stone space $S$ dual to the Lindenbaum-Tarski algebra of the  formulas. This can be formulated as follows.

\begin{theorem}[\cite{CMUC}]
Let $\sigma$ be a countable signature,
	let $X$ be a fragment of ${\rm FO}(\sigma)$ closed under disjonction, conjunction and negation, let $\mathcal B^X_\sigma$ be the Lindenbaum-Tarski algebra of $X$, and let $S^X_\sigma$ be the Stone dual of $\mathcal B^X_\sigma$.
	
Then there is a map $T^X_\sigma$ from the space ${\rm Rel}(\sigma)$ of finite $\sigma$-structures to the space of $P(S_\sigma^X)$ of probability measures on the Stone space $S^X_\sigma$, such that for every $\mathbf A\in{\rm Rel}(\sigma)$ and every $\phi\in X$ we have
\begin{equation}
	\label{eq:rep}
	\langle\phi,\mathbf A\rangle=\int_{S_\sigma^X}\mathbf 1_\phi(t)\,{\rm d}\mu_{\mathbf A}(t),
\end{equation}
where $\mu_{\mathbf A}=T^X_\sigma(\mathbf A)$ and  $\mathbf 1_\phi$ is the indicator function of the clopen subset $K(\phi)$ of $S^X_\sigma$ dual to the formula $\phi\in X$ in Stone duality, i.e. 
\[
\mathbf 1_\phi(t)=\begin{cases}
	1&\text{if }\phi\in t\\
	0&\text{otherwise}.
\end{cases}
\]
Additionally, if the fragment $X$ includes ${\rm FO}_0$ or ${\rm QF}$ then the mapping $T_\sigma^X$ is one-to-one\footnote{Note that in \cite{CMUC} the condition on $X$ was erroneously omitted.}.

In this setting, a sequence $\seq{A}$ of finite
$\sigma$-structures is $X$-convergent if and only if the measures $T_\sigma^X(\mathbf{A}_n)$ converge weakly to some measure $\mu$. Then  for every first-order formula $\phi\in X$ we have
 \begin{equation}
 \label{eq:mu}
 \int_{S^X_\sigma} \mathbf 1_\phi(t)\,{\rm d}\mu(t)=
 \lim_{n\rightarrow\infty}\int_{S^X_\sigma} \mathbf 1_\phi(t)\,{\rm d}\mu_{\mathbf{A}_n}(t)=
 \lim_{n\rightarrow\infty} \langle\phi,\mathbf{A}_n\rangle,
 \end{equation}
 where $\mu_{\mathbf{A}_n}=T^X_\sigma(\mathbf A_n)$.

Assume that a subgroup $\Gamma$ of 
the group $\mathfrak S_\omega$ of permutations of $\mathbb N$  acts on the first-order formulas in $X$  by permuting the free variables. Then this action induces an action on $S_\sigma^X$, and the probability  measure $T^{X}_\sigma(\mathbf A)$ associated with a finite structure $\mathbf A$ is obviously $\Gamma$-invariant, thus so is the weak limit $\mu$ of a sequence $(T^{X}_\sigma(\mathbf A_n))_{n\in\mathbb N}$ of probability measures associated with the finite structures of an X-convergent sequence. It follows that the measure $\mu$ appearing in \eqref{eq:mu} has the property to be $\Gamma$-invariant.
\end{theorem}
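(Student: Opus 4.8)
The plan is to reconstruct the proof of the representation theorem in the pseudo-metric setting by transferring the known Stone-duality construction from \cite{CMUC} and then checking weak convergence carefully. First I would define the map $T_\sigma^X$ explicitly: given a finite $\sigma$-structure $\mathbf A$ with domain of size $m$ and a fragment $X$ with free variables among $x_1,\dots,x_p$, each assignment $\overline v\in A^p$ determines a complete type $\mathrm{tp}(\overline v)\in S_\sigma^X$ (the set of all $\phi\in X$ satisfied by $\overline v$ in $\mathbf A$), and I set $T_\sigma^X(\mathbf A)=\mu_{\mathbf A}$ to be the pushforward of the uniform measure on $A^p$ under $\overline v\mapsto\mathrm{tp}(\overline v)$. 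Strictly one should work with all variables $x_1,\dots,x_n,\dots$ and take a measure on the Stone dual of the full algebra, using a compatible family over the finite-variable fragments; I would phrase it via the inverse limit so that \eqref{eq:rep} holds simultaneously for every $\phi\in X$. Equation \eqref{eq:rep} is then immediate from the definition of the pushforward, since $\mathbf 1_\phi$ is the indicator of the clopen set $K(\phi)$ and $\mu_{\mathbf A}(K(\phi))=\langle\phi,\mathbf A\rangle$ by construction.

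Next I would address injectivity when $X\supseteq\mathrm{FO}_0$ or $X\supseteq\mathrm{QF}$. If $X$ contains $\mathrm{QF}$ then the values $\langle\phi,\mathbf A\rangle$ for quantifier-free $\phi$ already recover, for each relation symbol $R$ of arity $k$, the normalized count of tuples in $R^{\mathbf A}$ together with the equality pattern, and by an inclusion–exclusion / Möbius argument over the finite conjunctions of literals one recovers the isomorphism type of $\mathbf A$ up to the blow-up already excluded — and with equality available one recovers $\mathbf A$ exactly, so $\mu_{\mathbf A}=\mu_{\mathbf B}$ forces $\mathbf A\cong\mathbf B$. If instead $X\supseteq\mathrm{FO}_0$, every sentence $\theta$ with $\mathbf A\models\theta$ contributes $\langle\theta,\mathbf A\rangle=1$, and since finite structures are finitely axiomatizable up to isomorphism, $\mu_{\mathbf A}=\mu_{\mathbf B}$ again forces $\mathbf A\cong\mathbf B$. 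I would also point out why the footnoted hypothesis is needed: without it the blow-up example shows $T_\sigma^X$ need not be one-to-one.

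For the convergence characterization I would argue both directions through the metrizable weak topology on $P(S_\sigma^X)$. If $T_\sigma^X(\mathbf A_n)\Rightarrow\mu$ weakly, then since each $\mathbf 1_\phi$ is continuous (indeed clopen), $\langle\phi,\mathbf A_n\rangle=\int\mathbf 1_\phi\,{\rm d}\mu_{\mathbf A_n}\to\int\mathbf 1_\phi\,{\rm d}\mu$, giving \eqref{eq:mu} and $X$-convergence. Conversely, if $\seq A$ is $X$-convergent, then by compactness of $P(S_\sigma^X)$ the sequence $(\mu_{\mathbf A_n})$ has weakly convergent subsequences; any two such limits $\mu,\mu'$ agree on every clopen set $K(\phi)$, hence on the algebra generated by them, which is a base closed under finite Boolean operations generating the topology, so by a Dynkin/$\pi$–$\lambda$ argument $\mu=\mu'$; therefore the whole sequence converges. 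This uniqueness-of-subsequential-limit step, together with the fact that the clopen sets form a convergence-determining class for $P$ on a Stone space, is the main technical point to get right. Finally, for the $\Gamma$-invariance claim: a permutation $\gamma\in\Gamma$ of the free variables acts on assignments $A^p\to A^p$ by permuting coordinates, which preserves the uniform measure on $A^p$, hence the pushforward $\mu_{\mathbf A}$ is $\gamma$-invariant for every finite $\mathbf A$; since the $\Gamma$-action on $S_\sigma^X$ is by homeomorphisms, the set of $\Gamma$-invariant probability measures is weakly closed, so the weak limit $\mu$ in \eqref{eq:mu} is $\Gamma$-invariant as well.

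The step I expect to be the main obstacle is the bookkeeping needed to handle \emph{all} free variables at once: reconciling the finitely-many-variables measures $\mu_{\mathbf A}^{(p)}$ on $S_{\sigma}^{X\cap\mathrm{FO}_p}$ into a single measure on the Stone dual of the full Lindenbaum–Tarski algebra (an inverse limit of compact spaces), and verifying that weak convergence at every finite level is equivalent to weak convergence of the inverse-limit measures. This is routine but must be stated precisely so that \eqref{eq:rep}, injectivity, and the convergence equivalence all hold for the genuinely infinite-variable fragment $X$, not merely for each $X\cap\mathrm{FO}_p$.
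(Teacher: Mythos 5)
Your proposal is correct, and it reconstructs exactly the standard argument behind this theorem: the paper itself gives no proof here, since the statement is imported verbatim from the cited reference, where $T^X_\sigma(\mathbf A)$ is defined precisely as you do, as the pushforward of the uniform measure on assignments to complete types, with the representation identity, injectivity, the clopen-sets-determine-weak-convergence argument (Stone--Weierstrass or $\pi$--$\lambda$), and the invariance of the closed set of $\Gamma$-invariant measures all as you describe. The only point worth tightening is the injectivity step for $X\supseteq{\rm FO}_0$ when $\sigma$ is infinite: a finite structure is then not axiomatized by a single sentence, but agreement on all sentences still forces isomorphism of every finite reduct, and a decreasing family of nonempty finite sets of bijections $A\to B$ has nonempty intersection, so the conclusion survives.
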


 This theorem generalizes the representation of the limit of a left-convergent sequence of graphs by an infinite exchangeable random graph \cite{Aldous1981, Hoover1979} and the representation of the limit of a local-convergent sequence of bounded degree graphs by a unimodular distribution \cite{Benjamini2001}.
 Figure~\ref{fig:spacesFO} schematically depicts some of the notions related to the representation theorem.

\begin{figure}[ht]
	\begin{center}
		\includegraphics[width=\textwidth]{}
	\end{center}
	\caption{Sketch of the spaces involved in the distributional representation of structural limits of $\sigma$-structures.}
	\label{fig:spacesFO}
\end{figure}

The weak topology of $P(S_\sigma^X)$ is metrizable by using the L\'evy-Prokhorov metrics based on the metrics $\delta_{\mathfrak X}$ (where $\mathfrak X$ is a fixed chain covering of $X$).
Using the fact that $\delta_{\mathfrak X}$ is an ultrametric, we obtain the following more practical expression for the L\'evy-Prokhorov metric ${\rm d}_{\mathfrak X}^{\rm LP}$ associated to $\delta_{\mathfrak X}$:
\begin{great}
\begin{equation}
{\rm d}_{\mathfrak X}^{\rm LP}(\mu_1,\mu_2)=
\inf_{n\in\mathbb N}\biggl\{\max\Bigl(1/n,
	\max_{\phi\in X_n}\Bigl|\int_{S_\sigma^X}\mathbf 1_\phi(t)\,{\rm d}(\mu_1-\mu_2)(t)\Bigr|\Bigr)\biggr\}.
\end{equation}
\end{great}

This metric in turn uniquely defines a pseudometric ${\rm dist}_{\mathfrak X}$ on ${\rm Rel}(\sigma)$  such  that the mapping $T_\sigma^X$ induces an isometric embedding of ${\rm Rel}(\sigma)/\!\equiv_X$ into $P(S_\sigma^X)$:

\[
	{\rm dist}_{\mathfrak X}(\mathbf A,\mathbf B)={\rm d}_{\mathfrak X}^{\rm LP}(T_\sigma^X(\mathbf A),T_\sigma^X(\mathbf B)).
\]

Note that we have the following expression for ${\rm dist}_{\mathfrak X}$:
\begin{great}
\begin{equation}
	{\rm dist}_{\mathfrak X}(\mathbf A,\mathbf B)=\inf_{n\in\mathbb N}\Bigl\{\max\bigl(1/n,\max_{\phi\in X_n}|\langle\phi,\mathbf A\rangle-\langle\phi,\mathbf B\rangle|\bigr)\Bigr\}.
\end{equation}
\end{great}

It is easily checked that, as expected, a sequence $\seq{A}$ is $X$-convergent if and only if it is Cauchy for ${\rm dist}_{\mathfrak X}$.
 
We denote by $\mathscr M^X_\sigma\subseteq P(S^X_\sigma)$ the space of probability measures on $S^X_\sigma$ associated to finite $\sigma$-structures:
\[
\mathscr M^X_\sigma=\{T^X_{\sigma}(\mathbf A)\mid \mathbf A\in{\rm Rel}(\sigma)\}.
\]
We denote by $\overline{\mathscr M^X_\sigma}$ the weak closure of $\mathscr M^X_\sigma$ in $P(S^X_\sigma)$ and   by $(\overline{{\rm Rel}(\sigma)}^X,{\rm dist}_{\mathfrak X})$ the completion of
the pseudometric space $({\rm Rel}(\sigma),{\rm dist}_{\mathfrak X})$. Note that $(\overline{{\rm Rel}(\sigma)}^X,{\rm dist}_{\mathfrak X})$ has a dense subspace naturally identified with $({\rm Rel}(\sigma)/\!\!\equiv_X,{\rm dist}_{\mathfrak X})$, and $T_\sigma^X$ induces an isometric isomorphism of $(\overline{{\rm Rel}(\sigma)}^X,{\rm dist}_{\mathfrak X})$ and $(\overline{\mathscr M^X_\sigma}, {\rm d}^{\rm LP}_{\mathfrak X})$. Consequently both spaces are separable compact metric spaces.

\section{From interpretation to lift convergence}

Our basic approach to local-global convergence is by means of lifts of structures, which demands a change of signature. By doing so we still have to preserve some functorial properties and this is done by means of interpretations.

\subsection{A Categorical Approach to Interpretations}

Interpretations of classes of relational structures in other classes of relational structures are a useful and powerful technique to transfer properties from one class of structures to another (with possibly a different signature).

First, we define interpretations syntactically (in the spirit of \cite{Lascar2009}), which allows us to organize them as a category. This functorial view will be particularly useful in our setting.

Let $\tau,\sigma$ be countable relationa	l signatures. An {\em interpretation} $\mathsf I$ of $\sigma$-structures in $\tau$-structures is a triple 
$(\nu,\eta,(\rho_R)_{R\in \sigma})$, where:
\begin{itemize}
	\item $\nu(\overline x)\in {\rm FO}(\tau)$ is a formula defined on $p$ tuples of variables $\overline x$;
	\item $\eta(\overline x,\overline y)\in {\rm FO}(\tau)$ is a formula defining an equivalence relation on $p$-tuples (satisfying $\nu$);
	\item for each relation $R\in\sigma$ of arity $k$, the formula
	$\rho_R(\overline x_1,\dots,\overline x_k)\in {\rm FO}(\tau)$ (with $|\overline x_1|=\dots=|\overline x_k|=p$) is compatible with $\eta$, meaning  $$\bigwedge_{i=1}^k\eta(\overline x_i,\overline y_i)\vdash\rho_R(\overline x_1,\dots,\overline x_k)\leftrightarrow \rho_R(\overline y_1,\dots,\overline y_k).$$
\end{itemize}

By replacing equality by $\eta$, relation $R$ by $\rho_R$ and by conditioning quantifications using $\nu$ one easily checks that 
the interpretation $\mathsf I$ allows to associate
to each formula $\phi(x_1,\dots,x_k)\in {\rm FO}(\sigma)$ a formula $\hat\phi(\overline x_1,\dots,\overline x_k)\in{\rm FO}(\tau)$. 
We define
\[L(\mathsf I):{\rm FO}(\sigma)\rightarrow{\rm FO}(\tau)\] as the mapping $\phi\mapsto\hat\phi$.

Note that we have $L(\mathsf I)(1)=\nu,L(\mathsf I)(x_1=x_2)=\eta$, and $L(\mathsf I)(R)=\rho_R$ for every $R\in\sigma$. Hence
$L(\mathsf I)$ fully determines $\mathsf I$.

This definition allows us to consider interpretations $\mathsf I:\tau\rightarrow\sigma$ as morphisms in a {\em category of interpretations}. The objects of this category are all countable relational signatures (here denoted by $\sigma,\tau$, \dots ) and morphisms $\mathsf I:\tau\rightarrow\sigma$ are triples $(\nu,\eta,(\rho_R)_{R\in \sigma})$ forming an interpretation as above. Morphisms compose as if $\mathsf I:\tau\rightarrow\sigma$ and $\mathsf J:\sigma\rightarrow\kappa$ are interpretations then we can define
\[\mathsf J\circ\mathsf I=(L(\mathsf J\circ\mathsf I)(x_1=x_1),L(\mathsf J\circ\mathsf I)(x_1=x_2),(L(\mathsf J\circ\mathsf I)(S))_{S\in\kappa}).\]

The identity (for $\sigma$) is provided by the morphism
$(x_1=x_1,x_1=x_2,(R)_{R\in\sigma})$. Thus we indeed have a category of interpretations.

A {\em basic interpretation} \cite{modeling} is an interpretation $(\nu,\eta,(\rho_R)_{R\in \sigma})$ such that $\nu(\overline x):=(\overline x=\overline x)$ and $\eta(\overline x_1,\overline x_2):=(\overline x_1=\overline x_2)$. (For instance the identity interpretation defined above is a basic interpretation.)

Note that every basic interpretation $\mathsf I:\tau\rightarrow\sigma$ induces 
a homomorphism 
\[H(\mathsf I):\mathcal B_\sigma\rightarrow\mathcal B_\tau\qquad\text{defined by}\qquad
H(\mathsf I)([\phi])=[L(\mathsf I)(\phi)],\]
 where $[\phi]$ denotes the class of $\phi$ for logical equivalence.
The mapping $H$ is actually a contravariant functor from the category of interpretations to the category of Boolean algebras.

By Stone duality theorem, the interpretation $\mathsf I$ also defines a  continuous function 
\[F(\mathsf I):S_\tau\rightarrow S_\sigma\qquad\text{defined by}\qquad
F(\mathsf I)(t)=\{\phi\mid L(\mathsf I)(\phi)\in t\}.
\]
 Note that $F$ is a covariant functor from the category of interpretations to the category of Stone spaces.

Finally, the interpretation $\mathsf I$ also defines a mapping
\[P(\mathsf I):{\mathscr R}\rm{el}(\tau)\rightarrow {\mathscr R}\rm{el}(\sigma)\] as follows:
\begin{itemize}
	\item The domain of $P(\mathsf I)(\mathbf A)$ is $\nu(\mathbf A)/\eta$, that is all the $\eta$-equivalence classes of $p$-tuples in $\nu(\mathbf A)$.
	\item For every relational symbol $R\in\sigma$ with arity $k$ (and associated formula $\rho_R$) we have
	$$P(\mathsf I)(\mathbf A)\models R([\overline v_1],\dots,[\overline v_k])\quad\text{iff}\quad \mathbf A \models\rho_R(\overline v_1,\dots,\overline v_k).$$
	(Note that this does not depend on the choice of the representatives $\overline v_1,\dots,\overline v_k$ of the $\eta$-equivalence classes $[\overline v_1],\dots,[\overline v_k]$.)
\end{itemize}
This mapping $P(\mathsf I):{\rm Rel}(\tau)\rightarrow{\rm Rel}(\sigma)$ is what is usually meant by an interpretation (of $\sigma$-structures in $\tau$-structures, see \cite{Hodges1997}).
It is easily checked that the mapping $p5\mathsf I)$ has the property that 
for every formula $\phi\in {\rm FO}(\sigma)$ with $k$ free variables and every $[\overline v_1],\dots,[\overline v_k]$ in $P(\mathsf I)(\mathbf A)$ we have
\[
P(\mathsf I)(\mathbf A)\models \phi([\overline v_1],\dots,[\overline v_k])\quad\text{iff}\quad \mathbf A\models L(\mathsf I)(\phi)(\overline v_1,\dots,\overline v_k).
\]

The interpretations, which we shall the most frequently consider, belong to the following types of basic interpretations (which are easily checked to be basic interpretations):
\begin{itemize}
	\item 
 {\em forgetful interpretations} that simply forget some of the relations, 
 \item
{\em renaming interpretations} that bijectively map a signature to another, mapping a relational symbol to a relational symbol with same arity,
\item {\em projecting interpretations} that forget some symbols and rename others.
\end{itemize}

Our categorical approach allows us to obtain a more functorial point of view:

$$\xy\xymatrix{
\tau\ar[d]_{\mathsf I}&{\rm FO}(\tau)&\mathcal B_\tau\ar@{<~>}^{\text{Stone duality}}[rr]&&S_\tau\ar[d]^{F(\mathsf I)}&{\rm Rel}(\tau)\ar@{~>}[d]_{P(\mathsf I)}\ar@{~>}[r]^{T_\tau}&\mathcal M_\tau\ar@{ (->}[r]&P(S_\tau)\ar@{~>}[d]^{F(\mathsf I)_*}\\
\sigma&{\rm FO}(\sigma)\ar[u]^{L(\mathsf I)}&\mathcal B_\sigma\ar[u]^{H(\mathsf I)}\ar@{<~>}^{\text{Stone duality}}[rr]&&S_\sigma&{\rm Rel}(\sigma)\ar@{~>}[r]^{T_\sigma}&\mathcal M_\sigma\ar@{ (->}[r]&P(S_\sigma)
}\endxy
$$
In this diagram the mapping $F(\mathsf I)_*$ is the pushforward defined by $F(\mathsf I)$ (see Lemma~\ref{lem:contpush} bellow).

One can also consider the case where we do not consider all first-order formulas. Let $X$ be a fragment of ${\rm FO}(\tau)$ and let $Y=L(\mathsf I)^{-1}(X)$.
(Note that if $X$ is closed by disjunction, conjunction and negation, so is $L(\mathsf I)^{-1}(X)$.)
 The basic interpretation $\mathsf I$ then defines a homomorphism
 \[H(\mathsf I):\mathcal B_\sigma^Y\rightarrow\mathcal B_\tau^X,\]
  which is the restriction of $H(\mathsf I)$ to $\mathcal B_\sigma^Y$. By duality, this homomorphism defines a continuous mapping 
  \[\hat F(\mathsf I):S_\tau^X\rightarrow S_\sigma^Y.\]
   In particular, if $K_\phi$ is the clopen subset of $S_\sigma^Y$ defined by $\phi\in Y$ then $\hat F(\mathsf I)^{-1}(K_\phi)$ is the clopen subset $K_{L(\mathsf I)(\phi)}$ of $S_\tau^X$. Note that we have $F(\mathsf I)=\hat F(\mathsf I)\circ\Pi_\tau^X$, where $\Pi_\tau^X$ is the natural projection from $S_\tau$ to $S_\tau^X$.

\subsection{Metric properties of interpretations}
\label{sec:met}
We have seen in the previous section that interpretations define continuous functions between Stone spaces. This property can be used to transfer convergence from one signature to another. This is done in a very general setting we introduce now.

Let $\mathsf I:\tau\rightarrow\sigma$ be a basic interpretation,
let $X$ be a fragment of ${\rm FO}(\tau)$, let $Y=L(\mathsf I)^{-1}(X)$, let $\mathfrak X$ be a chain coverings of $X$, and let $\mathfrak Y$ be a chain covering of $Y$ such that every formula in $L(\mathsf I)(Y_n)$ is logically equivalent to a formula in $X_n$.

Let us explain this choice of $Y$.

 In ${\rm Im}\,P(\mathsf I)$ we should not distinguish two finite  structures $P(\mathsf I)(\mathbf A)$ and $P(\mathsf I)(\mathbf B)$ if there exist a chain $\mathbf C_1,\dots,\mathbf C_{2n+1}$ of finite structures  such that $\mathbf C_1=\mathbf A$, $\mathbf C_{2n+1}=\mathbf B$,
$C_{2i-1}\equiv_X \mathbf C_{2i}$, and $P(\mathsf I)(\mathbf C_{2i})=P(\mathsf I)(\mathbf C_{2i+1})$ for $i=1,\dots,n$.
But $P(\mathsf I)(\mathbf A')=P(\mathsf I)(\mathbf B')$ holds if and only if $\langle\phi,\mathbf A'\rangle=\langle\phi,\mathbf B'\rangle$ for every $\phi\in L(\mathsf I)({\rm FO}(\sigma))$. Hence the conditions can be rewritten as 
\begin{align*}
\forall\phi&\in X&\langle\phi,\mathbf C_{2i-1}\rangle&=\langle\phi,\mathbf C_{2i}\rangle	\\
\forall\phi&\in L(\mathsf I)({\rm FO}(\sigma))&\langle\phi,\mathbf C_{2i}\rangle&=\langle\phi,\mathbf C_{2i+1}\rangle	
\intertext{A necessary (but maybe not sufficient) condition is obviously that}
\forall\phi&\in X\cap L(\mathsf I)({\rm FO}(\sigma))&
\langle\phi,\mathbf A\rangle&=\langle\phi,\mathbf B\rangle.
\\
\intertext{that is:}
\forall\phi&\in L(\mathsf I)^{-1}(X)&
\langle\phi,P(\mathsf I)(\mathbf A)\rangle&=\langle\phi,P(\mathsf I)(\mathbf B)\rangle,
\end{align*}
which we can rewrite as 
$P(\mathsf I)(\mathbf A)\equiv_Y P(\mathsf I)(\mathbf B)$.
This shows that the fragment $Y$ is sufficiently small to ensure the continuity of $P(\mathsf I)$. By our choice of the chain covering $\mathfrak Y$ we further get that $P(\mathsf I)$ induces a {\em short map} (that is a $1$-Lipschitz function). We summarize this in the following lemma.

\begin{lemma}
\label{lem:sm}
In the above setting and notation we have:
	\begin{great}
	\begin{equation}
		{\rm dist}_{\mathfrak Y}(P(\mathsf I)(\mathbf A),P(\mathsf I)(\mathbf B))\leq {\rm dist}_{\mathfrak X}(\mathbf A,\mathbf B).
	\end{equation}
\end{great}

This can be restated as follows:
Let $({\rm Rel}(\tau)/\!\equiv_X,{\rm dist}_{\mathfrak X})$ and $({\rm Rel}(\sigma)/\!\equiv_Y,{\rm dist}_{\mathfrak Y})$ be the quotient metric spaces induced by the pseudometric spaces $({\rm Rel}(\tau),{\rm dist}_{\mathfrak X})$ and $({\rm Rel}(\sigma),{\rm dist}_{\mathfrak Y})$.
Then the unique continuous function $\hat P(\mathsf I):({\rm Rel}(\tau)/\!\equiv_X,{\rm dist}_{\mathfrak X})\rightarrow ({\rm Rel}(\sigma)/\!\equiv_Y,{\rm dist}_{\mathfrak Y})$ such that $\hat P(\mathsf I)([\mathbf A]_X)=[P(\mathsf I)(\mathbf A)]_Y$ is a short map. 
\end{lemma}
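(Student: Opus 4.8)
The plan is to unwind the definition of ${\rm dist}_{\mathfrak X}$ and ${\rm dist}_{\mathfrak Y}$ in terms of Stone pairings and reduce the inequality to a statement about individual test formulas. Concretely, using the expression
\[
{\rm dist}_{\mathfrak Y}(P(\mathsf I)(\mathbf A),P(\mathsf I)(\mathbf B))=\inf_{n\in\mathbb N}\Bigl\{\max\bigl(1/n,\ \max_{\psi\in Y_n}|\langle\psi,P(\mathsf I)(\mathbf A)\rangle-\langle\psi,P(\mathsf I)(\mathbf B)\rangle|\bigr)\Bigr\},
\]
the first step is to rewrite each term $\langle\psi,P(\mathsf I)(\mathbf A)\rangle$ for $\psi\in Y=L(\mathsf I)^{-1}(X)$ as a Stone pairing over $\mathbf A$. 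Here one must be slightly careful: the transfer identity stated in the excerpt, $P(\mathsf I)(\mathbf A)\models\phi(\overline v)\iff \mathbf A\models L(\mathsf I)(\phi)(\overline v)$, gives a bijective correspondence between satisfying assignments of $\psi$ in $P(\mathsf I)(\mathbf A)$ and satisfying $p$-tuples of $L(\mathsf I)(\psi)$ in $\mathbf A$, but the domain of $P(\mathsf I)(\mathbf A)$ consists of $\eta$-classes of $p$-tuples from $\nu(\mathbf A)$. Since $\mathsf I$ is a \emph{basic} interpretation, $\nu(\overline x)=(\overline x=\overline x)$ is always true and $\eta$ is genuine equality, so the domain of $P(\mathsf I)(\mathbf A)$ is just $A$ itself and $|P(\mathsf I)(\mathbf A)|=|A|$; hence $\langle\psi,P(\mathsf I)(\mathbf A)\rangle=\langle L(\mathsf I)(\psi),\mathbf A\rangle$ exactly, with no normalization discrepancy. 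This is the place where the hypothesis ``basic'' is used, and it is the only subtlety; the rest is bookkeeping.

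Having made this identification, for every $\psi\in Y_n$ we have
\[
|\langle\psi,P(\mathsf I)(\mathbf A)\rangle-\langle\psi,P(\mathsf I)(\mathbf B)\rangle|
=|\langle L(\mathsf I)(\psi),\mathbf A\rangle-\langle L(\mathsf I)(\psi),\mathbf B\rangle|.
\]
By the compatibility assumption on the chain coverings, every formula in $L(\mathsf I)(Y_n)$ is logically equivalent to some $\phi\in X_n$, and logically equivalent formulas have the same Stone pairing in every finite structure; therefore the right-hand side equals $|\langle\phi,\mathbf A\rangle-\langle\phi,\mathbf B\rangle|$ for some $\phi\in X_n$, and in particular is bounded by $\max_{\phi\in X_n}|\langle\phi,\mathbf A\rangle-\langle\phi,\mathbf B\rangle|$. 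Taking the max over $\psi\in Y_n$ yields
\[
\max_{\psi\in Y_n}|\langle\psi,P(\mathsf I)(\mathbf A)\rangle-\langle\psi,P(\mathsf I)(\mathbf B)\rangle|\le \max_{\phi\in X_n}|\langle\phi,\mathbf A\rangle-\langle\phi,\mathbf B\rangle|,
\]
hence $\max(1/n,\ \cdot\ )\le\max(1/n,\ \cdot\ )$ termwise in $n$, and taking the infimum over $n$ on both sides gives exactly ${\rm dist}_{\mathfrak Y}(P(\mathsf I)(\mathbf A),P(\mathsf I)(\mathbf B))\le{\rm dist}_{\mathfrak X}(\mathbf A,\mathbf B)$.

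The ``restated'' form follows formally: the displayed inequality says precisely that $\mathbf A\mapsto [P(\mathsf I)(\mathbf A)]_Y$ is $1$-Lipschitz from $({\rm Rel}(\tau),{\rm dist}_{\mathfrak X})$ to $({\rm Rel}(\sigma)/\!\equiv_Y,{\rm dist}_{\mathfrak Y})$; since ${\rm dist}_{\mathfrak Y}(P(\mathsf I)(\mathbf A),P(\mathsf I)(\mathbf B))=0$ whenever ${\rm dist}_{\mathfrak X}(\mathbf A,\mathbf B)=0$, the map factors through the quotient ${\rm Rel}(\tau)/\!\equiv_X$, and the induced map $\hat P(\mathsf I)$ on the quotient metric spaces is well-defined and remains $1$-Lipschitz (uniqueness of $\hat P(\mathsf I)$ is immediate since $\equiv_X$-classes are the points of the quotient). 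I expect no real obstacle here; the only point demanding care is the normalization check in the first paragraph, i.e.\ confirming that ``basic'' forces $|P(\mathsf I)(\mathbf A)|=|A|$ so that the Stone pairings match on the nose rather than merely up to a bounded factor — if $\mathsf I$ were a general interpretation one would need a separate argument (or a weaker conclusion) there.
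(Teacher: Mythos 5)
Your proposal is correct and follows essentially the same route as the paper: unwind both pseudometrics via the Stone-pairing formulas, use the transfer identity $\langle\psi,P(\mathsf I)(\mathbf A)\rangle=\langle L(\mathsf I)(\psi),\mathbf A\rangle$ for the basic interpretation, and bound the max over $L(\mathsf I)(Y_n)$ by the max over $X_n$ via the compatibility of the chain coverings. Your explicit normalization check (that ``basic'' makes the domains and hence the Stone pairings match exactly) is a point the paper leaves implicit, but it is the same argument.
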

\begin{proof}
For every pair $\mathbf A,\mathbf B$ au $\tau$-structures we have
\begin{align*}
	{\rm dist}_{\mathfrak Y}(P(\mathsf I)(\mathbf A),P(\mathsf I)(\mathbf B))&=\inf_{n\in\mathbb N}\Bigl\{\max\bigl(1/n,\max_{\phi\in Y_n}|\langle\phi,P(\mathsf I)(\mathbf A)\rangle-\langle\phi,P(\mathsf I)(\mathbf A)\rangle|\bigr)\Bigr\}\\
	&=\inf_{n\in\mathbb N}\Bigl\{\max\bigl(1/n,\max_{\psi\in L(\mathsf I)(Y_n)}|\langle\phi,\mathbf A\rangle-\langle\phi,\mathbf B\rangle|\bigr)\Bigr\}\\
	&\leq \inf_{n\in\mathbb N}\Bigl\{\max\bigl(1/n,\max_{\psi\in X_n}|\langle\phi,\mathbf A\rangle-\langle\phi,\mathbf B\rangle|\bigr)\Bigr\}\\
	&={\rm dist}_{\mathfrak X}(\mathbf A,\mathbf B).
	\end{align*}
(In particular $\mathbf A\equiv_X\mathbf B$ imply that ${\rm dist}_{\mathfrak Y}(P(\mathsf I)(\mathbf A),P(\mathsf I)(\mathbf B))=0$ thus
$P(\mathsf I)(\mathbf A)\equiv_Y P(\mathsf I)(\mathbf B)$ hence $P(\mathsf I)$ descends to the quotient and there exists a unique map 
\[\hat P(\mathsf I):({\rm Rel}(\tau)/\!\equiv_X,{\rm dist}_{\mathfrak X})\rightarrow ({\rm Rel}(\sigma)/\!\equiv_Y,{\rm dist}_{\mathfrak Y})\]
 such that $\hat P(\mathsf I)([\mathbf A]_X)=[P(\mathsf I)(\mathbf A)]_Y$.)
\end{proof}

Let $\mathscr M_{X,\mathsf I}=F(\mathsf I)_*({\mathscr M_\tau^X})$ and let $\overline{\mathscr M_{X,\mathsf I}}=F(\mathsf I)_*(\overline{\mathscr M_\tau^X})$ be the closure of $\mathscr M_{X,\mathsf I}$ in $(P(S_\sigma^Y),{\rm d}_{\mathfrak Y}^{\rm LP})$.
We tried to summarize in Fig.~\ref{fig:diag} the relations between the different (pseudo)metric spaces defined from signatures, fragments, and interpretations. 

\begin{figure}[ht]
$$
\xy\xymatrix@C=.025\textwidth{
({\rm Rel}(\tau),{\rm dist}_{\mathfrak X})\ar@{->>}[rr]^-{T_\tau^X}\ar@{->>}[dd]^{P(\mathsf I)}\ar@{->>}[dr]^{[\cdot]_X}
&&({\mathscr M^X_\tau},{\rm d}_{\mathfrak X}^{\rm LP})\ar@{ >->}[rr]\ar@{->>}'[d]_-{F(\mathsf I)_*}[dd]
&&(\overline{\mathscr M^X_\tau},{\rm d}_{\mathfrak X}^{\rm LP})\ar@{->>}_-{F(\mathsf I)_*}[dd]
\\
&({\rm Rel}(\tau)/\!\equiv_X,{\rm dist}_{\mathfrak X})\ar@{ >->>}[ur]\ar@{->>}[dd]_(0.3){\hat P(\mathsf I)}\ar@{ >->}[rr]
&&(\overline{{\rm Rel}(\tau)}^X\!\!,{\rm dist}_{\mathfrak X})\ar@{ >->>}[ur]\ar@{->>}[dd]
&&&\\
({\rm Im}\,P(\mathsf I),{\rm dist}_{\mathfrak Y})\ar@{->>}'[r][rr]^(.3){T_\sigma^Y}\ar@{ >->}[dd]\ar@{->>}[dr]^{[\cdot]_Y}
&&(\mathscr M_{X,\mathsf I},{\rm d}_{\mathfrak Y}^{\rm LP})\ar@{ >->}'[d][dd]\ar@{ >->}'[r][rr]
&&(\overline{\mathscr M_{X,\mathsf I}},{\rm d}_{\mathfrak Y}^{\rm LP})\ar@{ >->}[dd]
\\
&({\rm Im}\,P(\mathsf I)/\!\equiv_Y,{\rm dist}_{\mathfrak Y})\ar@{ >->>}[ur]\ar@{ >->}[dd]\ar@{ >->}[rr]
&&(\overline{{\rm Im}\,P(\mathsf I)}^Y\!\!,{\rm dist}_{\mathfrak Y})\ar@{ >->>}[ur]\ar@{->>}[dd]
&&&\\
({\rm Rel}(\sigma),{\rm dist}_{\mathfrak Y})\ar@{->>}'[r][rr]^(.3){T_\sigma^Y}\ar@{->>}[dr]^{[\cdot]_Y}
&&({\mathscr M_\sigma^Y},{\rm d}_{\mathfrak Y}^{\rm LP})\ar@{ >->}'[r][rr]
&&(\overline{\mathscr M_\sigma^Y},{\rm d}_{\mathfrak Y}^{\rm LP})\\
&({\rm Rel}(\sigma)/\!\equiv_Y,{\rm dist}_{\mathfrak Y})\ar@{ >->>}[ur]\ar@{ >->}[rr]
&&(\overline{{\rm Rel}(\sigma)}^Y\!\!,{\rm dist}_{\mathfrak Y})\ar@{ >->>}[ur]
&&&\\
}\endxy
$$
\caption{The considered (pseudo)metric spaces and their relations. Unlabeled arrows correspond to inclusions ($\protect\xymatrix@C=20pt{*{}\protect\ar@{ >->}[r]&*{}}\ $) or isometric embeddings ($\protect\xymatrix@C=20pt{*{}\protect\ar@{ >->>}[r]&*{}}\ $). In this diagram the space $(\overline{{\rm Im}\,P(\mathsf I)}^Y\!\!,{\rm dist}_{\mathfrak Y})$ is the completion of the pseudometric space $({\rm Im}\,P(\mathsf I),{\rm dist}_{\mathfrak Y})$.}
\label{fig:diag}
\end{figure}

\subsection{Lift-Hausdorff convergence}
\label{sec:LHC}
We now show how all the above constructions 
 will 
 nicely fit in Definition~\ref{def:lHc} of the lift-Hausdorff convergence. We first show how the definition derives from the preceding notions dealing with general basic interpretations.

Let $\mathsf I:\tau\rightarrow\sigma$ be a fixed interpretation, let $X$ be a fixed fragment of ${\rm FO}(\tau)$, let $\mathfrak X$ be a fixed cover chain of $X$, and let $\mathfrak Y$ be any chain covering of $Y$ such that every formula in $L(\mathsf I)(Y_n)$ is logically equivalent to a formula in $X_n$.

According to Lemma~\ref{lem:contpush} the pushforward mapping $F(\mathsf I)_*$ is a continuous function from $P(S^X_\tau)$ to $P(S_\sigma^Y)$.
Then the L\'evy-Prokhorov distance ${\rm d}_{\mathfrak X}^{\rm LP}$ on $\overline{\mathscr M^X_\tau}$ defines a Hausdorff distance ${\rm d}_{\mathfrak X}^{\rm H}$ on the space $\mathscr C_{\overline{\mathscr M^X_\tau}}$ of non-empty closed subsets of $\overline{\mathscr M^X_\tau}$:
\begin{great}
	\begin{equation}
		{\rm d}_{\mathfrak X}^{\rm H}(M_1,M_2)=\max\Bigl(
		\sup_{\mu_1\in M_1}\inf_{\mu_2\in M_2}{\rm d}_{\mathfrak X}^{\rm LP}(\mu_1,\mu_2),\sup_{\mu_2\in M_2}\inf_{\mu_1\in M_1}{\rm d}_{\mathfrak X}^{\rm LP}(\mu_1,\mu_2)
		\Bigr).
	\end{equation}
\end{great}
 
$$
\xy\xymatrix@C=.1\textwidth{
&(\overline{\mathscr M^X_\tau},{\rm d}_{\mathfrak X}^{\rm LP})\ar@{->>}_-{F(\mathsf I)_*}[d]
&(\mathscr C_{\overline{\mathscr M^X_\tau}},{\rm d}_{\mathfrak X}^{\rm H})\\
&\overline{\mathscr M_{X,\mathsf I}},{\rm d}_{\mathfrak Y}^{\rm LP})\ar@{ >->}[d]\ar@{ >->}^-{\iota}[r]\ar@{ >->}^-{F(\mathsf I)_*^{-1}}[ur]
&(\mathscr C_{\mathscr M_{X,\mathsf I}},{\rm d}_{\mathfrak Y}^{\rm H})\ar@{ >->}_-{\widehat{F(\mathsf I)_*}^{-1}}[u]\\
&(\overline{\mathscr M_\sigma^Y},{\rm d}_{\mathfrak Y}^{\rm LP})\\
}\endxy
$$

Also the pseudometric ${\rm dist}_{\mathfrak X}$ on ${\rm Rel}(\tau)$ defines a Hausdorff pseudometric ${\rm dist}_{\mathfrak X}^{\rm H}$ on the space of non-empty closed subsets of ${\rm Rel}(\tau)$ (for the topology induced by the pseudometric ${\rm dist}_{\mathfrak X}$):

\begin{great}
	\begin{equation}
		{\rm dist}_{\mathfrak X}^{\rm H}(F_1,F_2)=\max\Bigl(
		\sup_{\mathbf A_1\in F_1}\inf_{\mathbf A_2\in F_2}{\rm dist}_{\mathfrak X}(\mathbf A_1,\mathbf A_2),\sup_{\mathbf A_2\in F_2}\inf_{\mathbf A_1\in F_1}{\rm dist}_{\mathfrak X}(\mathbf A_1,\mathbf A_2)
		\Bigr).
	\end{equation}
\end{great}

These (pseudo)metrics are related by the following equation (where $F_1$ and $F_2$ denote non-empty closed subsets of ${\rm Rel}(\tau)$:
	\[
{\rm dist}_{\mathfrak X}^{\rm H}(F_1,F_2)={\rm d}_{\mathfrak X}^{\rm H}(T_\tau^X(F_1),T_\tau^X(F_2))
\]

Using the injective mapping $F(\mathsf I)_*^{-1}$ we can transfer
to $\mathscr M_{X,\mathsf I}$ 
 the Hausdorff distance
 ${\rm d}_{\mathfrak X}^{\rm H}$ defined on $\mathscr C_{\overline{\mathscr M^X_\tau}}$, thus defining a
 distance ${\rm d}_{\mathfrak X,\mathsf I}^{\rm H}$ on $\mathscr M_{X,\mathsf I}$:
 \[
 {\rm d}_{\mathfrak X,\mathsf I}^{\rm H}(\mu_1,\mu_2)={\rm d}_{\mathfrak X}^{\rm H}(F(\mathsf I)_*^{-1}(\mu_1),F(\mathsf I)_*^{-1}(\mu_2)).
 \]
 (Note that this metric usually does not define the same topology as $d^{\rm LP}_{\mathfrak Y}$.)
 
Using the mapping $T_\sigma^Y$ we can transfer to  ${\rm Im}\,P(\mathsf I)$ the metric ${\rm dist}_{\mathfrak X,\mathsf I}^{\rm H}$ just defined on $\mathscr M_{X,\mathsf I}$. As $T_\sigma^Y$ is not injective in general we get this way a pseudometric 
 ${\rm dist}_{\mathfrak X,\mathsf I}^{\rm H}$ on ${\rm Im}\,P(\mathsf I)$:
 \begin{align*}
 {\rm dist}_{\mathfrak X,\mathsf I}^{\rm H}(\mathbf A,\mathbf B)&={\rm d}_{\mathfrak X}^{\rm H}(F(\mathsf I)_*^{-1}(T_\sigma^Y(\mathbf A)),F(\mathsf I)_*^{-1}(T_\sigma^Y(\mathbf B)))\\
 	&={\rm d}_{\mathfrak X}^{\rm H}(T_\tau^X(P(\mathsf I)^{-1}(\mathbf A)),T_\tau^X(P(\mathsf I)^{-1}(\mathbf B))
 \end{align*}
 Hence we have
 \begin{great}
 	\begin{equation}
 	\label{eq:distHdH}
 	{\rm dist}_{\mathfrak X,\mathsf I}^{\rm H}(\mathbf A,\mathbf B)={\rm dist}_{\mathfrak X}^{\rm H}(P(\mathsf I)^{-1}(\mathbf A),P(\mathsf I)^{-1}(\mathbf B)).
 	\end{equation}
 \end{great}
 
 The situation is summarized in the following diagram:
 
$$
\xy\xymatrix@C=.1\textwidth{
&(\mathscr C_{\overline{\mathscr M^X_\tau}},{\rm d}_{\mathfrak X}^{\rm H})
&(\mathscr C_{\overline{{\rm Rel}(\tau)}},{\rm dist}_{\mathfrak X}^{\rm H})\ar[l]_{T_\tau^X}
\\
&(\overline{\mathscr M_{X,\mathsf I}},{\rm d}_{\mathfrak X,\mathsf I}^{\rm H})\ar[u]_{F(\mathsf I)_*^{-1}}
&({\rm Im}\,P(\mathsf I),{\rm dist}_{\mathfrak X,\mathsf I}^{\rm H})\ar[u]_{P(\mathsf I)^{-1}}\ar[l]_{T_\sigma^Y}\\
}\endxy
$$

It follows from Lemma~\ref{lem:sm} that for every $\mathbf A,\mathbf B\in{\rm Im}\,P(\mathsf I)$ we have
\begin{great}
\begin{equation}
	{\rm dist}_{\mathfrak X,\mathsf I}^{\rm H}(\mathbf A,\mathbf B)\geq {\rm dist}_{\mathfrak Y}(\mathbf A,\mathbf B).
\end{equation}
\end{great}
(In particular the topology defined by the pseudometric ${\rm dist}_{\mathfrak X,\mathsf I}$ is finer that the topology defined by the pseudometric ${\rm dist}_{\mathfrak Y}$.)

Our basic notion of convergence with respect to an interpretation is the following (which we sketched in the introduction):

\begin{great}
	\begin{definition}[Lift-Hausdorff convergence]
	\label{def:lHc}
	Let $\mathsf I:\tau\rightarrow\sigma$ be a basic interpretation and let $X$ be a fragment of ${\rm FO}(\tau)$. A sequence
	$\seq{A}$ of finite $\sigma$-structures in ${\rm Im}\,P(\mathsf I)$	is {\em $\mathsf I_*(X)$-convergent} if, for every $X$-convergent subsequence $\seq{B}_f$ of lifts of $\seq{A}$ (meaning $P(\mathsf I)(\seq{B}_f)=\seq{A}_f$) there exists an $X$-convergent sequence $\seq{C}$ of lifts of $\seq{A}$ extending $\seq{B}_f$ (i.e. such that $P(\mathsf I)(\seq{C})=\seq{A}$ and $\seq{C}_f=\seq{B}_f$).

	We  refer to the general notion of $\mathsf I_*(X)$-convergence as {\em lift-Hausdorff convergence} (or simply {\em lift convergence}).
\end{definition}
\end{great}

This convergence admits an alternative equivalent definition, which justifies the term of ``lift-Hausdorff convergence'':

\begin{theorem}[Metrization]
\label{thm:Cauchy}
	Let $\mathfrak X$ be an arbitrary  cover chain of $X$, and let $\mathfrak Y$ be an arbitrary chain covering of $Y=L(\mathsf I)^{-1}(X)$ such that every formula in $L(\mathsf I)(Y_n)$ is logically equivalent to a formula in $X_n$.

	Then a sequence $\seq{A}$ of $\sigma$-structures in ${\rm Im}\,P(\mathsf I)$ is $\mathsf I_*(X)$-convergent if and only if it is Cauchy for ${\rm dist}_{\mathfrak X,\mathsf I}^{\rm H}$.
	
\end{theorem}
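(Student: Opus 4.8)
The plan is to show the equivalence by unpacking both sides into statements about the auxiliary pseudometric space $({\rm Rel}(\tau),{\rm dist}_{\mathfrak X})$ and its completion $(\overline{{\rm Rel}(\tau)}^X,{\rm dist}_{\mathfrak X})$, which is a \emph{compact} metric space by the representation theorem of Section~\ref{sec:rep}. First I would record the key translation: by \eqref{eq:distHdH}, the sequence $\seq{A}$ is Cauchy for ${\rm dist}_{\mathfrak X,\mathsf I}^{\rm H}$ if and only if the sequence of fibres $P(\mathsf I)^{-1}(\mathbf A_n)$ is Cauchy in the Hausdorff pseudometric ${\rm dist}_{\mathfrak X}^{\rm H}$ on the space of non-empty closed subsets of ${\rm Rel}(\tau)$. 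Passing to the compact completion $\overline{{\rm Rel}(\tau)}^X$, each fibre has a well-defined closure $\widehat{P(\mathsf I)^{-1}(\mathbf A_n)}$, a non-empty closed subset of a compact metric space, and by completeness/compactness of the Hausdorff hyperspace (cited in the Preliminaries) the Cauchy condition is equivalent to convergence of these closed sets to a non-empty closed limit set $\mathbf K\subseteq\overline{{\rm Rel}(\tau)}^X$.

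Next I would prove the forward direction ($\mathsf I_*(X)$-convergence $\Rightarrow$ Cauchy). Suppose $\seq{A}$ is not Cauchy. Then there is $\epsilon>0$ and, along some subsequence, pairs of fibres at Hausdorff distance $\geq\epsilon$; this supplies, via the definition of ${\rm dist}_{\mathfrak X}^{\rm H}$, lifts whose represented measures refuse to stabilise. Concretely I would extract, using compactness of $\overline{\mathscr M^X_\tau}$, a subsequence $f$ and an $X$-convergent sequence of lifts $\seq{B}_f$ of $\seq{A}_f$ whose limit measure is ``isolated by $\epsilon$'' from the fibres over a cofinal set of other indices; then no full $X$-convergent sequence $\seq{C}$ of lifts of $\seq{A}$ can extend $\seq{B}_f$, since extending would force the tail of $T_\tau^X(\seq{C})$ into fibres that are $\epsilon$-far from the prescribed limit, contradicting $X$-convergence of $\seq{C}$. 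This contradicts the hypothesis, giving the Cauchy property. For the reverse direction (Cauchy $\Rightarrow$ $\mathsf I_*(X)$-convergent), given an $X$-convergent subsequence $\seq{B}_f$ of lifts with limit measure $\mu^\star\in\overline{\mathscr M^X_\tau}$, the point $\mu^\star$ lies in the Hausdorff limit $\mathbf K$ of the fibre closures (because $T_\tau^X(\mathbf B_{f(n)})$ lies in the closure of the $f(n)$-th fibre and these converge to $\mathbf K$). Using the L\'evy--Prokhorov structure and the fact that $\mathscr M^X_\tau$ is dense in $\overline{\mathscr M^X_\tau}$ together with $P(\mathsf I)(\mathbf B_{f(n)})=\mathbf A_{f(n)}$, one selects for each index $m$ not in the range of $f$ a lift $\mathbf C_m$ of $\mathbf A_m$ whose represented measure is within $1/m$ of $\mathbf K$'s projection of $\mu^\star$-approximants — more precisely, since $\widehat{P(\mathsf I)^{-1}(\mathbf A_m)}\to\mathbf K\ni\mu^\star$, pick $\mathbf C_m$ in the $m$-th fibre with $T_\tau^X(\mathbf C_m)$ within ${\rm d}^{\rm H}(\widehat{P(\mathsf I)^{-1}(\mathbf A_m)},\mathbf K)+1/m$ of $\mu^\star$; setting $\seq{C}$ to agree with $\seq{B}_f$ on the range of $f$ and to be these $\mathbf C_m$ elsewhere yields an $X$-convergent sequence of lifts of $\seq{A}$ with limit $\mu^\star$ extending $\seq{B}_f$.

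A subtlety I would be careful about is that ${\rm dist}_{\mathfrak X}$ and hence ${\rm dist}_{\mathfrak X}^{\rm H}$ are genuine \emph{pseudo}metrics, so throughout I must work with $\equiv_X$-classes (equivalently with images in $\overline{\mathscr M^X_\tau}$) rather than structures; Lemma~\ref{lem:sm} and the compatibility assumption on the chain coverings $\mathfrak X,\mathfrak Y$ are exactly what make this bookkeeping harmless, and ${\rm Im}\,P(\mathsf I)$ being where $\seq{A}$ lives guarantees every fibre $P(\mathsf I)^{-1}(\mathbf A_n)$ is non-empty. I would also use that the fibres, viewed in $\overline{\mathscr M^X_\tau}$, have closures that are non-empty (so the Hausdorff distance is defined) — this needs only non-emptiness of each fibre plus closure.

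The main obstacle I anticipate is the reverse direction's ``extension'' step: producing a \emph{single} full $X$-convergent sequence $\seq{C}$ of lifts of $\seq{A}$ extending the given $\seq{B}_f$, i.e.\ simultaneously (i) matching $\seq{B}_f$ on the chosen subsequence, (ii) lying fibrewise over $\seq{A}$, and (iii) converging in $\overline{\mathscr M^X_\tau}$ to the same limit $\mu^\star$. The delicate point is that convergence in the Hausdorff metric gives, for each index, \emph{some} measure in the fibre close to $\mu^\star$, but we must verify this closeness is uniform enough (with an explicit $1/m$-type bound driven by ${\rm d}^{\rm H}(\widehat{P(\mathsf I)^{-1}(\mathbf A_m)},\mathbf K)\to 0$) that the assembled sequence actually $X$-converges and not merely has $\mu^\star$ as a cluster point; and that the measures so chosen are realised by genuine \emph{finite} lifts $\mathbf C_m\in{\rm Rel}(\tau)$, which is where the density of $\mathscr M^X_\tau$ in $\overline{\mathscr M^X_\tau}$ and the surjectivity of $P(\mathsf I)$ onto ${\rm Im}\,P(\mathsf I)$ within each fibre are invoked. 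Handling (i)–(iii) together cleanly is the crux; the forward direction and all the metric identities are then essentially formal consequences of the Preliminaries.
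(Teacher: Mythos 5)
Your proposal is correct and follows essentially the same route as the paper: one direction by contradiction, extracting a convergent subsequence of lifts whose limit is uniformly separated from a cofinal set of fibres and hence cannot be extended, and the other by assembling the extension from (near-)nearest lifts in each fibre, with the Hausdorff--Cauchy property controlling the error. The only cosmetic difference is that you route the construction through the Hausdorff limit set $\mathbf K$ and the limit measure $\mu^\star$ in the compact completion, whereas the paper picks, for each index $m$, the lift of $\mathbf A_m$ nearest to $\mathbf B_{f(N(m))}$ directly; both yield the same estimate.
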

\begin{proof}
We consider the two implications.

First assume that the sequence $\seq{A}$ of $\sigma$-structures in ${\rm Im}\,P(\mathsf I)$ is $\mathsf I_*(X)$-convergent and let $\seq{B}_f$ be an $X$-convergent subsequence such that $P(\mathsf I)(\seq{B}_f)=\seq{A}_f$. 
For every positive integer $m$, let $N(m)$ be minimum integer such that
 $f(N(m))\geq m$. Let $\mathbf C_m$ be a $\tau$-structure in 
 $P(\mathsf I)^{-1}(\mathbf A_{N(m)})$ such that 
 ${\rm dist}_{\mathfrak X}(\mathbf C_m,\mathbf B_{f(N(m))})$ is minimum. Note that the minimum is attained as $P(\mathsf I)^{-1}(\mathbf A_{N(m)})$ is compact. By definition we have 
 \[
 {\rm dist}_{\mathfrak X}(\mathbf C_m,\mathbf B_{f(N(m))})\leq {\rm dist}_{\mathfrak X,\mathsf I}(\mathbf A_m,\mathbf A_{f(N(m))}).
 \]
 As $\seq{A}$ is Cauchy for ${\rm dist}_{\mathfrak X,\mathsf I}$ and $\seq{B}_f$ is Cauchy for ${\rm dist}_{\mathfrak X}$  it directly follows that $\seq{C}$ is Cauchy for ${\rm dist}_{\mathfrak X}$, i.e. that $\seq{C}$ is $X$-convergent.

We now consider the other direction.
Assume that for every $X$-convergent subsequence $\seq{B}_f$ such that $P(\mathsf I)(\seq{B}_f)=\seq{A}_f$ there exists a sequence $\seq{C}$ such that $\seq{C}_f=\seq{B}_f$ and $P(\mathsf I)(\seq{C})=\seq{A}$, and
assume for contradiction that the sequence $\seq{A}$ is not $\mathsf I_*(X)$-convergent. Then there exists $\alpha>0$, such  that for every integer $N$ there exist integers $n,m>N$ and 
$\mathbf{B}_m\in P(\mathsf I)^{-1}(\mathbf A_m)$ such that for every 
$\mathbf{C}_n\in P(\mathsf I)^{-1}(\mathbf A_n)$ we
have 
${\rm dist}_{\mathfrak X}(\mathbf B_m,\mathbf C_n)>\alpha$.
This allows to construct subsequence $\seq{B}_f$ and $\seq{C}_g$ (where $(f(i),g(i)$ correspond to a pair of admissible values of $m$ and $n$ with $\min(m,n)>\max(f(i-1),g(i-1))$. Moreover, we can assume that $\seq{B}_f$ is $X$-convergent. By assumption the subsequence $\seq{B}_f$ can be extended into a full $X$-convergent sequence, which we (still) denote by $\seq{B}$ such that $P(\mathsf I)(\seq{B})=\seq{A}$. In particular, there exist some $N$ such that for every $n,m>N$ we have
${\rm dist}_{\mathfrak X}(\mathbf B_n,\mathbf B_m)<\alpha$.
In particular, 
${\rm dist}_{\mathfrak X}(\mathbf B_{f(n)}\mathbf B_{g(n)})<\alpha$, 
what contradicts the minimality hypothesis on
${\rm dist}_{\mathfrak X}(\mathbf B_{f(n)},\mathbf C_{g(n)})$.
\end{proof}

Note that Theorem~\ref{thm:Cauchy} clearly states that the property of a sequence to be Cauchy for ${\rm dist}_{\mathfrak X,\mathsf I}^{\rm H}$ is independent of the particular choice of the chain coverings $\mathfrak X$ and $\mathfrak Y$.

\begin{theorem}[Representation]
\label{thm:LGrep}
	The $\mathsf I_*(X)$-limit of a sequence of an $\mathsf I_*(X)$-convergent sequence can be uniquely represented by means of a non-empty compact subset of $\overline{\mathscr M^X_\tau}$.
\end{theorem}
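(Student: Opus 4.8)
The plan is to leverage Theorem~\ref{thm:Cauchy} together with the compactness of the various Hausdorff spaces established in the Preliminaries. Given an $\mathsf I_*(X)$-convergent sequence $\seq{A}$, we know from Theorem~\ref{thm:Cauchy} that $\seq{A}$ is Cauchy for the pseudometric ${\rm dist}_{\mathfrak X,\mathsf I}^{\rm H}$. By \eqref{eq:distHdH}, this pseudometric is precisely the pullback along $\mathbf A\mapsto P(\mathsf I)^{-1}(\mathbf A)$ of the Hausdorff metric ${\rm dist}_{\mathfrak X}^{\rm H}$ on non-empty closed subsets of ${\rm Rel}(\tau)$; equivalently, via $T_\tau^X$, the pullback of ${\rm d}_{\mathfrak X}^{\rm H}$ on $\mathscr C_{\overline{\mathscr M^X_\tau}}$. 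So the sequence of compact sets $K_n := T_\tau^X(P(\mathsf I)^{-1}(\mathbf A_n)) \subseteq \overline{\mathscr M^X_\tau}$ is Cauchy in $(\mathscr C_{\overline{\mathscr M^X_\tau}}, {\rm d}_{\mathfrak X}^{\rm H})$.

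The key structural input is that $(\overline{\mathscr M^X_\tau},{\rm d}_{\mathfrak X}^{\rm LP})$ is a compact metric space (established at the end of Section~\ref{sec:rep}), hence by the cited theorem on the Hausdorff metric (\cite{hausdorff1962set}) the space $(\mathscr C_{\overline{\mathscr M^X_\tau}}, {\rm d}_{\mathfrak X}^{\rm H})$ is itself a compact metric space, in particular complete. Therefore the Cauchy sequence $(K_n)$ converges in the Hausdorff metric to a unique non-empty closed — hence compact — subset $K_\infty \subseteq \overline{\mathscr M^X_\tau}$. I would then define the $\mathsf I_*(X)$-limit of $\seq{A}$ to be this $K_\infty$, and verify that it deserves the name: for any $X$-convergent subsequence $\seq{B}_f$ of lifts, extended (by $\mathsf I_*(X)$-convergence) to a full $X$-convergent sequence $\seq{C}$ with limit measure $\mu \in \overline{\mathscr M^X_\tau}$, the point $\mu$ lies in $K_\infty$, since $T_\tau^X(\mathbf C_n) \in K_n$ and $K_n \to K_\infty$; conversely every $\mu \in K_\infty$ arises this way by approximating $\mu$ within each $K_n$ and using a diagonal argument to pull the approximants back to genuine lifts (this is where one uses that $P(\mathsf I)^{-1}(\mathbf A_n)$ is compact, so the infimum realizing the Hausdorff distance is attained).

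The main obstacle is the uniqueness claim, together with making precise in what sense $K_\infty$ represents the limit — one must check that two different $\mathsf I_*(X)$-convergent sequences with the same "limiting behaviour" (i.e.\ the same family of subsequential lift-limits) yield the same closed set, and that $K_\infty$ does not depend on the auxiliary choices of chain coverings $\mathfrak X,\mathfrak Y$. The independence from $\mathfrak X,\mathfrak Y$ follows because, as already remarked after Theorem~\ref{thm:Cauchy}, the Cauchy property is choice-independent, and more sharply because the underlying weak topology on $\overline{\mathscr M^X_\tau}$ and the induced Vietoris topology on $\mathscr C_{\overline{\mathscr M^X_\tau}}$ do not depend on the metric realizing them; so the limiting closed set, being a topological limit, is intrinsic. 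Uniqueness within a fixed metric is immediate from the Hausdorff metric being a genuine metric on $\mathscr C_{\overline{\mathscr M^X_\tau}}$ (here one does need $\overline{\mathscr M^X_\tau}$ to be closed, hence the "closure" in its definition is essential, as flagged in the discussion preceding Definition~\ref{def:lHc}).

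I would close by recording the explicit description $K_\infty = \{\mu \in \overline{\mathscr M^X_\tau} : \mu \text{ is the weak limit of } (T_\tau^X(\mathbf B_n))_n \text{ for some sequence of lifts } \seq{B} \text{ of } \seq{A}\}$, which both exhibits $K_\infty$ as canonically attached to $\seq{A}$ and is the form needed for the quasi-limit results of Section~\ref{sec:quasi}.
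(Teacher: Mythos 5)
Your argument is correct and follows essentially the same route as the paper: invoke Theorem~\ref{thm:Cauchy} to translate $\mathsf I_*(X)$-convergence into the Cauchy property for ${\rm dist}_{\mathfrak X,\mathsf I}^{\rm H}$, identify this via \eqref{eq:distHdH} with Hausdorff convergence of the compact sets $\mathscr L(\mathbf A_n)=T_\tau^X(P(\mathsf I)^{-1}(\mathbf A_n))$ in $\mathscr C_{\overline{\mathscr M^X_\tau}}$, and use compactness (hence completeness) of that Hausdorff space to obtain the unique limiting non-empty compact set. Your additional verification that the limit set consists exactly of the subsequential weak limits of lifts, and your remarks on independence from the chain coverings, go slightly beyond what the paper records but are consistent with it.
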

\begin{proof}
	Let $\seq{A}$ be an $\mathsf I_*(X)$-convergent sequence.
	Let $Y=L(\mathsf I)^{-1}(X)$. We fix a cover chain $\mathfrak X$ of $X$ and a cover chain $\mathfrak Y$ of $Y$ such that every formula in $L(\mathsf I)(Y_n)$ is logically equivalent to a formula in $X_n$.
	
	According to Lemma~\ref{lem:contpush} the pushforward mapping $F(\mathsf I)_*$ is a continuous function from $P(S^X_\tau)$ to $P(S_\sigma^Y)$ hence for every $\mathbf A$ the set 
	\[
	\mathscr L(\mathbf A)=\{T_\tau^X(\mathbf A^+)\mid P(\mathsf I)(\mathbf A^+)=\mathbf A\}
	\]
	is a non empty closed (hence compact) subset of $\overline{\mathscr M^X_\tau}$. According to Theorem~\ref{thm:Cauchy}, the $\mathsf I_*(X)$-convergence of $\seq{A}$ is equivalent to the convergence of $\seq{A}$ according to ${\rm dist}_{\mathfrak X,\mathsf I}^{\rm H}$ metric. As noticed in beginning of Section~\ref{sec:LHC} we have
	 \begin{align*}
 {\rm dist}_{\mathfrak X,\mathsf I}^{\rm H}(\mathbf A,\mathbf B)&={\rm d}_{\mathfrak X}^{\rm H}(T_\tau^X(P(\mathsf I)^{-1}(\mathbf A)),T_\tau^X(P(\mathsf I)^{-1}(\mathbf B))\\
 &={\rm d}_{\mathfrak X}^{\rm H}(\mathscr L(\mathbf A),\mathscr L(\mathbf B))
 \end{align*}
Thus, as ${\rm d}_{\mathfrak X}^{\rm H}$ is the Hausdorff distance on the space $\mathscr C_{\overline{\mathscr M^X_\tau}}$ of non-empty closed subsets of $\overline{\mathscr M^X_\tau}$ defined by the L\'evy-Prokhorov distance ${\rm d}_{\mathfrak X}^{\rm LP}$ on $\overline{\mathscr M^X_\tau}$, the $\mathsf I_*(X)$-convergence of $\seq{A}$ is equivalent to the convergence of the sequence
	$\mathscr L(\seq{A})=(\mathscr L(\mathbf A_n))_{n\in\mathbb N}$ in the Hausdorff sense.
	
	It follows that the limit of $\seq{A}$ can be represented uniquely by the Hausdorff limit of $\mathscr L(\seq{A})$, which is a non-empty compact subset of $\overline{\mathscr M^X_\tau}$.
\end{proof}

This lemma gives an easy proof of the following result.
\begin{proposition}
\label{prop:XY}
Let $\mathcal C$ be a class of $\sigma$ structures, let $\mathsf I:\tau\rightarrow\sigma$ be an interpretation, and let $X,Y$ be fragments of ${\rm FO}(\tau)$.

	If $X$-convergence implies $Y$-convergence in the class  $\mathcal D=\{\mathbf B\mid P(\mathsf I)(\mathbf B)\in\mathcal C\}$ of $\tau$-structures then 
	$\mathsf I_*(X)$-convergence implies $\mathsf I_*(Y)$-convergence in the class $\mathcal C$.
\end{proposition}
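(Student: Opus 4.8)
The plan is to reduce $\mathsf I_*(X)$-convergence and $\mathsf I_*(Y)$-convergence to ordinary $X$-convergence and $Y$-convergence of lifts, via the subsequence-completion definition (Definition~\ref{def:lHc}), and then invoke the hypothesis that $X$-convergence implies $Y$-convergence in $\mathcal D$. Let $\seq{A}$ be a sequence in $\mathcal C\cap{\rm Im}\,P(\mathsf I)$ that is $\mathsf I_*(X)$-convergent; we must show it is $\mathsf I_*(Y)$-convergent. So let $\seq{B}_f$ be a $Y$-convergent subsequence of lifts of $\seq{A}$, i.e.\ $P(\mathsf I)(\seq{B}_f)=\seq{A}_f$ and $\langle\psi,\seq{B}_f\rangle$ converges for every $\psi\in Y$. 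We want to extend $\seq{B}_f$ to a full $Y$-convergent sequence $\seq{C}$ of lifts of $\seq{A}$.

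First I would upgrade the subsequence $\seq{B}_f$ to be $X$-convergent: since $S^X_\tau$ is a compact metrizable space, the sequence of measures $T^X_\tau(\mathbf B_{f(n)})$ has a weakly convergent subsequence, so there is an increasing $g:\mathbb N\to\mathbb N$ such that $\seq{B}_{f\circ g}$ is $X$-convergent. Note $\seq{B}_{f\circ g}$ is still a subsequence of lifts of $\seq{A}$, over the index subsequence $f\circ g$. Now apply $\mathsf I_*(X)$-convergence of $\seq{A}$ to this $X$-convergent subsequence: there is an $X$-convergent full sequence $\seq{C}$ of lifts of $\seq{A}$ with $\seq{C}_{f\circ g}=\seq{B}_{f\circ g}$ and $P(\mathsf I)(\seq{C})=\seq{A}$. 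Since each $\mathbf C_n$ is a lift of $\mathbf A_n\in\mathcal C$, we have $\seq{C}\subseteq\mathcal D$. By hypothesis, $X$-convergence implies $Y$-convergence in $\mathcal D$, so $\seq{C}$ is $Y$-convergent. This $\seq{C}$ is a full $Y$-convergent sequence of lifts of $\seq{A}$, but it extends $\seq{B}_{f\circ g}$, not $\seq{B}_f$ — that mismatch is the point to be careful about.

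To close the gap I would argue that $\seq{B}_f$ itself is already $Y$-convergent (that was assumed), so $\lim_n\langle\psi,\mathbf B_{f(n)}\rangle=\lim_n\langle\psi,\mathbf B_{f(g(n))}\rangle=\lim_n\langle\psi,\mathbf C_{f(g(n))}\rangle=\lim_n\langle\psi,\mathbf C_n\rangle$ for every $\psi\in Y$, the last equality because $\seq{C}$ is $Y$-convergent and $f\circ g$ is a subsequence. Hence the full sequence $\seq{C}$ has, for every $\psi\in Y$, the same limit of Stone pairings as $\seq{B}_f$. It remains to produce a $Y$-convergent full sequence of lifts whose restriction to the index set $f$ is literally $\seq{B}_f$: simply define $\seq{D}$ by $\mathbf D_{f(n)}=\mathbf B_{f(n)}$ for $n$ in the range of $f$ and $\mathbf D_m=\mathbf C_m$ otherwise. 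Then $P(\mathsf I)(\seq{D})=\seq{A}$, $\seq{D}_f=\seq{B}_f$, and for every $\psi\in Y$ the sequence $\langle\psi,\seq{D}\rangle$ is obtained by interleaving two subsequences with the same limit, hence converges; so $\seq{D}$ is the required $Y$-convergent extension, and $\seq{A}$ is $\mathsf I_*(Y)$-convergent.

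The main obstacle — really the only subtle point — is the bookkeeping around which subsequence gets extended: the definition of $\mathsf I_*(X)$-convergence only feeds us an extension of an $X$-convergent subsequence, so one must pass to such a subsequence, use the class implication $X\Rightarrow Y$ on the full lift sequence it produces, and then splice the originally given $\seq{B}_f$ back in, using that $\seq{B}_f$ was $Y$-convergent with the matching limit. One should also double-check that $\seq{C}\subseteq\mathcal D$, i.e.\ that $\mathcal D$ is exactly the class of $\tau$-structures whose $P(\mathsf I)$-image lies in $\mathcal C$, which is the definition given; and that compactness of $S^X_\tau$ (equivalently, of $\overline{\mathscr M^X_\tau}$) legitimately yields the $X$-convergent subsequence $\seq{B}_{f\circ g}$ — this is immediate from the Representation Theorem of Section~\ref{sec:rep}. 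All remaining verifications are routine.
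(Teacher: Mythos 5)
Your proposal is correct and follows essentially the same route as the paper's proof: extract an $X$-convergent subsubsequence, extend it to a full $X$-convergent (hence, by the hypothesis on $\mathcal D$, $Y$-convergent) sequence of lifts, observe that its $Y$-limit matches that of the given $Y$-convergent subsequence since they share a common subsequence, and splice the original subsequence back in on the indices in the range of $f$. Your writeup is in fact slightly more explicit than the paper's about why the spliced sequence converges.
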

\begin{proof}
	Let $\seq{A}$ be an $\mathsf I_*(X)$-convergent sequence of $\tau$-structures in $\mathcal C$ and let $\seq{B}_f$ be a $Y$-converging subsequence of $\tau$-structures (in $\mathcal D$) such that $P(\mathsf I)(\mathbf B_{f(n)})=\mathbf A_{f(n)}$.
		Let $\seq{B}_{g\circ f}$ be an $X$-converging subsequence of $\seq{B}_f$. Then there exists, according to Theorem~\ref{thm:SC} an $X$-convergent sequence $\seq{C}$ such that $\seq{C}_{g\circ f}=\seq{B}_{g\circ f}$ and
		$P(\mathsf I)(\seq{C})=\seq{A}$ (hence $\seq{C}$ is in $\mathcal D$). As $X$-convergence implies $Y$-convergence on $\mathcal D$ the sequence $\seq{D}$ is $Y$ convergent, and has the same $Y$-limit as the $Y$-convergent sequence $\seq{B_f}$ as they share infinitely many elements. It follows that the sequence $\seq{D}$ defined by 
\[
\mathbf D_n=\begin{cases}
\mathbf B_{n}&\text{if $(\exists i)\ n=f(i)$}\\
\mathbf C_n&\text{otherwise}
\end{cases}
\]
as the property that $\seq{D}_f=\seq{B}_f$ and $P(\mathsf I)(\seq{D})=\seq{A}$. By Theorem~\ref{thm:SC} we deduce that $\seq{A}$ is $\mathsf I_*(Y)$-convergent.
\end{proof}

Here are some more remarks indicating convenient properties of $\mathsf I_*(X)$-convergence.

First note that if $\mathsf I:\sigma\rightarrow\sigma$ is the identity interpretation, then
${\rm dist}_{\mathfrak X,\mathsf I}={\rm dist}_{\mathfrak X}$ and 
 $\mathsf I_*(X)$-convergence is the same as $X$-convergence.
Also, we have that every sequence $\seq{A}$ in ${\rm Im}\, P(\mathsf I)$ has an $\mathsf I_*(X)$-convergent subsequence.
Finally, let us remark
	that for every $\mathsf I:\tau\rightarrow\sigma$, $\mathsf I_*({\rm FO})$-convergence implies ${\rm FO}$-convergence. 

Let $\hat\tau$ be the signature obtained from $\tau$ by duplicating each relation symbol countably many times, which we denote by
 $\hat\tau=\mathbb N\tau$. To each symbol $R\in\tau$ correspond the symbols $R^i$ in $\hat\tau$ (for $i\in\mathbb N$).
We define the interpretation $\mathsf I_i$ obtained from $\mathsf I$ by replacing relations $R$ by $R^i$ ($\mathsf I_i$ is a clone of $\mathsf I$ based on the relations $R^i$).

\begin{proposition}[Almost $\mathsf I_*(X)$-limit probability measure]
	Let $\seq{A}$ be an $\mathsf I_*(X)$-convergent sequence of finite $\sigma$-structures.

	There exists a probability measure $\hat\mu\in \overline{\mathscr M_{\hat\tau}^X}$ such that
	for every $\epsilon>0$ and for every $\seq{C}$ such that $P(\mathsf I)(\seq{C})=\seq{A}$ there exists $i\in\mathbb N$ such that 
\[
{\rm d}^{LP}_{\mathfrak X}(F(\mathsf I_i)_*(\hat\mu), \lim_n T_{\tau}^X(\mathbf C_n))<\epsilon,
\]
where $\lim_n$ stands for the weak limit of probability measures.
\end{proposition}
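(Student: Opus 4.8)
The idea is to build a single ``universal'' limit object living over the inflated signature $\hat\tau=\mathbb N\tau$ by packing together countably many convergent sequences of lifts. Concretely, I would first observe that the space $P(\mathsf I)^{-1}(\mathbf A)$ of lifts of a fixed finite $\sigma$-structure $\mathbf A$ is finite, and more importantly that the set of $X$-convergent sequences of lifts of $\seq{A}$ (up to $\equiv_X$) is \emph{separable} for the metric controlling $X$-convergence: indeed by Theorem~\ref{thm:LGrep} the limit of any such sequence is a point of the compact metric space $\overline{\mathscr M^X_\tau}$, lying in the compact set $\mathscr L^\infty(\seq A):=\lim_n\mathscr L(\mathbf A_n)$. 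Pick a countable dense subset $\{\nu_j\}_{j\in\mathbb N}$ of $\mathscr L^\infty(\seq A)$, and for each $j$ fix a sequence $\seq{C}^{(j)}$ of lifts of $\seq A$ with $T_\tau^X(\mathbf C^{(j)}_n)\Rightarrow\nu_j$; the existence of at least a subsequence converging to each point is guaranteed by the Hausdorff convergence of $\mathscr L(\seq A)$, and since $\seq A$ is $\mathsf I_*(X)$-convergent this subsequence extends (by Definition~\ref{def:lHc}) to a full convergent sequence $\seq{C}^{(j)}$ of lifts.

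\textbf{Construction of $\hat\mu$.} Now interleave these sequences across the copies of $\tau$ inside $\hat\tau$: for each $n$ define a $\hat\tau$-structure $\hat{\mathbf A}_n$ on the common domain $A_n$ whose $R^j$-relations ($j\in\mathbb N$, $R\in\tau$) are the $R$-relations of $\mathbf C^{(j)}_n$. This is a genuine $\hat\tau$-structure (we may truncate to the first $n$ copies and pad the rest arbitrarily, or simply allow countably many relations since the signature is countable). Because each $\seq{C}^{(j)}$ is $X$-convergent, the sequence $\hat{\seq A}=(\hat{\mathbf A}_n)$ is $X$-convergent in $\mathscr R\mathrm{el}(\hat\tau)$ for the fragment $X$ read off each copy: the Stone pairings $\langle\psi,\hat{\mathbf A}_n\rangle$ for $\psi\in X$ over the $j$-th copy equal $\langle\varphi,\mathbf C^{(j)}_n\rangle$ where $\varphi$ is the corresponding $\tau$-formula, which converges; and any formula of $X$ mentioning finitely many copies reduces, via a product-type argument over disjoint relation symbols, to combinations of these. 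Hence $T_{\hat\tau}^X(\hat{\mathbf A}_n)$ converges weakly to some $\hat\mu\in\overline{\mathscr M^X_{\hat\tau}}$, and by construction $F(\mathsf I_j)_*(\hat\mu)=\lim_n T_\tau^X(\mathbf C^{(j)}_n)=\nu_j$ for every $j$, using Lemma~\ref{lem:contpush} to commute the pushforward with the weak limit.

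\textbf{The approximation step.} Finally, given an arbitrary sequence $\seq{C}$ of lifts with $P(\mathsf I)(\seq C)=\seq A$ and given $\epsilon>0$: pass to an $X$-convergent subsequence $\seq{C}_h$, extend it (by $\mathsf I_*(X)$-convergence of $\seq A$) to a full $X$-convergent sequence $\seq{C}'$ of lifts, whose limit $\lim_n T_\tau^X(\mathbf C'_n)$ lies in $\mathscr L^\infty(\seq A)$ and shares a subsequence with $\seq C$, so that $\lim_n T_\tau^X(\mathbf C_n)$ (along $h$, hence along the whole sequence if $\seq C$ itself converges) coincides with that point of $\mathscr L^\infty(\seq A)$. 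Then density of $\{\nu_j\}$ supplies an index $i$ with $\mathrm d^{LP}_{\mathfrak X}(\nu_i,\lim_n T_\tau^X(\mathbf C_n))<\epsilon$, and since $\nu_i=F(\mathsf I_i)_*(\hat\mu)$ this is exactly the desired inequality. The main obstacle I anticipate is the bookkeeping in the interleaving construction: one must check carefully that $\hat{\seq A}$ is $X$-convergent in $\mathscr R\mathrm{el}(\hat\tau)$ (the Stone pairing of a formula touching several copies is not literally a Stone pairing of a single $\mathbf C^{(j)}_n$), and that the extension clause of Definition~\ref{def:lHc} can be applied simultaneously and coherently for all $j$ — which works because each extension is carried out independently over its own copy of $\tau$, so there is no interference, but this independence has to be spelled out. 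A minor subtlety is that $\seq C$ need not itself be convergent, so one should state the conclusion for a subsequence or first replace $\seq C$ by a convergent subsequence; since the hypothesis only requires $P(\mathsf I)(\seq C)=\seq A$, extracting a convergent subsequence and invoking $\mathsf I_*(X)$-convergence is exactly the right move.
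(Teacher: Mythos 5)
Your construction is essentially the paper's: amalgamate countably many sequences of lifts of $\seq{A}$ into a single sequence of $\hat\tau$-structures and read off the claimed measure as the limit, with $F(\mathsf I_i)_*$ extracting the $i$-th marginal. You are in fact more explicit than the paper on the one point that matters: the lifts packed into the $i$-th copy must be chosen so that their $X$-limits form a dense subset of the Hausdorff limit of $\mathscr L(\seq{A})$ (the paper merely says ``choose $\mathbf B_n^i$'' and asserts the conclusion is obvious), and your compactness/separability argument for $\mathscr L^\infty(\seq{A})\subseteq\overline{\mathscr M^X_\tau}$ supplies exactly that; note that Hausdorff convergence even gives you a full convergent sequence of lifts with limit $\nu_j$ directly (take nearest points in $\mathscr L(\mathbf A_n)$), so the extension clause of Definition~\ref{def:lHc} is not needed there. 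One step of yours is both unjustified and unnecessary: the interleaved sequence $(\hat{\mathbf A}_n)_{n\in\mathbb N}$ need not be $X$-convergent, and no ``product-type argument over disjoint relation symbols'' can make it so, because a formula of $X$ mixing relations $R^j$ and $R^{j'}$ from two copies has a Stone pairing governed by the \emph{joint} distribution of the two lifts on $A_n$, which is not determined by the two marginal pairings and can oscillate with $n$. The repair is the one the paper uses: extract an $X$-convergent subsequence of $(\hat{\mathbf A}_n)_{n\in\mathbb N}$ (compactness of $P(S^X_{\hat\tau})$) and let $\hat\mu$ be its limit; since each copy converges along the full sequence, the marginals $F(\mathsf I_j)_*(\hat\mu)=\nu_j$ are unaffected by the extraction, and the rest of your approximation step goes through verbatim.
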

\begin{proof}
For $i\in\mathbb N$ we choose $\mathbf B_n^i$ such that $P(\mathsf I)(\mathbf B_n^i)=\mathbf A_n$. We construct the $\hat\tau$-structure
$\hat{\mathbf B}_n$ by amalgamating all the relations of all the $\mathbf B_n^i$. We denote by $\mathsf S_i$ the interpreting projection $\hat{\mathbf B}_n\mapsto\mathbf B_n^i$. Note that $\mathsf I_i=\mathsf I\circ\mathsf S_i$. Then we have

$$\xy\xymatrix{
\text{$\hat\tau$-structures}\ar@{..}[rrr]&&&\hat{\mathbf B}_n\ar_{P(\mathsf S_1)}[dll]\ar^-{P(\mathsf S_2)}[dl]\ar^{P(\mathsf S_i)}[dr]\\
\text{$\tau$-structures}\ar@{..}[r]&\mathbf B_n^1\ar_{P(\mathsf I)}[drr]&\mathbf B_n^2\ar^-{P(\mathsf I)}[dr]&\cdots&\mathbf B_n^i\ar^{P(\mathsf I)}[dl]&\cdots\\
\text{$\sigma$-structures}\ar@{..}[rrr]&&&\mathbf A_n
}\endxy$$

Then we consider an $X$-convergent subsequence $\seq{\hat B}_f$ of $\seq{\hat B}$, the limit of which is represented by the probability measure $\hat\mu\in\mathscr M_{\hat\tau}^X$. The measure $\hat\mu$ has obviously the claimed property.
\end{proof}
	
\subsection{Local Global Convergence}
\label{sec:lg}
In this section we show how the abstract framework of Section~\ref{sec:LHC} provides a proper setting for local-global convergence.

The notion of local-global convergence of graphs with bounded degrees has been introduced by Bollob{\'a}s and Riordan \cite{bollobas2011sparse}  based on a colored neighborhood metric. In \cite{hatami2014limits}, Hatami, Lov\'asz, and Szegedy gave the following equivalent definition:

\begin{definition}[\cite{hatami2014limits}]
\label{def:LG-HL}
	A graph sequence $\seq{G}$ of graphs with maximum degree $D$ is {\em local-global convergent} if for every $r, k \in\mathbb N$ and $\epsilon > 0$ there is an index $l$ such that if $n, m > l$, then for every coloring of the vertices of $G_n$ with $k$ colors, there is a coloring of the vertices of $G_m$ with $k$ colors such that the total variation distance between the distributions of colored neighborhoods of radius $r$ in $G_n$ and $G_m$ is at most $\epsilon>0$.
\end{definition}

The following is the principal result which relates local-global convergence to a lift-Hausdorff convergence. 

Let us consider a fixed countable signature $\sigma$ and the signature $\tau$ obtained from $\sigma$ by adding countably many unary symbols. Thus $\sigma\subset\tau$.
Let 
\[\mathsf{Sh}:\sigma\rightarrow\tau\]
be the forgetful interpretation ($\mathsf{Sh}$ for ``Shadow''). This means $\mathsf{Sh}=(\nu,\eta,(\rho_R)_{R\in\sigma})$, where $\nu(x_1):=(x_1=x_1)$, $\eta(x_1,x_2):=(x_1=x_2)$, and 
$\rho_R(x_1,\dots,x_p):=R(x_1,\dots,x_p)$ for $R\in\sigma$ with arity $p$.
Then, for instance:
\begin{itemize}
	\item for a $\tau$-structure $\mathbf A$, the $\sigma$-structure $P(\mathsf{Sh})(\mathbf A)$ is obtained from $\mathbf A$ by forgetting all unary relations in $\tau\setminus\sigma$;
	\item for a formula $\phi\in {\rm FO}(\sigma)$, we have 
 $L(\mathsf{Sh})(\phi)=\phi$;
	\item for $t\in S_\tau$ we have we have $F(\mathsf{Sh})(t)=t\cap{\rm FO}(\sigma)$.
\end{itemize}

By \cite{CMUC} we %also 
know that ${\rm FO}^{\rm local}$-convergence coincides with ${\rm FO}^{\rm local}_1$-convergence for graphs with bounded degree. By Proposition~\ref{prop:XY} the notions of $\mathsf{Sh}_*({\rm FO}^{\rm local})$-convergence and $\mathsf{Sh}_*({\rm FO}^{\rm local}_1)$-convergence will also coincides for graphs with bounded degrees. These notions actually coincides with the notion of local-global convergence of graphs with bounded degrees:

\begin{proposition}
\label{prop:LG}
Let $\seq{G}$ be a sequence of graphs with maximum degree $D$.
Then the following are equivalent:
\begin{enumerate}
	\item $\seq{G}$ is local-global convergent,
	\item $\seq{G}$ is ${\mathsf{Sh}}_*({\rm FO}_1^{\rm local})$-convergent,
	\item $\seq{G}$ is ${\mathsf{Sh}}_*({\rm FO}^{\rm local})$-convergent.
\end{enumerate} 
\end{proposition}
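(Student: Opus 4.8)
The plan is to prove the equivalences by showing $(2)\Leftrightarrow(3)$ first (which is cheap) and then $(1)\Leftrightarrow(2)$, which carries the real content. For $(2)\Leftrightarrow(3)$, I would invoke the fact recalled just before the statement: by \cite{CMUC}, ${\rm FO}^{\rm local}$-convergence and ${\rm FO}_1^{\rm local}$-convergence coincide on graphs of bounded degree $D$. The class $\mathcal D=\{\mathbf B\mid P(\mathsf{Sh})(\mathbf B)\in\mathcal C\}$ of $\tau$-lifts of such graphs consists of bounded-degree graphs together with unary markings, and since the unary predicates do not affect degrees, the same equivalence of ${\rm FO}^{\rm local}$ and ${\rm FO}_1^{\rm local}$ holds in $\mathcal D$ (a $k$-colored bounded-degree graph is still a bounded-degree structure, and connected components are unchanged). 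Then Proposition~\ref{prop:XY}, applied in both directions with $X={\rm FO}^{\rm local}$, $Y={\rm FO}_1^{\rm local}$ and vice versa, yields that $\mathsf{Sh}_*({\rm FO}^{\rm local})$-convergence and $\mathsf{Sh}_*({\rm FO}_1^{\rm local})$-convergence coincide on $\mathcal C$.

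For $(1)\Leftrightarrow(2)$ I would unwind both definitions into a common combinatorial form. On the Hatami--Lov\'asz--Szegedy side (Definition~\ref{def:LG-HL}), the data is: for all $r,k$ and $\epsilon$, eventually any $k$-coloring of $G_n$ is matched by a $k$-coloring of $G_m$ with total variation distance $\le\epsilon$ between the radius-$r$ colored-neighborhood distributions. On the lift side, $\mathsf{Sh}_*({\rm FO}_1^{\rm local})$-convergence, via Theorem~\ref{thm:Cauchy}, is equivalent to $\seq{G}$ being Cauchy for ${\rm dist}_{\mathfrak X,\mathsf{Sh}}^{\rm H}$, which by \eqref{eq:distHdH} equals ${\rm dist}_{\mathfrak X}^{\rm H}(P(\mathsf{Sh})^{-1}(G_n),P(\mathsf{Sh})^{-1}(G_m))$ — the Hausdorff distance between the sets of all $\tau$-lifts of $G_n$ and of $G_m$, measured in the ${\rm FO}_1^{\rm local}$-metric ${\rm dist}_{\mathfrak X}$. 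The key translation is therefore: a $k$-coloring of a graph is precisely a $\tau$-lift using $k$ of the countably many unary symbols (monadic lifts are exactly vertex colorings, as remarked in the introduction), and the ${\rm FO}_1^{\rm local}$-statistics of such a lift encode exactly the distribution of radius-$r$ colored balls — because a local formula in one free variable with a bounded quantifier depth can describe the isomorphism type of the colored ball of a fixed radius around that variable, and conversely every ${\rm FO}_1^{\rm local}$ formula is a Boolean combination of such ball-type formulas up to the relevant radius (here we use the bounded degree $D$, so each radius-$r$ ball has bounded size and there are finitely many types). Thus $|\langle\phi,\mathbf G_n^+\rangle-\langle\phi,\mathbf G_m^+\rangle|$ being small for all $\phi$ in a finite level $X_n$ of the chain covering is equivalent, up to the usual $1/n$ truncation in ${\rm dist}_{\mathfrak X}$, to the colored-neighborhood distributions being close in total variation for the corresponding radius and color count.

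Having set up this dictionary, the proof is a matching of quantifiers. Given local-global convergence and a parameter $n$ of the chain covering $\mathfrak X$, the formulas in $X_n$ involve only finitely many colors $k$ and bounded radius $r$; choosing $\epsilon$ small relative to $1/n$ and the (finite) number of ball-types, the HLS condition provides, for each lift (coloring) of $\mathbf G_n$, a lift of $\mathbf G_m$ that is ${\rm dist}_{\mathfrak X}$-close, and symmetrically, giving ${\rm dist}_{\mathfrak X}^{\rm H}(P(\mathsf{Sh})^{-1}(G_n),P(\mathsf{Sh})^{-1}(G_m))<1/n$ for $n,m$ large, i.e.\ the Cauchy condition. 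Conversely, from the Hausdorff-Cauchy condition one reads off, for fixed $r,k,\epsilon$, a level $n$ of $\mathfrak X$ whose formulas capture radius-$r$, $k$-colored ball types to within $\epsilon$ in total variation, and the Hausdorff closeness then yields the coloring-matching of Definition~\ref{def:LG-HL}. I expect the main obstacle to be the bookkeeping in this dictionary: carefully matching "total variation distance $\le\epsilon$ between radius-$r$ colored-neighborhood distributions" with "$|\langle\phi,\cdot\rangle$ differences $\le$ something for $\phi$ in $X_n$", including handling the $\inf_{n}\max(1/n,\cdot)$ form of ${\rm dist}_{\mathfrak X}$ and the fact that different colorings use different subsets of the countably many unary symbols (so one must argue that only finitely many colors are ever relevant at a given level, and that relabeling colors does not change the statistics). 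The bounded-degree hypothesis is used essentially here, both to make ball-types finite and to ensure ${\rm FO}^{\rm local}_1$-statistics determine ${\rm FO}^{\rm local}$-statistics; once the dictionary is in place, the equivalence of the two Cauchy/convergence conditions is routine.
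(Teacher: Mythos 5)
Your proposal is correct and follows essentially the same route as the paper: the $(2)\Leftrightarrow(3)$ equivalence via Proposition~\ref{prop:XY} and the coincidence of ${\rm FO}^{\rm local}$- and ${\rm FO}_1^{\rm local}$-convergence on bounded-degree colored graphs, and then the dictionary identifying $k$-colorings with monadic lifts and ${\rm FO}_1^{\rm local}$-statistics with colored-ball distributions (the paper makes this precise by choosing the chain covering $X_r$ to consist of $r$-local formulas using only the first $r$ unary predicates, which yields ${\rm dist}_{\mathfrak X}(G_1,G_2)=\inf_r\max(1/r,{\rm d}_{TV}^{(r)}(G_1,G_2))$ and settles the bookkeeping you flagged). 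The quantifier matching is then carried out exactly as you describe.
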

\begin{proof}
For classes of colored graphs with degree at most $D$, ${\rm FO}^{\rm local}_1$-convergence is equivalent to ${\rm FO}^{\rm local}$-convergence (see \cite{CMUC}). It follows from Proposition~\ref{prop:XY} that for these graphs ${\mathsf{Sh}}_*({\rm FO}^{\rm local}_1)$-convergence is equivalent to ${\mathsf{Sh}}_*({\rm FO}^{\rm local})$-convergence. Thus we only have to prove the equivalence of local-global convergence and ${\mathsf{Sh}}_*({\rm FO}_1^{\rm local})$-convergence.

	We consider the fragment $X\subset {\rm FO}_1^{\rm local}$ of formulas consistent with the property of having maximum degree $D$.
	Consider a cover chain $\mathfrak X=(X_r)_{r\in\mathbb N}$ of $X$
%	Consider the covering of ${\rm FO}_1^{\rm local}$ by fragments $X_1,\dots,X_n,\dots$ 
where $X_r$ contains (one representative of the equivalence class of) each formula in $X$ that is $r$-local and use only the $r$ first unary predicates. (Note that $|X_r|$ is finite.)

	It is easily checked that every $r$-local formula $\phi\in X_r$ is equivalent (on graphs with maximum degree $D$) to a formula a the form
	$\bigvee_{B\in \mathcal F_\phi} \zeta_{B,r}(x)$ where $\zeta_{B,r}(x)$ expresses that the ball of radius $r$ rooted at $x$ is isomorphic to the rooted graph $B$, and $\mathcal F_\phi$ is a finite set of rooted graphs of radius at most $r$. It easily follows that the maximum of $|\langle\phi, G_1\rangle-\langle\phi,G_2\rangle|$ over $\phi\in X_r$ equals the total variation distance of the distributions of $r$-balls in $G_1$ and $G_2$ where we consider only the $r$ first colors, which we denote by ${\rm d}_{TV}^{(r)}(G_1,G_2)$. Then we have
\begin{equation}
\label{eq:TV}
{\rm dist}_{\mathfrak X}(G_1,G_2)=
\inf_{r\in\mathbb N}\Bigl\{\max\bigl(1/r,{\rm d}_{TV}^{(r)}(G_1,G_2)\bigr)\Bigr\}
\end{equation}
As one easily checks that
${\rm d}_{TV}(r')(G_1,G_2)\geq {\rm d}_{TV}(r)(G_1,G_2)$ if $r'\geq r$ we have
that for every fixed integer $r$ we have 
\begin{equation}
\label{eq:TV2}
\min\bigl(1/r,{\rm d}_{TV}^{(r)}(G_1,G_2))\bigr)\leq {\rm dist}_{\mathfrak X}(G_1,G_2)\leq \max\bigl(1/r, {\rm d}_{TV}^{(r)}(G_1,G_2)\bigr).
\end{equation}

	Now assume $\seq{G}$ is ${\mathsf{Sh}}_*({\rm FO}_1^{\rm local})$-convergent. Let $k,r$ be a fixed integer. Then ${\mathsf{Sh}}_*({\rm FO}_1^{\rm local})$-convergence of $\seq{G}$ easily implies the convergence of the lifts of $G_n$ by $k$ colors, which means that  for every $\epsilon > 0$ there is an index $l$ such that if $n, m > l$, then for every coloring $G_n^+$ of  $G_n$ with $k$ colors, there is a coloring $G_m^+$ of $G_m$ with $k$ colors such that ${\rm dist}_{\mathfrak X}(G_n^+,G_m^+)<\epsilon$ hence by~\eqref{eq:TV2}
	the total variation distance between the distributions of colored neighborhoods of radius $r$ in $G_n$ and $G_m$ is at most $\epsilon>0$, provided that $\epsilon<1/r$.
	Hence $\seq{G}$ is local-global convergent.

Assume $\seq{G}$ is local-global convergent. Then for every $\epsilon>0$, letting $r=\lceil 1/\epsilon\rceil$, 
 there exists an integer $l$ such that if $n, m > l$, then for every coloring $G_n^+$  of $G_n$ with $r$ colors, there is a coloring $G_m^+$ of  $G_m$ with $r$ colors such that the total variation distance between the distributions of colored neighborhoods of radius $r$ in $G_n$ and $G_m$ is at most $\epsilon$. Hence by~\eqref{eq:TV2} we have
 ${\rm dist}_{\mathfrak X}(G_n^+,G_m^+)<\max(\epsilon,{\rm d}_{TV}^{(r)}(G_n,G_m))\leq\epsilon$. (Note that we do not need to use any of the colors with index greater than $r$.) It follows that $\seq{G}$ is ${\mathsf{Sh}}_*({\rm FO}_1^{\rm local})$-convergent. 
 \end{proof}

Motivated by this theorem we can extend the definition of local-global convergence to general graphs and relational strcutures:

\begin{definition}[Local-global convergence]
\label{def:LG}
	A sequence $\seq{A}$ is {\em local-global convergent} if it is $\mathsf{Sh}_*({\rm FO}^{\rm local})$-convergent.
\end{definition}

The weaker notion of ${\mathsf{Sh}}_*({\rm FO}_1^{\rm local})$-convergence already implies convergence of some graph invariants in an interesting  way. This is, for instance, the case of the Hall ratio $\rho(G)=\alpha(G)/|G|$.
\begin{proposition}
Let $\seq{G}$ be an ${\mathsf{Sh}}_*({\rm FO}_1^{\rm local})$-convergent sequence of graphs.
The the Hall ratio $\rho(G_n)=\alpha(G_n)/|G_n|$ converges.
\end{proposition}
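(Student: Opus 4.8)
The plan is to encode the Hall ratio as a Stone pairing of a \emph{fixed} pair of local formulas evaluated on a well-chosen monadic lift, and then feed this encoding into the defining property of $\mathsf{Sh}_*({\rm FO}_1^{\rm local})$-convergence (Definition~\ref{def:lHc}) together with the compactness underlying structural convergence.

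Concretely, work with $\tau$ obtained from the graph signature $\sigma=\{E\}$ by adjoining countably many unary symbols $U_1,U_2,\dots$, and consider the two formulas
\[
\theta(x):=U_1(x),\qquad \psi(x):=U_1(x)\rightarrow\forall y\,(E(x,y)\rightarrow\neg U_1(y)).
\]
Both are local (indeed $\theta$ is quantifier-free and $\psi$ is $1$-local), so they lie in ${\rm FO}_1^{\rm local}(\tau)$. For a lift $\mathbf A^+$ of a graph $\mathbf A$, writing $S:=U_1^{\mathbf A^+}$ and $B:=\{x\in S\mid x\text{ has a neighbour in }S\}$, one has $\langle\theta,\mathbf A^+\rangle=|S|/|A|$ and $\langle\psi,\mathbf A^+\rangle=1-|B|/|A|$. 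The first ingredient is the one-sided estimate: $S\setminus B$ is always independent, so $\alpha(\mathbf A)\geq |S|-|B|$, that is, for \emph{every} graph $G$ and \emph{every} monadic lift $G^+$,
\[
\rho(G)\;\geq\;\langle\theta,G^+\rangle+\langle\psi,G^+\rangle-1.
\]
The second ingredient is that this bound is attained on a good lift: marking a maximum independent set of $G$ by $U_1$ produces a lift $G^+$ with $\langle\theta,G^+\rangle=\rho(G)$ and $\langle\psi,G^+\rangle=1$.

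With these in hand I would argue as follows. Since $\rho(G_n)\in[0,1]$, it is enough to prove $\liminf_n\rho(G_n)\geq\limsup_n\rho(G_n)$. Put $b:=\limsup_n\rho(G_n)$ and choose an increasing $g$ with $\rho(G_{g(n)})\to b$. Mark a maximum independent set in each $G_{g(n)}$ to get lifts $G^+_{g(n)}$; by compactness of structural convergence there is a further increasing $h$ so that $\seq{B}_{g\circ h}:=(G^+_{g(h(n))})_n$ is ${\rm FO}_1^{\rm local}$-convergent. This is an ${\rm FO}_1^{\rm local}$-convergent subsequence of lifts of $\seq{G}_{g\circ h}$, so by $\mathsf{Sh}_*({\rm FO}_1^{\rm local})$-convergence (Definition~\ref{def:lHc}) there is an ${\rm FO}_1^{\rm local}$-convergent sequence $\seq{C}$ of lifts of the whole $\seq{G}$ with $\seq{C}_{g\circ h}=\seq{B}_{g\circ h}$. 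Then $\langle\theta,\mathbf C_n\rangle$ and $\langle\psi,\mathbf C_n\rangle$ converge, and their limits are read off from the subsequence $g\circ h$ via the second ingredient: they are $b$ and $1$, respectively. Applying the one-sided estimate along $\seq{C}$ yields $\rho(G_n)\geq\langle\theta,\mathbf C_n\rangle+\langle\psi,\mathbf C_n\rangle-1\to b$, hence $\liminf_n\rho(G_n)\geq b$, and therefore $\rho(G_n)\to b$.

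The only genuinely delicate point is the one-sidedness: a single invocation of the lifting property of Definition~\ref{def:lHc} can only transport a \emph{lower} bound on $\rho$, never an upper bound, which is exactly why one must pin the extended sequence $\seq{C}$ to a subsequence carrying the \emph{largest} subsequential value $b$. Checking that $\psi$ is local, that the ``remove the bad vertices'' set $S\setminus B$ is independent, and that ${\rm FO}_1^{\rm local}$-convergence has the required compactness and subsequence-stability (these being independent of the chain covering by Theorem~\ref{thm:Cauchy}) are all routine.
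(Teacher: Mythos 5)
Your proposal is correct and follows essentially the same route as the paper: mark a maximum independent set, pass to a subsequence realizing $\limsup\rho(G_n)$, extend to a full ${\rm FO}_1^{\rm local}$-convergent sequence of lifts via the defining property of $\mathsf{Sh}_*({\rm FO}_1^{\rm local})$-convergence, and use a $1$-local formula to extract, in every $G_n$, an independent set of asymptotic density $\limsup\rho(G_n)$. The only cosmetic difference is that the paper's test formula $\psi(x):=M(x)\wedge(\exists y)(M(y)\wedge{\rm Adj}(x,y))$ counts the ``bad'' marked vertices directly rather than via your implication form, which changes nothing in the argument.
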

\begin{proof}
Let $a=\limsup\rho(G_n)$.  Let $G_n^+$ be obtained by
	 marking (by $M$) a maximum independent set in $G_n$.
	 (Thus $G_n=P(\mathsf{Sh})(G_n^+)$.)
	  We extract a subsequence of $\seq{G^+}$ with limit measure of $M(G_n^+)$ equal to $a$, then an ${\rm FO}_1^{\rm local}$-convergent subsequence.
	  	  According to the lifting property, this subsequence can be extended into a full sequence $\seq{G^*}$.Consider the formula 
	  	  \[
	  	  \psi(x):= M(x)\wedge (\exists y)(M(y)\wedge {\rm Adj}(x,y).
	  	  \]
	  	  Then $\psi(G_n^*)$ is the set of all marked vertices of $G_n$ with to a marked vertex in their neighborhood. Hence $\lim_{n\rightarrow\infty}\langle\psi, G_n^*\rangle=0$ (as it converges to $0$ on the subsequence where $M$ marks an independent set). Moreover, $M(G_n^*)\setminus\psi(G_n^*)$ is independent. It follows that
	  	  \[
	  	  a=\lim_{n\rightarrow\infty}\langle M,G_n^*\rangle= \lim_{n\rightarrow\infty}
	  	  \frac{|M(G_n^*)\setminus\psi(G_n^*)|}{|G_n^*|}
\leq \liminf \rho(G_n)\leq \limsup \rho(G_n)= a.
	  	  \]
	  	  Hence $\rho(G_n)$ converges.
\end{proof}

Let us add the following remarks:
In such a context it is not possible to distinguish (at the limit) a maximal independent set from a near maximal independent set. 
Of course this does not change the property that $\alpha(G_n)/|G_n|$ converges nor the measure of the (near) maximal independent set found in the limit. 

For the chromatic number, local-global convergence is clearly not strong enough, as witnessed by a local-global convergent sequence $\seq{G}$ of bipartite graphs modified by replacing $G_n$ by the disjoint union of $G_n$ and $K_{100}$ for (say) half of the values of $n$. The obtained sequence is still local-global convergent but the chromatic numbers of $G_n$ oscillate between $2$ and $100$. To ensure the convergence of the chromatic number one needs at least $\mathsf{Sh}({\rm FO}_0)$-convergence. However, with $\mathsf{Sh}({\rm FO}_1^{\rm local})$-convergence it is possible to get the convergence of the minimum integer $c$ such that the graphs $G_n$ can be made $c$-colorable by removing $o(|G_n|)$ vertices.

We end this section by giving an example showing that not every graphing is a strong local-global limit of a sequence of finite graphs (For a proof that not every graphing is a weak local-global limit of a sequence of finite graphs, that is an answer to the problem posed in \cite{hatami2014limits}, see \cite{Kun2018}).

\begin{example}
	Consider the graphing $\mathbf G$ with domain $(\mathbb R/\mathbb Z)\times \{1,2,\dots,6\}$, and edge set 

	\begin{align*}
E&=	\{\{(x,1),(y,1)\}, \{(x,6),(y,6)\}\mid x\in [0,1], y=x+\alpha\bmod 1\}\\
&	\cup \{\{(x,i),(x,i+1)\}\mid x\in [0,1], 1\leq i<5\}\\
&	\cup \{\{(x,2),(x,4)\}, \{(x,3),(x,5)\}\mid x\in [0,1]\}
	\end{align*}
	 represented on Fig.~\ref{fig:nonLG3}
\begin{figure}[ht]
	\begin{center}
		\includegraphics[width=.5\textwidth]{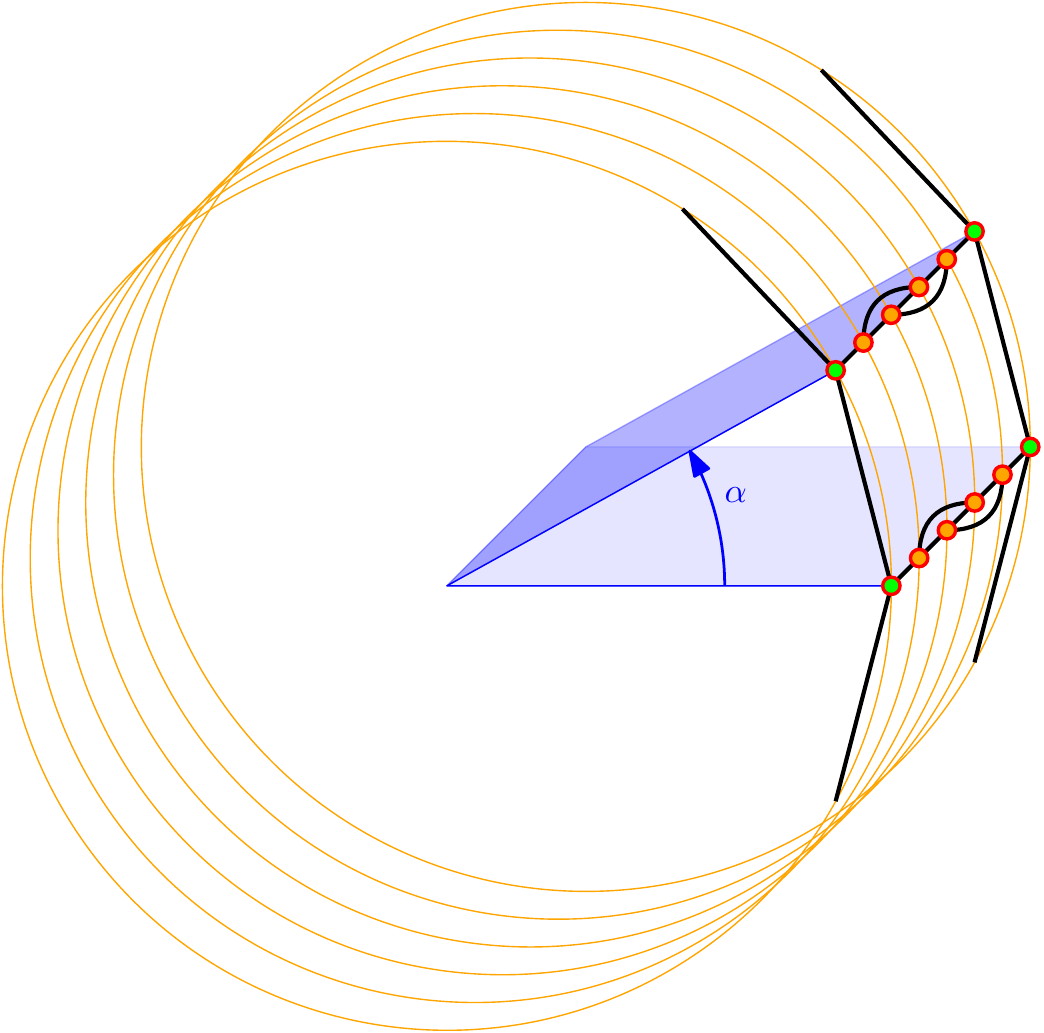}
	\end{center}
	\caption{Example of a $3$-regular graphing that is not a strong local global limit.}
	\label{fig:nonLG3}
\end{figure}

Assume $\mathbf G$ is the strong local-global limit of a	 sequence $\seq{G}=(G_n)_{n\in\mathbb N}$ of graphs. 
Almost all neighborhoods  in $G_n$ (for $n$ large) look the same: color red all vertices with two adjacent neighbors and color green all the vertices having exactly two non red neighbors. 

\begin{center}
	\includegraphics[width=.7\textwidth]{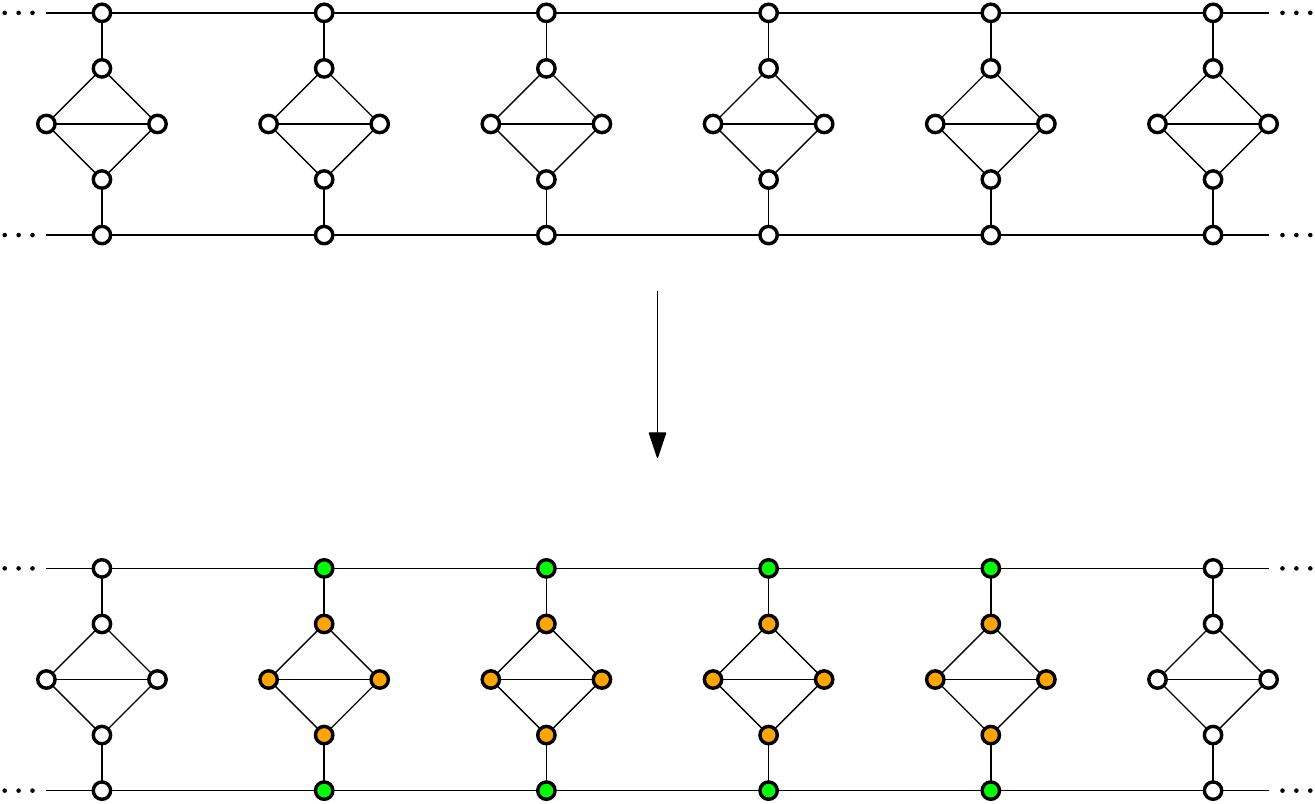}
\end{center}

Then, apart from a negligible set of vertices all the vertices are colored red and green. Moreover, almost all green vertices belong either to a long green cycle or a long green path. Recolor the green vertices in blue, green, and purple by dividing all these paths and cycles in almost equal parts (and taking care of globally balancing these colors).
	Now consider a local convergent subsequence of the colored $G_n$. By definition this local subsequence can be extended into a local convergent sequence $\seq{G^+}$ of colorings of the graphs in $\seq{G}$.
	By local convergence, every connected component of the sub-graphing induced by green, blue, and violet vertices are monochromatic (apart from a $0$ measure set), and these components are invariant by the transformation $(x,y)\mapsto (x+\alpha\bmod 1,y)$ (hence $y\in\{1,6\}$. However, this sub-graphing has two ergodic components, each of measure $1/6$ although each of the blue, green, and violet color contain asymptotically $1/9$ of the vertices, a contradiction.
\end{example}

\section{Applications}
\subsection{Clustering}
\label{sec:cluster}
Monadic lifts (i.e. lifts by unary relations) were considered in \cite{Loclim} in the context of continuous clustering of the structures in an ${\rm FO}^{\rm local}$-convergent sequence. One of the main results (see Theorem~\ref{thm:cluster} bellow) expresses that every ${\rm FO}^{\rm local}$-convergent sequence has monadic lift tracing components while preserving  ${\rm FO}^{\rm local}$-convergence. This will be refined in this section under the stronger assumption of $\mathsf{Sh}_*({\rm FO}^{\rm local})$-convergence (see Theorem~\ref{thm:newcluster}).

 \begin{figure}[ht]
 	\begin{center}
\includegraphics[width=\textwidth]{}
\caption{Typical shape of a structure  continuously segmented by a clustering: dense spots correspond to globular clusters, and the background to the residual cluster.
Biggest globular clusters appear first and then move apart from each other, while new (smaller) globular clusters appear and residual cluster becomes sparser and sparser.}
\label{fig:milk}
 	\end{center}
 \end{figure}

The analysis in \cite{Loclim} 
leads to interesting notions:
  {\em globular cluster} (corresponding to a limit non-zero measure connected component), {\em residual cluster} (corresponding to all the zero-measure connected components taken as a whole), and {\em negligible cluster} (corresponding to the stretched part connecting the other clusters, which eventually disappears at the limit).

Negligible sets  intuitively  correspond to parts of the graph  one can  remove, without a great modification of the statistics of the graph: A sequence $\Xseq{X}\subseteq\Xseq{A}$ is
{\em negligible} in a local-convergent sequence $\seq{A}$ if
$$
\forall d\in\bbbn:\quad \limsup_{n\rightarrow\infty}\nu_{\mathbf A_n}(\ball[d]{\mathbf{A}_n}(X_n))=0.
$$
This we simply formulate as
$\forall d\in\bbbn:\ \limsup\nu_{\seq{A}}(\ball[d]{\seq{A}}(\Xseq{X}))=0$.

Two sequences $\Xseq{X}$ and $\Xseq{Y}$ of subsets are {\em equivalent}
in $\seq{A}$  if the sequence $\Xseq{X}\,\Delta\,\Xseq{Y}=(X_n\,\Delta\,Y_n)_{n\in\bbbn}$ is negligible in $\seq{A}$. This will be denoted by $\Xseq{X}\approx \Xseq{Y}$.
We denote by $\Xseq{0}$ the sequence of empty subsets. Hence $\Xseq{X}\approx\Xseq{0}$ is equivalent to the property that $\Xseq{X}$ is negligible.
We further define a partial order on sequences of subsets by $\Xseq{X}\preceq \Xseq{Y}$ if the sequence 
$\Xseq{X}\setminus \Xseq{Y}=(X_n\setminus Y_n)_{n\in\bbbn}$ is negligible in $\seq{A}$. Hence $\preceq$ has $\Xseq{0}$ for its minimum and $\Xseq{X}\approx\Xseq{Y}$ iff $\Xseq{X}\preceq \Xseq{Y}$ and $\Xseq{Y}\preceq \Xseq{X}$.

The notion of cluster of a local-convergent sequence  is a weak analog of the notion of union of connected components, or more precisely of the topological notion of ``clopen subset'':
A sequence $\Xseq{X}$ of subsets of a local-convergent sequence $\seq{A}$ is a 
{\em cluster} of  $\seq{A}$ if the following conditions hold:
\begin{enumerate}
\item the lifted sequence $\lift[\Xseq{X}]{\seq{A}}$ obtained by marking set $X_n$ in $\mathbf{A}_n$ by a new unary relation $M_{\Xseq{X}}$
is local-convergent;
\item the sequence $\partial_{\seq{A}}\Xseq{X}$ is negligible in $\seq{A}$.
\end{enumerate}

Condition (1) can be seen as a continuity requirement for the subset selection.
Condition (2) is stronger than the usual requirement that there are not too many connections leaving the cluster. We intuitively require that the (asymptotically arbitrarily large) ring around a cluster is a very sparse zone. 

A cluster $\Xseq{X}$ is
{\em atomic} if, for every cluster $\Xseq{Y}$ of $\seq{A}$ such that $\Xseq{Y}\preceq \Xseq{X}$ either $\Xseq{Y}\approx\Xseq{0}$ or $\Xseq{Y}\approx \Xseq{X}$; the cluster $\Xseq{X}$ is {\em strongly atomic} if $\Xseq{X}_f$ is an atomic cluster of $\seq{A}_f$ for every increasing function $f:\bbbn\rightarrow\bbbn$.
To the opposite, the cluster $\Xseq{X}$ is a {\em nebula}
if, for every increasing function $f:\bbbn\rightarrow\bbbn$, every atomic cluster $\Xseq{Y}_f$ of $\seq{A}_f$ with $\Xseq{Y}_f\subseteq \Xseq{X}_f$ is trivial (i.e. $\Xseq{Y}_f\approx\Xseq{0}$).
Finally,
a cluster $\Xseq{X}$ is {\em universal} for  $\seq{A}$ if $\Xseq{X}$ is a cluster of every conservative lift of $\seq{A}$.

Two clusters $\Xseq{X}$ and $\Xseq{Y}$ of a local-convergent
sequence $\seq{A}$ are {\em interweaving}, and we note
$\Xseq{X}\between \Xseq{Y}$ if 
every sequence $\Xseq{Z}$ with $Z_n\in\{X_n,Y_n\}$ is a cluster of 
$\seq{A}$.

	We say that two clusters $\Xseq{C_1}$ and $\Xseq{C_2}$ are 
	\begin{itemize}
		\item {\em weakly disjoint} if $\Xseq{C_1}\cap\Xseq{C_2}\approx\Xseq{0}$;
		\item {\em disjoint} if $\Xseq{C_1}\cap\Xseq{C_2}=\Xseq{0}$;
		\item {\em strongly disjoint} if $(\ball{\seq{A}}(\Xseq{C_1})\cap\Xseq{C_2})\cup(\Xseq{C_1}\cap\ball{\seq{A}}(\Xseq{C_2}))=\Xseq{0}$.
	\end{itemize} 

A cluster $\Xseq{C}$ of a
 local-convergent sequence $\seq{A}$ is {\em globular} if, for every $\epsilon>0$ there exists $d\in\bbbn$ such that
$$\adjustlimits\liminf_{n\rightarrow\infty}\sup_{v_n\in C_n}
\nu_{\mathbf A_n}(\ball[d]{\mathbf A_n}(v_n))>(1-\epsilon)\lim_{n\rightarrow\infty}\nu_{\mathbf A_n}(C_n).$$

In other words, a cluster $\Xseq{C}$ is globular if, for every $\epsilon>0$ and sufficiently large $n$, $\epsilon$-almost all elements of $C_n$ are included in some ball of radius at most $d$ in $C_n$, for some fixed $d$. (Note that for a cluster $\Xseq{C}$ and $v_n\in C_n$, considering $\nu_{\mathbf A_n}(\ball[d]{\mathbf A_n}(v_n))$ or $\nu_{\mathbf A_n}(\ball[d]{\mathbf A_n[C_n]}(v_n))$ makes asymptotically no difference.)
Every globular cluster is clearly strongly atomic, but the converse does not hold as witnessed, for instance, by sequence of expanders. The strongly atomic clusters that are not globular are called {\em open clusters}.
Opposite to globular clusters are residual clusters:
	A cluster $\Xseq{X}$ of $\seq{A}$ is {\em residual} if 
for every $d\in\bbbn$ it holds
$$
\adjustlimits\limsup_{n\rightarrow\infty}\sup_{v_n\in A_n}
\nu_{\mathbf A_n}(\ball[d]{\mathbf A_n}(v_n))=0.
$$

 \begin{theorem}[\cite{Loclim}]
 	\label{thm:cluster}
 	Let $\seq{A}$ be a local convergent sequence of $\sigma$-structures. Then there exists a signature
  $\sigma^+$ obtained from $\sigma$ by the addition of countably many unary symbols $M_R$ and $M_{i,j}$ ($i\in\bbbn$, $1\leq j\leq N_i$) and a clustering $\seq{A}^+$ of $\seq{A}$ with the following properties:
 	\begin{itemize}
 		\item For every $i\in\bbbn$, $\bigcup_{j=1}^{N_i}M_{i,j}(\seq{A})$  is a universal cluster; 
 		\item For every $i\in\bbbn$ and every $1\leq j\leq N_i$, $M_{i,j}(\seq{A})$  is a globular cluster; 
 		\item Two clusters $M_{i,j}(\seq{A})$ and $M_{i',j'}(\seq{A})$ are interweaving if and only  if $i=i'$;
 		\item $M_R(\seq{A})$  is a residual cluster.
 	\end{itemize}
 \end{theorem}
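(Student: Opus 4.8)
The plan is to read the clustering off the limit object and pull it back to the finite structures by a diagonal construction, in the spirit of \cite{Loclim}. First I would use the representation of local limits (Section~\ref{sec:rep}): local convergence of $\seq{A}$ controls, for every radius $d$, the limiting behaviour of the rooted connected-component structure of a uniform random $v_n\in A_n$, and in particular identifies which connected components of $\mathbf A_n$ carry positive limit measure. Writing $\beta_d^{(n)}(v)=\nu_{\mathbf A_n}(\ball[d]{\mathbf A_n}(v))$ and $\beta_\infty(v)=\lim_d\beta_d^{(n)}(v)$ (the normalized size of the component of $v$), the positive limit values $c_1>c_2>\cdots$ of $\beta_\infty$ form an at most countable set with $\sum_k(\text{number of components of value }c_k)\,c_k\le 1$; for each such value $c$ and each limiting local isomorphism type of a component concentrated in some fixed-radius ball, the number of components realizing that type-and-value is a positive integer. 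I would then fix an enumeration of the pairs (concentrated component type, value $c_k$) that actually occur: the $i$-th pair is a \emph{level}, with finite multiplicity $N_i$, a radius $d_i$ witnessing that level-$i$ components are $\epsilon$-concentrated in a $d_i$-ball, and a threshold $t_i\in(c_{i+1},c_i)$ which is not a limit value of $\beta_\infty$.

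Next I would build the lift $\seq{A}^+$. With a parameter $d(n)\to\infty$, in $\mathbf A_n$ I mark by $M_{i,j}$ the vertices of the $j$-th connected component (in a fixed canonical order of the eligible components, e.g.\ by least vertex label) whose radius-$d(n)$ neighbourhood profile and normalized size match level $i$ within a shrinking tolerance, and mark by $M_R$ every vertex $v$ with $\beta_{d(n)}^{(n)}(v)<\epsilon(n)$, where $\epsilon(n)\to 0$ and $d(n)$ is taken at a scale where the radius-$d$ ball masses have essentially stabilized. The rates $d(n),\epsilon(n)$ are fixed by diagonalization so that, level by level, the marking settles once $n$ is large (the multiplicity has reached $N_i$ and $t_i$ has become safe) and so that every Stone pairing $\langle\psi,\mathbf A_n^+\rangle$ with $\psi\in{\rm FO}^{\rm local}(\sigma^+)$ converges.

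It then remains to verify the four properties. Local-convergence of $\seq{A}^+$ follows from that of $\seq{A}$ since each mark is decided by bounded-but-unboundedly-large local data whose limiting distribution is governed by the limit object. Each $M_{i,j}(\seq A)$ is globular directly from the choice of $d_i$ (asymptotically it is a single component whose mass lies in a $d_i$-ball, which is the defining inequality), and it is a cluster because a union of (almost-)whole components has edge-boundary empty up to a negligible set; likewise $\bigcup_{j\le N_i}M_{i,j}(\seq A)$ is a cluster. That the latter is \emph{universal} rests on it being, asymptotically, a union of connected components of $\mathbf A_n$: it is ``clopen'' for the local topology — empty boundary, membership decided by arbitrarily-large-but-bounded local information — so marking it preserves local-convergence on top of any conservative lift, which is exactly the requirement. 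For interweaving, if $i=i'$ the $N_i$ level-$i$ components are $\equiv_{{\rm FO}^{\rm local}}$-indistinguishable once marked (same limit neighbourhood distribution, same limit measure), so any sequence $\Xseq Z$ with $Z_n\in\{M_{i,j}(\mathbf A_n),M_{i,j'}(\mathbf A_n)\}$ is again asymptotically a union of level-$i$ components, hence a cluster; if $i\ne i'$ the two marked sequences have distinct limit measures or distinct limit types, so a mixed $\Xseq Z$ has oscillating statistics and is not even local-convergent. Finally $M_R(\seq A)$ is residual by $\epsilon(n)\to 0$ (each marked vertex has $\nu_{\mathbf A_n}(\ball[d]{\mathbf A_n}(v))<\epsilon(n)$ for every fixed $d$ once $n$ is large), and $\partial M_R$ is negligible because, with $d(n)$ at a stabilized scale, a boundary vertex of $M_R$ would witness a mass jump at radius $\approx d(n)$, which can happen only on a vanishing fraction of vertices.

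The step I expect to be the main obstacle is the joint control of the two rates: one must interleave countably many level-wise markings — each valid only past a level-dependent index — with the demand that the \emph{whole} lifted sequence over the infinite signature $\sigma^+$ be local-convergent, which forces $d(n)$ to grow and $\epsilon(n)$ to decay slowly enough to disturb no finitely-supported test formula yet fast enough that globular and residual parts genuinely separate at the limit and that $\partial M_R$ vanishes. The second delicate point is universality in the strong sense (a cluster of \emph{every} conservative lift, not merely of the one constructed), which hinges on the structural fact that unions of connected components are invisible to, yet compatible with, any further local-convergent refinement.
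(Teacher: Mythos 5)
First, a framing remark: the paper does not prove Theorem~\ref{thm:cluster} at all --- it is imported verbatim from \cite{Loclim} --- so your sketch has to be judged against the argument in that reference, whose main difficulty your construction does not engage.

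The central gap is that you identify the globular clusters with (suitably concentrated) connected components of $\mathbf A_n$, and all four verification steps lean on ``asymptotically a union of whole components, hence empty boundary.'' But globular clusters are in general \emph{proper subsets} of connected components, separated from the rest by a stretched, asymptotically negligible bridge --- exactly the ``negligible cluster'' phenomenon the paper describes next to Figure~\ref{fig:milk}. Concretely, let $\mathbf A_n$ be two cliques of order about $n/2$ joined by a path of length $2\sqrt n$. There is a single connected component, of measure $1$, which is not concentrated in any bounded-radius ball, so your level enumeration finds no eligible component: nothing is marked $M_{i,j}$, while a clique vertex $v$ has $\nu_{\mathbf A_n}(\ball[1]{\mathbf A_n}(v))\approx 1/2$ and therefore cannot be put into $M_R$ either (and if it were, $M_R$ would fail to be residual). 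The correct decomposition has two globular clusters of measure $1/2$ each, and to make either one a cluster you must cut in the middle of the path: cutting at the clique boundary produces a $\partial_{\seq{A}}\Xseq{X}$ whose $1$-neighbourhood already has measure about $1/2$, which is not negligible. So the marking rule cannot be ``which component does $v$ lie in''; it must be ``does some radius-$d$ ball around $v$ carry mass above a threshold,'' with $d$ and the threshold chosen inside a gap of the limiting ball-mass spectrum so that the heavy balls glue into finitely many well-separated spots whose boundaries land in the stretched negligible zone. That spectral-gap argument is where the real work of \cite{Loclim} lies, and it also underlies universality: $\bigcup_{j}M_{i,j}(\seq{A})$ is a cluster of \emph{every} conservative lift not because it is a union of components (it usually is not) but because it is, up to a negligible set, canonically determined by the ball-mass statistics of $\seq{A}$ itself and hence invisible to any further local-convergent refinement.

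The secondary machinery you propose is sound in outline: the diagonalization over levels is the right device for making the full $\sigma^+$-lift local-convergent, and the interweaving verification (same profile when $i=i'$; an oscillating mixed choice destroys convergence when $i\ne i'$, which suffices since interweaving quantifies over all mixed sequences) matches the profile lemma quoted later in Section~\ref{sec:cluster}. But none of these verifications become available until the boundary-negligibility issue above is resolved, so as written the proposal does not establish the theorem.
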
	
 
This structural theorem is assuming the local convergence of the sequence. If we assume local-global convergence we get stronger results (Theorem~\ref{thm:newcluster} bellow) involving expanding properties which we will define now. This is pleasing as the decomposition into expanders was one of the motivating examples \cite{bollobas2011sparse} and \cite{hatami2014limits}.

The following is a sequential version of expansion property:
A structure $\mathbf{A}$ is {\em $(d,\epsilon,\delta)$-expanding} if, for every $X\subset A$ it holds
$$
\epsilon<\nu_{\mathbf A}(X)<1-\epsilon\quad\Longrightarrow\quad
\nu_{\mathbf A}(\ball[d]{\mathbf{A}}(X))>(1+\delta)\nu_{\mathbf A}(X).
$$
This condition may be reformulated as:
$$
\inf\biggl\{\frac{\nu_{\mathbf A}(\ball[d]{\mathbf{A}}(X)\setminus X)}{\nu_{\mathbf A}(X)}:\quad
\epsilon<\nu_{\mathbf A}(X)<1-\epsilon\biggr\}>\delta.
$$
Note that the left hand size of the above inequality
is similar to the {\em magnification} introduced in \cite{Alon1986}, which is the isoperimetric constant $h_{\rm out}$ defined by
$$
h_{\rm out}=\inf\left\{\frac{|\ball{\mathbf{A}}(X)\setminus X|}{|X|}:\quad 0<\frac{|X|}{|A|}<1/2\right\}.
$$

A local-convergent sequence $\seq{A}$ is {\em expanding}
if, for every $\epsilon>0$ there exist $d,t\in\bbbn$ and $\delta>0$ such that every $\mathbf{A}_n$ with $n\geq t$ is $(d,\epsilon,\delta)$-expanding.
A non-trivial cluster $\Xseq{X}$ of $\seq{A}$ is 
{\em expanding} 
of $\seq{A}$ if
$\seq{A}[\Xseq{X}]$ is expanding.
We have the following equivalent formulations of this concept:

\begin{lemma}[\cite{Loclim}]
\label{lem:satom}
Let $\Xseq{X}\not\approx\Xseq{0}$ be a cluster of a local convergent sequence $\seq{A}$.
The following conditions are equivalent:
\begin{enumerate}
\item  $\Xseq{X}$ is an expanding cluster of $\seq{A}$;
\item for every $\epsilon>0$ there exists $d,t\in\bbbn$ such that for every $\Xseq{Z}\subseteq\Xseq{X}$ with $\nu_{\seq{A}}(\Xseq{Z})>\epsilon\nu_{\seq{A}}(\Xseq{X})$ it holds 
$$\nu_{\seq{A}}(\ball[d]{\seq{A}}(\Xseq{Z}))>(1-\epsilon)\nu_{\seq{A}}(\Xseq{X});$$
\item the sequence $\Xseq{X}$ is a strongly atomic cluster of $\seq{A}$;
\item for every $\epsilon>0$ there exists no $\Xseq{Y}\subseteq\Xseq{X}$ such that $\partial_{\seq{A}}\Xseq{Y}\approx\Xseq{0}$ and
$$
\epsilon<\liminf \nu_{\seq{A}}(\Xseq{Y})<\lim \nu_{\seq{A}}(\Xseq{X})-\epsilon.
$$
\end{enumerate}
\end{lemma}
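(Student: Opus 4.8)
The plan is to establish the lemma through the cycle of implications $(1)\Rightarrow(2)\Rightarrow(4)\Rightarrow(3)\Rightarrow(1)$ (the short converses $(2)\Rightarrow(1)$ and $(3)\Rightarrow(4)$ then come almost for free). Throughout I would fix $a:=\lim_n\nu_{\mathbf A_n}(X_n)$, which exists and is positive because $\Xseq{X}$ is a non-trivial cluster (so its marking lift $\lift[\Xseq{X}]{\seq{A}}$ is local-convergent). Two normalizations keep the bookkeeping uniform: for $Z_n\subseteq X_n$ the ambient density $\nu_{\mathbf A_n}(Z_n)$ and the relative density $\nu_{\mathbf A_n[X_n]}(Z_n)$ differ exactly by the factor $\nu_{\mathbf A_n}(X_n)\to a$; and since $\partial_{\seq{A}}\Xseq{X}$ is negligible, a shortest path of $\mathbf A_n$ from a vertex to $X_n$ either stays inside $X_n$ (hence is a path of the induced structure $\mathbf A_n[X_n]$) or meets $\partial_{\seq{A}}\Xseq{X}$, so for subsets of $\Xseq{X}$ the balls $\ball[d]{\mathbf A_n}(\cdot)$ and $\ball[d]{\mathbf A_n[X_n]}(\cdot)$ agree up to a negligible set. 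I would use both silently.

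For $(1)\Rightarrow(2)$ I would iterate expansion: given the target $\epsilon$ for $(2)$, pick $d_0,\delta,t$ witnessing $(d_0,\epsilon',\delta)$-expansion of the $\mathbf A_n[X_n]$ ($n\ge t$) for some $\epsilon'<\epsilon$; each $d_0$-ball multiplies the relative density by $(1+\delta)$ while it stays below $1-\epsilon'$, so after $k=\lceil\log_{1+\delta}(1/\epsilon)\rceil$ steps any $Z_n$ of relative density $>\epsilon$ has its $(kd_0)$-ball of relative density $>1-\epsilon'\ge 1-\epsilon$, which after denormalizing is the conclusion of $(2)$ with $d=kd_0$. Conversely, feeding $(2)$ a small $\eta<\epsilon$ and setting $\delta=\frac{\epsilon-\eta}{1-\epsilon}$ converts $\nu_{\mathbf A_n[X_n]}(\ball[d]{\mathbf A_n[X_n]}(Z_n))>1-\eta=(1+\delta)(1-\epsilon)$ into $(d,\epsilon,\delta)$-expansion, giving $(2)\Rightarrow(1)$. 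For $(2)\Rightarrow(4)$, suppose $\Xseq{Y}\subseteq\Xseq{X}$ has $\partial_{\seq{A}}\Xseq{Y}\approx\Xseq{0}$; then for each fixed $d$ the set $\ball[d]{\mathbf A_n}(Y_n)\setminus Y_n$ lies in the $d$-neighborhood of $\partial_{\seq{A}}\Xseq{Y}$ and is negligible, so $\nu_{\mathbf A_n}(\ball[d]{\mathbf A_n}(Y_n))=\nu_{\mathbf A_n}(Y_n)+o(1)$. If in addition $\epsilon<\liminf_n\nu_{\mathbf A_n}(Y_n)<a-\epsilon$, then along the $\liminf$-subsequence this is $<a-\frac{\epsilon}{2}$ for large $n$, contradicting the conclusion $\nu_{\mathbf A_n}(\ball[d]{\mathbf A_n}(Y_n))>(1-\eta)\nu_{\mathbf A_n}(X_n)\to(1-\eta)a>a-\epsilon$ of $(2)$ applied with $\eta<\epsilon/a$ (padding $Y_n$ by $X_n$ at the finitely many indices where its relative density falls below $\eta$, so that the hypothesis of $(2)$ holds for all $n\ge t$).

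For $(4)\Rightarrow(3)$: if $\Xseq{X}$ were not strongly atomic there would be an increasing $f$ and a cluster $\Xseq{Y}_f$ of $\seq{A}_f$ with $\Xseq{Y}_f\preceq\Xseq{X}_f$, $\Xseq{Y}_f\not\approx\Xseq{0}$, $\Xseq{Y}_f\not\approx\Xseq{X}_f$; replacing it by $\Xseq{Y}_f\cap\Xseq{X}_f$ I may assume $\Xseq{Y}_f\subseteq\Xseq{X}_f$. Its marking limit $b:=\lim_n\nu_{\mathbf A_{f(n)}}(Y_{f(n)})$ exists; the negligibility of $\partial_{\seq{A}_f}\Xseq{Y}_f$ forces $b>0$ (otherwise the ball estimate above would make $\Xseq{Y}_f$ negligible), and since $\partial_{\seq{A}_f}(\Xseq{X}_f\setminus\Xseq{Y}_f)$ is also negligible (being contained in bounded neighborhoods of $\partial_{\seq{A}_f}\Xseq{X}_f$ and $\partial_{\seq{A}_f}\Xseq{Y}_f$) the same estimate and $\Xseq{Y}_f\not\approx\Xseq{X}_f$ force $b<a$. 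Now I would pad back to $\seq{A}$ by setting $Y'_m=Y_m$ for $m\in f(\bbbn)$ and $Y'_m=X_m$ otherwise: then $\Xseq{Y}'\subseteq\Xseq{X}$, $\partial_{\seq{A}}\Xseq{Y}'$ is negligible (it is $\partial_{\seq{A}_f}\Xseq{Y}_f$ along $f$ and $\partial_{\seq{A}}\Xseq{X}$ elsewhere, both negligible), and $\liminf_m\nu_{\mathbf A_m}(Y'_m)=b\in(0,a)$, so with $\epsilon=\frac{1}{2}\min(b,a-b)$ this contradicts $(4)$. For $(3)\Rightarrow(1)$: if $\seq{A}[\Xseq{X}]$ is not expanding there is $\epsilon>0$ such that for every $k$ some $\mathbf A_{n_k}[X_{n_k}]$ ($n_k$ strictly increasing) fails $(k,\epsilon,1/k)$-expansion, witnessed by $Z_{n_k}\subseteq X_{n_k}$ with $\epsilon<\nu_{\mathbf A_{n_k}[X_{n_k}]}(Z_{n_k})<1-\epsilon$ and $\nu_{\mathbf A_{n_k}[X_{n_k}]}(\ball[k]{\mathbf A_{n_k}[X_{n_k}]}(Z_{n_k}))\le(1+1/k)\,\nu_{\mathbf A_{n_k}[X_{n_k}]}(Z_{n_k})$. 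After passing to a further subsequence $f$ along which the lift marking $\Xseq{Z}$ is ${\rm FO}^{\rm local}$-convergent and $\nu_{\mathbf A_{f(k)}[X_{f(k)}]}(Z_{f(k)})$ converges to some $c\in[\epsilon,1-\epsilon]$, the sequence $\Xseq{W}:=\Xseq{Z}_f$ is a cluster of $\seq{A}_f$: its marking lift is local-convergent by construction, and for each fixed $d$ the set $\ball[d]{\mathbf A_{f(k)}[X_{f(k)}]}(W_k)\setminus W_k$ has relative density $\le 1/k\to 0$ once $k\ge d$, which together with the negligibility of $\partial_{\seq{A}}\Xseq{X}$ (and the stability of negligibility under bounded neighborhoods) makes $\partial_{\seq{A}_f}\Xseq{W}$ negligible. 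Finally $\nu_{\mathbf A_{f(k)}}(W_k)\to ca>0$ and $\nu_{\mathbf A_{f(k)}}(X_{f(k)}\setminus W_k)\to(1-c)a>0$ give $\Xseq{W}\not\approx\Xseq{0}$ and $\Xseq{W}\not\approx\Xseq{X}_f$, so $\Xseq{X}_f$ is not an atomic cluster of $\seq{A}_f$, contradicting $(3)$.

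The step I expect to be the main obstacle is the boundary bookkeeping underlying the last two implications: whenever a new set is manufactured — a complement $\Xseq{X}\setminus\Xseq{Y}$, a set $\Xseq{Z}$ read off from a failure of expansion, or a ball taken in $\mathbf A_n$ rather than in $\mathbf A_n[X_n]$ — one has to certify that its boundary is again negligible and that (after a subsequence) its marking lift is local-convergent, so that it is a genuine cluster to which strong atomicity applies. The two facts that make this routine rather than delicate are that negligibility is defined through $d$-neighborhoods for \emph{all} $d$, hence is preserved under passing to a further bounded neighborhood and under finite unions, and that a shortest path of $\mathbf A_n$ from a vertex into $X_n$ either lies entirely inside $X_n$ (so is a path of the induced substructure) or crosses $\partial_{\seq{A}}\Xseq{X}$. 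A secondary nuisance is the mismatch between the ``for all $n\ge t$'' quantifier in $(2)$ and the conditions in $(4)$ and in the failure of expansion, which hold only cofinally or along a subsequence; this is absorbed by padding subset sequences with $\Xseq{X}$ at the finitely many offending indices, which disturbs neither the relevant limits nor negligibility.
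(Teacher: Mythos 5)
A preliminary remark: the paper states this lemma with a citation to \cite{Loclim} and contains no proof of it, so your argument can only be measured against the definitions, not against an in-paper proof. Your overall plan --- the cycle $(1)\Rightarrow(2)\Rightarrow(4)\Rightarrow(3)\Rightarrow(1)$, the two normalizations (relative versus ambient density, induced versus ambient balls up to $\ball[d]{\seq{A}}(\partial_{\seq{A}}\Xseq{X})$), and the padding device for reconciling subsequence statements with full-sequence statements --- is the natural one, and the implications $(1)\Leftrightarrow(2)$, $(2)\Rightarrow(4)$ and $(4)\Rightarrow(3)$ are in order.

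There is, however, a genuine gap in $(3)\Rightarrow(1)$, at the point where you declare $\Xseq{W}:=\Xseq{Z}_f$ to be a cluster. The failure of $(k,\epsilon,1/k)$-expansion controls only the \emph{outer} growth, $\nu_{\mathbf A_n[X_n]}\bigl(\ball[d]{\mathbf A_n[X_n]}(W_n)\setminus W_n\bigr)\le\nu_{\mathbf A_n[X_n]}(W_n)/k$ for $d\le k$; but negligibility of $\partial_{\seq{A}_f}\Xseq{W}$ requires $\nu(\ball[d]{\seq{A}_f}(\partial\Xseq{W}))\to 0$ for every $d$, and $\partial W_n$ contains the \emph{inner} boundary --- the elements of $W_n$ lying in crossing tuples --- whose $d$-neighbourhoods re-enter $W_n$ and are not controlled by the outer bound. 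Concretely, if a single vertex $v\in X_n\setminus W_n$ is adjacent to all of $W_n$ while the rest of $X_n$ lies at distance greater than $k$ from $W_n$, then $\ball[k]{\mathbf A_n[X_n]}(W_n)=W_n\cup\{v\}$, so expansion fails as badly as you like, yet $\partial W_n\supseteq W_n$ and every $\ball[d]{\mathbf A_n}(\partial W_n)$ has measure at least $\nu_{\mathbf A_n}(W_n)$. Since the whole point of this framework is to drop the bounded-degree hypothesis (under which $\nu(\ball[d]{\mathbf A_n}(S))\le D^d\nu(S)$ and your inference would be fine), this cannot be waved away; the same conflation of ``measure tending to $0$'' with ``negligible'' underlies your parenthetical appeal to ``stability of negligibility under bounded neighborhoods'', which holds for negligible sequences but not for sequences of small measure. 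The standard repair is to replace $Z_{n_k}$ by its thickening $W'_k$, the ball of radius $r:=\lfloor k/2\rfloor$ around $Z_{n_k}$ in $\mathbf A_{n_k}[X_{n_k}]$: any point within distance $d$ of both $W'_k$ and its complement in $X_{n_k}$ then lies in an annulus contained in $\ball[k]{\mathbf A_{n_k}[X_{n_k}]}(Z_{n_k})\setminus Z_{n_k}$ as soon as $k>2d+2$, whose relative measure is at most $\nu(Z_{n_k})/k\to 0$, so the boundary of $\Xseq{W}'$ (together with the contribution of $\partial_{\seq{A}}\Xseq{X}$, which you already handle) is genuinely negligible; and since $\nu(W'_k)-\nu(Z_{n_k})\to 0$, all the limit computations at the end of your argument survive unchanged. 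With this single modification the proof goes through.
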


Note that for local-global convergent sequences, the notions of atomic, strongly atomic, and expanding clusters are equivalent.

The case of bounded degree graphs is particularly interesting and our definitions capture this as well.
Recall that a sequence $\seq{G}$ of graphs is a {\em vertex expander} if there exists  $\alpha>0$ such that 
$
\liminf h_{\rm out}(G_n)\geq\alpha
$.
(For more information on expanders we refer the reader to \cite{Hoory2006}.)

\begin{lemma}[\cite{Loclim}]
	Let $\seq{G}$ be a sequence of graphs with maximum degree at most $\Delta$ and let $\Xseq{C}\not\approx\Xseq{0}$ be a cluster of $\seq{G}$. The following are equivalent:
	\begin{itemize}
		\item $\Xseq{C}$ is an expanding cluster;
		\item for every $\epsilon>0$ there exists $\Xseq{X}\subseteq \Xseq{C}$ such that for every $n\in\bbbn$ it holds $|X_n|<\epsilon |C_n|$ and $\seq{G}[\Xseq{C}\setminus \Xseq{X}]$ is a vertex expander.
	\end{itemize}
\end{lemma}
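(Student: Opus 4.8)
The plan is to prove the two implications separately, both reductions ultimately resting on two elementary facts about a finite graph $H$ of maximum degree at most $\Delta$. First, if $h_{\rm out}(H)\ge\alpha$, then for \emph{every} $S\subseteq V(H)$ one has $|\ball{H}(S)\setminus S|\ge\frac{\alpha}{1+\alpha}\min(|S|,|V(H)|-|S|)$: for $|S|<|V(H)|/2$ this is the definition of $h_{\rm out}$, and for $|S|\ge|V(H)|/2$ one applies $h_{\rm out}$ to the set $B$ of vertices at distance at least two from $S$, using $\ball{H}(B)\setminus B\subseteq\ball{H}(S)\setminus S$ and $|B|\le|V(H)|-|S|$ (the sole borderline case $|S|=|V(H)|/2$ with $\ball{H}(S)=S$ is excluded for large $n$, since then $h_{\rm out}(H)\to 0$). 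Second, because $H$ has bounded degree, $\ball[d]{H}(S)\setminus S\subseteq\bigcup_{w\in\ball{H}(S)\setminus S}\ball[d-1]{H}(w)$ — take a shortest path from a vertex of $\ball[d]{H}(S)\setminus S$ to $S$ and look at its last vertex not in $S$ — whence $|\ball[d]{H}(S)\setminus S|\le M_{\Delta,d}\,|\ball{H}(S)\setminus S|$ with $M_{\Delta,d}=1+\Delta+\dots+\Delta^{d-1}$. This second fact is what lets us trade a scale-$d$ expansion bound for a scale-$1$ one, and it is the only place the hypothesis $\Delta<\infty$ enters. Throughout, $\ball{\cdot}$ is computed inside the relevant induced subgraph $G_n[C_n]$ or $G_n[C_n\setminus X_n]$, matching the definitions of $(d,\epsilon,\delta)$-expansion and of $h_{\rm out}$.

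\emph{The second condition implies that $\Xseq C$ is an expanding cluster.} Fix $\epsilon>0$ and apply the hypothesis with $\epsilon/3$: there is $\Xseq X\subseteq\Xseq C$ with $|X_n|<(\epsilon/3)|C_n|$ for all $n$, together with $\alpha>0$ and an index $t$ so that $h_{\rm out}(G_n[C_n\setminus X_n])\ge\alpha$ for $n\ge t$. Given $Y\subseteq C_n$ with $\epsilon<|Y|/|C_n|<1-\epsilon$ and $n\ge t$, put $Z=Y\setminus X_n\subseteq C_n\setminus X_n$; a direct count gives $|Z|>(2\epsilon/3)|C_n|$ and $|(C_n\setminus X_n)\setminus Z|>(2\epsilon/3)|C_n|$, so the first fact yields $|\ball{G_n[C_n\setminus X_n]}(Z)\setminus Z|\ge\beta(2\epsilon/3)|C_n|$ with $\beta=\frac{\alpha}{1+\alpha}$. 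Every vertex of that boundary lies in $C_n\setminus Y$ and is adjacent in $G_n[C_n]$ to a vertex of $Z\subseteq Y$, hence lies in $\ball{G_n[C_n]}(Y)\setminus Y$; since $|Y|<|C_n|$ this gives $|\ball{G_n[C_n]}(Y)|>(1+\delta)|Y|$ for $\delta=\beta\epsilon/2$. Thus $G_n[C_n]$ is $(1,\epsilon,\delta)$-expanding for all $n\ge t$, and as $\epsilon$ was arbitrary, $\seq G[\Xseq C]$ is expanding.

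\emph{If $\Xseq C$ is an expanding cluster, the second condition holds.} This is the harder direction. Given the target $\epsilon>0$, set $\epsilon_0=\min(\epsilon/2,1/8)$, apply expansion of $\seq G[\Xseq C]$ with $\epsilon_0$ to obtain $d,t_0\in\bbbn$ and $\delta>0$ with $G_n[C_n]$ being $(d,\epsilon_0,\delta)$-expanding for $n\ge t_0$, and put $\alpha=\delta/(2M_{\Delta,d})$. For $n\ge t_0$ define $X_n$ by a trimming process on $H:=G_n[C_n]$: while some nonempty $T\subseteq V(H)$ satisfies $|T|\le|V(H)|/2$ and $|\ball{H}(T)\setminus T|<\alpha|T|$, replace $H$ by $H-T$; let $X_n$ be the set of all removed vertices (and $X_n=\emptyset$ for $n<t_0$). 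The process terminates, and the final graph $G_n[C_n\setminus X_n]$ contains no such bad set, i.e.\ $h_{\rm out}(G_n[C_n\setminus X_n])\ge\alpha$, so $\seq G[\Xseq C\setminus\Xseq X]$ is a vertex expander. To bound $|X_n|$, suppose $|X_n|>\epsilon_0|C_n|$; list the removed sets as $T_0,\dots,T_{k-1}$, set $H^0=G_n[C_n]$, $H^{i+1}=H^i-T_i$, and let $U=T_0\cup\dots\cup T_j$ where $j$ is least with $|T_0\cup\dots\cup T_j|>\epsilon_0|C_n|$; then $\epsilon_0|C_n|<|U|\le(\frac12+\epsilon_0)|C_n|<(1-\epsilon_0)|C_n|$. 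The point is that $U$ scarcely expands already at scale $1$ in $G_n[C_n]$: a vertex of $\ball{G_n[C_n]}(U)\setminus U$ is adjacent to some $T_i$ with $i\le j$, it is still present in $H^i$ and not in $T_i$, hence lies in $\ball{H^i}(T_i)\setminus T_i$; summing $|\ball{H^i}(T_i)\setminus T_i|<\alpha|T_i|$ over $i\le j$ gives $|\ball{G_n[C_n]}(U)\setminus U|<\alpha|U|$. By the bounded-degree fact, $|\ball[d]{G_n[C_n]}(U)\setminus U|<M_{\Delta,d}\,\alpha|U|=\frac{\delta}{2}|U|$, contradicting that $(d,\epsilon_0,\delta)$-expansion forces $|\ball[d]{G_n[C_n]}(U)|>(1+\delta)|U|$. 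Hence $|X_n|\le\epsilon_0|C_n|<\epsilon|C_n|$ for $n\ge t_0$, as required.

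The main obstacle is exactly this size estimate for the trimmed-off core: the key insight is that the union $U$ of the first few removed sets — each almost inert at scale $1$ \emph{relative to the graph present when it was removed} — is itself almost inert at scale $1$ in $G_n[C_n]$, and therefore, by bounded degree, almost inert at scale $d$, so the cluster's scale-$d$ expansion caps $|U|$. The rest is routine boundary-chasing, plus the harmless check (used above) that the degenerate case of the first fact cannot occur once $n$ is large.
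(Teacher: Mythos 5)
Your proof is correct. Note that the paper itself gives no argument for this lemma --- it is quoted from \cite{Loclim} --- so there is no internal proof to compare against; but your two-sided argument is sound: the forward direction via iteratively trimming sets of relative boundary below $\alpha=\delta/(2M_{\Delta,d})$ and bounding the total trimmed mass by playing the accumulated scale-$1$ inertness of the removed sets against the cluster's scale-$d$ expansion (using bounded degree to pass from scale $1$ to scale $d$), and the converse via the $\frac{\alpha}{1+\alpha}\min(|S|,|V(H)|-|S|)$ strengthening of $h_{\rm out}$, are exactly the right mechanisms, and your handling of the degenerate case $|S|=|V(H)|/2$ with ${\rm N}_H(S)=S$ is legitimate since $\Xseq{C}\not\approx\Xseq{0}$ forces $|C_n|\to\infty$.
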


We consider a fixed enumeration $\phi_1,\phi_2,\dots$ of ${\rm FO}^{\rm local}$.
The {\em profile} ${\rm Prof}(\Xseq{C})$ of a cluster $\Xseq{C}$ is the sequence formed by $\lim\nu_{\seq{A}}(\Xseq{C})$ followed by the values 
$\lim\langle\phi_i,\seq{A}[\Xseq{C}]\rangle$ for $i\in\mathbb N$. The lexicographic order on the profiles is denoted by $\leq$.

In \cite{Loclim} it was proved that two expanding clusters are either weakly disjoint or interweaving. We now prove a lemma with similar flavor.

\begin{lemma}
\label{lem:sa}
	Let $\Xseq{C}_1$ be an expanding cluster of a local-convergent sequence $\seq{A}$ and let $\Xseq{C}_2$ be a cluster of $\seq{A}$. 
	
	Then the limit set of  $\nu_{\seq{A}}(\Xseq{C}_1\cap\Xseq{C}_2)$ is included in $\{0,\lim \nu_{\seq{A}}(\Xseq{C}_1)\}$.
\end{lemma}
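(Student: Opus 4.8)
The plan is to realise $\Xseq{C}_1\cap\Xseq{C}_2$ as a subset sequence of $\Xseq{C}_1$ with negligible boundary and then to invoke condition~(4) of Lemma~\ref{lem:satom}, which precisely forbids such a subset sequence from having density strictly between $0$ and $\lim\nu_{\seq{A}}(\Xseq{C}_1)$. Write $L=\lim\nu_{\seq{A}}(\Xseq{C}_1)$, which exists because $\Xseq{C}_1$ is a cluster (its lift is local-convergent, so $\nu_{\mathbf A_n}(C_{1,n})$ converges). Since $\Xseq{C}_1\cap\Xseq{C}_2\subseteq\Xseq{C}_1$, every accumulation point $\beta$ of $\nu_{\seq{A}}(\Xseq{C}_1\cap\Xseq{C}_2)$ satisfies $0\le\beta\le L$, so it suffices to rule out $0<\beta<L$.

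The first step is the boundary estimate $\partial_{\seq{A}}(\Xseq{C}_1\cap\Xseq{C}_2)\subseteq\partial_{\seq{A}}\Xseq{C}_1\cup\partial_{\seq{A}}\Xseq{C}_2$ (componentwise): a vertex on the frontier of $C_{1,n}\cap C_{2,n}$ has, together with some Gaifman-neighbour, a disagreement on membership in $C_{1,n}\cap C_{2,n}$, hence a disagreement on membership in $C_{1,n}$ or in $C_{2,n}$, so it lies on $\partial_{\mathbf A_n}C_{1,n}$ or on $\partial_{\mathbf A_n}C_{2,n}$. As $\Xseq{C}_1$ and $\Xseq{C}_2$ are clusters, both $\partial_{\seq{A}}\Xseq{C}_1$ and $\partial_{\seq{A}}\Xseq{C}_2$ are negligible; since negligibility is preserved under passage to subsets and under componentwise unions (immediate from the definition and subadditivity of $\nu_{\mathbf A_n}$), it follows that $\partial_{\seq{A}}(\Xseq{C}_1\cap\Xseq{C}_2)\approx\Xseq{0}$.

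I would then argue by contradiction. Suppose $\beta$ is an accumulation point of $\nu_{\seq{A}}(\Xseq{C}_1\cap\Xseq{C}_2)$ with $0<\beta<L$, and pick an increasing $f:\bbbn\to\bbbn$ along which $\nu_{\mathbf A_{f(n)}}(C_{1,f(n)}\cap C_{2,f(n)})\to\beta$. The sequence $\seq{A}_f$ is local-convergent, $\Xseq{C}_{1,f}$ is still an expanding cluster of $\seq{A}_f$ and $\Xseq{C}_{2,f}$ a cluster of $\seq{A}_f$ (local convergence, negligibility of the relevant boundaries, and the $(d,\epsilon,\delta)$-expansion property all pass to subsequences), and by the previous step $\partial_{\seq{A}_f}(\Xseq{C}_{1,f}\cap\Xseq{C}_{2,f})\approx\Xseq{0}$. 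Taking $\Xseq{Y}=\Xseq{C}_{1,f}\cap\Xseq{C}_{2,f}\subseteq\Xseq{C}_{1,f}$ and $\epsilon=\tfrac12\min(\beta,L-\beta)>0$, one has $\partial_{\seq{A}_f}\Xseq{Y}\approx\Xseq{0}$ together with $\epsilon<\liminf\nu_{\seq{A}_f}(\Xseq{Y})=\beta<L-\epsilon=\lim\nu_{\seq{A}_f}(\Xseq{C}_{1,f})-\epsilon$, contradicting condition~(4) of Lemma~\ref{lem:satom} for the expanding cluster $\Xseq{C}_{1,f}$. Hence $\beta\in\{0,L\}$, which is the assertion. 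The step I expect to need the most care is the (routine but necessary) remark that being an expanding cluster is inherited by subsequences, so that Lemma~\ref{lem:satom}(4), stated for a full sequence, can be applied after the extraction; everything else amounts to unwinding the definitions of frontier and negligibility.
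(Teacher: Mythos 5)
Your proof is correct and follows essentially the same route as the paper's: bound $\partial_{\seq{A}}(\Xseq{C}_1\cap\Xseq{C}_2)$ by $\partial_{\seq{A}}\Xseq{C}_1\cup\partial_{\seq{A}}\Xseq{C}_2$, extract a subsequence realizing an intermediate accumulation point, and derive a contradiction from Lemma~\ref{lem:satom}. The only (immaterial) difference is that you invoke characterization~(4) of that lemma directly, while the paper passes through characterization~(3) (strong atomicity), after first refining the subsequence so that the lift by $\Xseq{C}_1\cap\Xseq{C}_2$ is local-convergent and hence a cluster.
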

\begin{proof}
	Let $\Xseq{X}=\Xseq{C_1}\cap\Xseq{C_2}$.
	Assume for contradiction that  there exists $0<\alpha<\lim \nu_{\seq{A}}(\Xseq{C}_1)$ and a subsequence $\seq{A}_f$ such that $\lift[\Xseq{X}_f]{\seq{A}_f}$ is local convergent and $\lim\nu_{\seq{A}_f}(\Xseq{X}_f)=\alpha$. As $\delta_{\seq{A}}\Xseq{X}\subseteq \delta_{\seq{A}}\Xseq{C}_1\cup \delta_{\seq{A}}\Xseq{C}_2$ we deduce that $\Xseq{X}_f$ is a cluster of $\seq{A}_f$. But $\Xseq{X}_f\subseteq (\Xseq{C}_1)_f$, $\Xseq{X}_f\not\approx\Xseq{0}$, and $\Xseq{X}_f\not\approx\Xseq{C}_1$ (as 
	$\lim\nu_{\seq{A}_f}(\Xseq{X}_f)\notin\{0,\lim \nu_{\seq{A}}(\Xseq{C}_1)\}$), what contradicts the hypothesis that  $\Xseq{C}_1$ is expanding hence strongly atomic (see Lemma~\ref{lem:satom}).
\end{proof}

The following lemma is a restated version of a Lemma proved in \cite{Loclim}.
\begin{lemma}
	Two non-negligible clusters $\Xseq{C_1}$ and $\Xseq{C_2}$ are interweaving if and only if ${\rm Profile}(\Xseq{C_1})={\rm Profile}(\Xseq{C_2})$.\qed
\end{lemma}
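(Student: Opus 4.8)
The plan is to establish the two implications separately, the whole argument resting on a single fact taken from \cite{Loclim} (of which the present lemma is the announced restatement): for a non-negligible cluster $\Xseq{C}$ of $\seq{A}$ --- for which $\seq{A}[\Xseq{C}]$ is local-convergent, so that ${\rm Profile}(\Xseq{C})$ is well defined --- the local limit of the marked lift $\lift[\Xseq{C}]{\seq{A}}$ depends on $\Xseq{C}$ only through ${\rm Profile}(\Xseq{C})$, and conversely ${\rm Profile}(\Xseq{C})$ can be read off from that limit. For the implication $\Xseq{C_1}\between\Xseq{C_2}\Rightarrow{\rm Profile}(\Xseq{C_1})={\rm Profile}(\Xseq{C_2})$ I would introduce the alternating sequence $\Xseq{Z}$ with $Z_n=(C_1)_n$ for $n$ even and $Z_n=(C_2)_n$ for $n$ odd. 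By interweaving $\Xseq{Z}$ is a cluster, so $\lift[\Xseq{Z}]{\seq{A}}$ is local-convergent; in particular $\nu_{\seq{A}}(\Xseq{Z})=\langle M_{\Xseq{Z}},\lift[\Xseq{Z}]{\seq{A}}\rangle$ converges, and its even-indexed subsequence tends to $\lim\nu_{\seq{A}}(\Xseq{C_1})>0$, so $\Xseq{Z}$ is non-negligible and $\seq{A}[\Xseq{Z}]$ is local-convergent. But then the even-indexed subsequences of $\nu_{\seq{A}}(\Xseq{Z})$ and of each $\langle\phi_i,\seq{A}[\Xseq{Z}]\rangle$ converge to the corresponding entries of ${\rm Profile}(\Xseq{C_1})$ and the odd-indexed ones to those of ${\rm Profile}(\Xseq{C_2})$; since the full sequences converge, the two profiles coincide.

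For the converse, assume ${\rm Profile}(\Xseq{C_1})={\rm Profile}(\Xseq{C_2})$ and write $L>0$ for the common value of $\lim\nu_{\seq{A}}(\Xseq{C_i})$. The key step is that $\lift[\Xseq{C_1}]{\seq{A}}$ and $\lift[\Xseq{C_2}]{\seq{A}}$ have one and the same local limit; this is exactly the fact quoted above, and I would justify it as follows. For a cluster $\Xseq{C}$, negligibility of $\partial_{\seq{A}}\Xseq{C}$ allows one to split the Stone pairing $\langle\phi,\lift[\Xseq{C}]{\seq{A}_n}\rangle$ of a local formula, up to an error $o(1)$, according to which of the free variables lie --- together with their whole ball of locality --- inside $C_n$ and which outside: the "inside" contributions are governed by the local statistics of $\seq{A}[\Xseq{C}]$, the "outside" ones by those of the complementary restriction $\seq{A}[\overline{\Xseq{C}}]$ (with $\overline{\Xseq{C}}=(A_n\setminus C_n)_{n}$), weighted by monomials in $L$ and $1-L$; and since $\seq{A}$ is, asymptotically, the disjoint union of $\seq{A}[\Xseq{C}]$ and $\seq{A}[\overline{\Xseq{C}}]$, the local limit of $\seq{A}[\overline{\Xseq{C}}]$ is itself determined by $L$, by the (fixed) local limit of $\seq{A}$, and by the local limit of $\seq{A}[\Xseq{C}]$. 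Hence $\lim_n\langle\phi,\lift[\Xseq{C}]{\seq{A}_n}\rangle$ is a function of ${\rm Profile}(\Xseq{C})$ alone, and applying this to $\Xseq{C_1}$ and $\Xseq{C_2}$, which share their profile, gives the key step.

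To conclude interweaving, let $\Xseq{Z}$ be an arbitrary sequence with $Z_n\in\{(C_1)_n,(C_2)_n\}$. Its boundary is negligible, since for every $d$ one has $\ball[d]{\seq{A}}(\partial_{\seq{A}}\Xseq{Z})\subseteq\ball[d]{\seq{A}}(\partial_{\seq{A}}\Xseq{C_1})\cup\ball[d]{\seq{A}}(\partial_{\seq{A}}\Xseq{C_2})$ and both sets on the right are negligible. Moreover, for every local $\phi$ the value $\langle\phi,\lift[\Xseq{Z}]{\seq{A}_n}\rangle$ equals either $\langle\phi,\lift[\Xseq{C_1}]{\seq{A}_n}\rangle$ or $\langle\phi,\lift[\Xseq{C_2}]{\seq{A}_n}\rangle$, and both of these sequences converge --- by the key step --- to a common limit, so $\langle\phi,\lift[\Xseq{Z}]{\seq{A}_n}\rangle$ converges too and $\lift[\Xseq{Z}]{\seq{A}}$ is local-convergent. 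Thus $\Xseq{Z}$ is a cluster, and since $\Xseq{Z}$ was arbitrary, $\Xseq{C_1}\between\Xseq{C_2}$. The one genuinely delicate point --- and hence the main obstacle --- is the key step of the converse, i.e. passing from equal profiles to equal limits of the marked lifts; everything there turns on the asymptotic disjoint-union decomposition of $\seq{A}$ into $\seq{A}[\Xseq{C}]$ and $\seq{A}[\overline{\Xseq{C}}]$ for clusters, together with the bookkeeping of the marking pattern over tuples of free variables, which is precisely what \cite{Loclim} supplies; the rest is routine.
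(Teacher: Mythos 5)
The paper itself supplies no argument for this lemma --- it is imported from \cite{Loclim} with a bare end-of-proof mark --- so your reconstruction has to stand on its own, and in my view it does. The forward direction via the alternating sequence $Z_n=(C_1)_n$ ($n$ even), $Z_n=(C_2)_n$ ($n$ odd) is exactly right: interweaving forces $\Xseq{Z}$ to be a cluster, and convergence of $\nu_{\seq{A}}(\Xseq{Z})$ and of $\langle\phi_i,\seq{A}[\Xseq{Z}]\rangle$ then pins the even and odd subsequential limits together. You also correctly isolate the only nontrivial ingredient of the converse, namely that the local limit of $\lift[\Xseq{C}]{\seq{A}}$ is a function of ${\rm Prof}(\Xseq{C})$ alone, and your justification --- negligibility of $\partial_{\seq{A}}\Xseq{C}$ puts almost every component of a random tuple ``deep inside'' or ``deep outside'' $C_n$, after which a Feferman--Vaught/Gaifman splitting of the $d$-local test formula reduces everything to Stone pairings over $\seq{A}[\Xseq{C}]$ and $\seq{A}[\overline{\Xseq{C}}]$, the latter being recoverable (by induction on the number of free variables, from the $I=\emptyset$ term) from the limit of $\seq{A}$ and the profile --- is the right mechanism and is indeed what \cite{Loclim} formalizes. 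Two small points deserve explicit mention if you write this out in full: first, when $\lim\nu_{\seq{A}}(\Xseq{C})=1$ the limit of $\seq{A}[\overline{\Xseq{C}}]$ need not exist, but every term in which it appears carries a factor $(1-\nu_{\seq{A}}(C_n))^{k}$ with $k\geq 1$, so it does not matter; second, both directions tacitly use that ``non-negligible'' entails $\lim\nu_{\seq{A}}(\Xseq{C})>0$ (needed to pass between Stone pairings of the relativized formulas in $\lift[\Xseq{C}]{\seq{A}}$ and those of $\seq{A}[\Xseq{C}]$ by dividing by $\nu_{\seq{A}}(C_n)^{p}$), which is the intended reading here and is how the lemma is applied in the proof of Theorem~\ref{thm:newcluster}. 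With those caveats recorded, I see no gap.
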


Our main result in this section reveals the expanding structure of local-global convergent sequences.

\begin{theorem}
	\label{thm:newcluster}
	Let $\sigma$ be a countable relational signature, let $\sigma^+$ be the extension of $\sigma$ by countably many unary symbols $N$ and $U_i$ ($i\in\mathbb N$), and let $\sigma^*$ be the extension of $\sigma$ by countably many unary symbols $N$ and $M_{i,j}$.
	Let $\mathsf{I}:\sigma^*\rightarrow\sigma^+$ be the basic interpretation defined by $\rho_{U_i}(x):=\bigvee M_{i,j}(x)$ (and all other relations unchanged), 
	and let $\mathsf{Sh}^*:\sigma^*\rightarrow\sigma$ and
	$\mathsf{Sh}^+:\sigma^+\rightarrow\sigma$ be the natural forgetful interpretations.
	
	Then for every local-global convergent sequence $\seq{A}$ of $\sigma$-structure there exists a local-convergent sequence $\seq{A}^*$ such that
\begin{itemize}
	\item $P(\mathsf{Sh}^*)(\seq{A}^*)=\seq{A}$,
	\item for every $i,j\in\mathbb N$, $M_{i,j}(\seq{A}^*)$ is either null or an atomic cluster of $\seq{A}^*$, which is interweaving with $M_{i',j'}(\seq{A}^*)$  if and only  if $i=i'$,
	 \item $N(\seq{A}^*)$ is a nebula cluster of $\seq{A}^*$,
\end{itemize} 
and such that $\seq{A}^+=P(\mathsf{I})(\seq{A}^*)$ has the following properties:
\begin{itemize}	 
	\item $\seq{A}^+$ is a local-global convergent sequence such that $P(\mathsf{Sh}^+)(\seq{A}^+)=\seq{A}$,
	\item for every $i\in\mathbb N$, $U_{i}(\seq{A}^+)$ is either null or a cluster of $\seq{A}^+$, which can be covered by (finitely many) interweaving atomic clusters,
	 \item $N(\seq{A}^+)$ is a nebula cluster of $\seq{A}^+$.
\end{itemize} 
\end{theorem}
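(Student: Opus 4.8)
The plan is to bootstrap from the clustering of Theorem~\ref{thm:cluster}, to refine it \emph{inside the residual cluster} by pulling out expanding clusters, and then to collapse the whole family into a single lift using the extension property built into local-global convergence. Since every local-global convergent sequence is in particular ${\rm FO}^{\rm local}$-convergent, Theorem~\ref{thm:cluster} applies to $\seq{A}$ and yields a lift marking globular clusters, their universal interweaving unions, and a residual cluster $M_R(\seq{A})$; every globular cluster is already expanding, hence atomic, so the real task is to split $M_R(\seq{A})$. Throughout I would use freely the remark after Lemma~\ref{lem:satom} (for our sequence \emph{atomic}, \emph{strongly atomic} and \emph{expanding} coincide on non-negligible clusters), Lemma~\ref{lem:sa}, and the two facts recalled from \cite{Loclim}: two expanding clusters are weakly disjoint or interweaving, and two non-negligible clusters are interweaving iff they have the same profile. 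Combined, these show that any family of expanding subclusters of a fixed cluster splits, up to $\approx$, into interweaving classes, each class containing finitely many pairwise weakly disjoint representatives (a common positive limit measure, total mass at most $1$), with only countably many profiles occurring.

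The one place where local-global convergence (beyond Lemma~\ref{lem:satom}) is genuinely needed is an \emph{upgrading} observation: if a cluster $\seq{C}$ of $\seq{A}$ is atomic along one subsequence but has a non-negligible subcluster $\Xseq{Y}_g$ of intermediate limit measure along another subsequence $g$, then extending the lift $\lift[(\seq{C},\Xseq{Y}_g)]{\seq{A}_g}$ to a full ${\rm FO}^{\rm local}$-convergent lift of $\seq{A}$ (legitimate by Definition~\ref{def:lHc}, after a further subsequence) produces a subcluster $\Xseq{Y}\subseteq\seq{C}$ of $\seq{A}$ whose limit measure lies strictly between $0$ and $\lim\nu_{\seq{A}}(\seq{C})$, contradicting atomicity along the first subsequence; hence atomicity may be checked on the full sequence. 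Applying the dichotomy to all expanding subclusters of $M_R(\seq{A})$, I would list them (up to $\approx$) as interweaving classes indexed by a countable set disjoint from the one of Theorem~\ref{thm:cluster}, merge this with the globular indexing \emph{grouped by profile}, and obtain for each resulting index $i$ finitely many $\approx$-representatives $M_{i,1}(\seq{A}),\dots,M_{i,N_i}(\seq{A})$ whose union $U_i(\seq{A}):=\bigcup_j M_{i,j}(\seq{A})$ is a \emph{universal} cluster of $\seq{A}$ --- the expanding-cluster analogue of the universality of unions of globular classes established in \cite{Loclim}.

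To assemble $\seq{A}^*$: the $U_i(\seq{A})$, being clusters of every conservative lift, can all be marked at once (adding them one at a time preserves ${\rm FO}^{\rm local}$-convergence, and one passes to a limit using the compactness from Theorem~\ref{thm:LGrep}); the resulting lift $\seq{C}$ is itself local-global convergent, since universality of the marks makes its local-convergent lifts coincide with those of $\seq{A}$. Inside the local-global convergent $\seq{C}$, I pick a subsequence along which every $U_i$ is resolved into its $\approx$-representatives, refine it to be ${\rm FO}^{\rm local}$-convergent, and apply the extension property \emph{of $\seq{C}$} to stretch it to a full ${\rm FO}^{\rm local}$-convergent lift $\seq{A}^*$ of $\seq{C}$; I set $N(\seq{A}^*)=M_R(\seq{A})\setminus\bigcup_i U_i(\seq{A})$ and keep the $M_{i,j}$ marks. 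Then $P(\mathsf{Sh}^*)(\seq{A}^*)=\seq{A}$, the sequence $\seq{A}^*$ is ${\rm FO}^{\rm local}$-convergent, each $M_{i,j}(\seq{A}^*)$ is --- by the upgrading observation applied inside $\seq{C}$ --- either negligible or an atomic cluster of $\seq{A}^*$, and two of them are interweaving exactly when they share a profile, i.e.\ exactly when $i=i'$. For the nebula property, a non-negligible atomic subcluster $\Xseq{Y}_g\subseteq N(\seq{A}^*)_g$ along some subsequence is, after forgetting colours (expansion and negligibility of boundaries are colour-free, and $\lift[\Xseq{Y}_g]{\seq{A}_g}$ is a reduct of $\lift[\Xseq{Y}_g]{\seq{A}^*_g}$), an expanding subcluster of $M_R(\seq{A})$; by the dichotomy and the exhaustiveness of the extraction it is either weakly disjoint from all the $M_{i,j}(\seq{A})$ (impossible by maximality) or $\approx$ to one of them, which contradicts $\Xseq{Y}_g\subseteq N(\seq{A}^*)_g$.

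Finally, for $\seq{A}^+=P(\mathsf{I})(\seq{A}^*)$: forgetting all the new relations at once agrees with doing so in the two stages $\mathsf{I}$ then $\mathsf{Sh}^+$, so $P(\mathsf{Sh}^+)(\seq{A}^+)=P(\mathsf{Sh}^*)(\seq{A}^*)=\seq{A}$; each $M_{i,j}(\seq{A}^*)$ is a cluster of $\seq{A}^+$ (its defining lift is interpretable in a lift of $\seq{A}^*$, hence ${\rm FO}^{\rm local}$-convergent, with negligible boundary), is expanding, hence atomic in $\seq{A}^+$ by Lemma~\ref{lem:satom}, and the $M_{i,j}(\seq{A}^*)$ with fixed $i$ are pairwise interweaving in $\seq{A}^+$ (their mixtures are clusters, being interpretable in the corresponding lifts of $\seq{A}^*$), so $U_i(\seq{A}^+)=\bigcup_j M_{i,j}(\seq{A}^*)$ is covered by finitely many interweaving atomic clusters and is itself a cluster; the same interpretation argument together with the previous paragraph shows $N(\seq{A}^+)=N(\seq{A}^*)$ is a nebula of $\seq{A}^+$; and $\seq{A}^+=\lift[(N,\{U_i\})]{\seq{A}}$ is local-global convergent because $N$ and every $U_i$ are universal clusters of $\seq{A}$, so the local-convergent lifts of $\seq{A}^+$ are precisely those of $\seq{A}$ carrying the forced marks, whence the $\mathsf{Sh}_*({\rm FO}^{\rm local})$-convergence of $\seq{A}$ descends to $\seq{A}^+$. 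The two steps I expect to require genuine care are producing a \emph{single} local-convergent lift that simultaneously resolves all the extracted atomic clusters --- where the universality of the interweaving unions must be combined with the extension property of Definition~\ref{def:lHc} --- and the claim that the leftover $N$ is universal, which is exactly what makes $\seq{A}^+$ local-global convergent; both reduce to establishing the expanding-cluster analogues of the universality statements proved for globular clusters in \cite{Loclim}.
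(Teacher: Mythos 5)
Your architecture matches the paper's: extract expanding clusters grouped by profile, mark them to get $\seq{A}^*$, project to $\seq{A}^+$, and use Lemma~\ref{lem:sa} together with a maximality/exhaustiveness argument to control what any extension of a lift can do. The cosmetic differences (bootstrapping from Theorem~\ref{thm:cluster} and refining the residual cluster, versus the paper's direct greedy selection of expanding clusters in decreasing lexicographic order of profile, exhausting each profile class before moving on; and obtaining $\seq{A}^*$ by invoking \cite[Corollary 5]{Loclim} on the disjointified family rather than by an extension argument) are harmless. The problem is that both of the steps you yourself flag as delicate --- assembling a single local-convergent lift carrying all the marks, and the local-global convergence of $\seq{A}^+$ --- are made to rest on the assertion that the unions $U_i$ (and the leftover $N$) are \emph{universal} clusters, i.e.\ clusters of every conservative lift, as an ``expanding-cluster analogue'' of the globular-cluster statement in \cite{Loclim}. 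That analogue is not in \cite{Loclim}, and as an unconditional statement about local-convergent sequences it is not the right one: for open (non-globular) expanding clusters there is no reason for such universality to hold under mere local convergence --- the whole point of the theorem is that local-global convergence is what rescues it. So the proposal reduces the theorem's hardest content to a claim that is itself at least as hard as (and, stated that way, stronger than) what is needed.

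What the paper actually proves at this point is a \emph{relative} rigidity statement, and your own sketch of the nebula argument contains all the ingredients to prove it: given a local-convergent subsequence $\seq{B}^+_f$ of lifts of $\seq{A}^+$, transfer the $M_{i,j}$ marks, pass to a local-convergent sub-subsequence, and extend it (Definition~\ref{def:lHc} applied to $\seq{A}$, not to some intermediate $\seq{C}$) to a full local-convergent lift $\seq{D}^*$; the marks $\hat{\Xseq{C}}^{i,j}=M_{i,j}(\seq{D}^*)$ are clusters with the same profile as $\Xseq{C}^{i,j}$, and Lemma~\ref{lem:sa} forces each $\hat{\Xseq{C}}^{i,j}$ either to satisfy $\hat{\Xseq{C}}^{i,j}\preceq\bigcup_{j'}\Xseq{C}^{i,j'}$ or to be, along some subsequence, weakly disjoint from $\bigcup_{j'}\Xseq{C}^{i,j'}$ --- and the latter, after one more lift-extension, yields an expanding cluster with profile equal to that of the class $i$ and weakly disjoint from every selected $\Xseq{C}^{i,j'}$, contradicting the exhaustiveness of the selection. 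Comparing limit measures then gives $\bigcup_j\hat{\Xseq{C}}^{i,j}\approx\bigcup_{j'}\Xseq{C}^{i,j'}$, which is exactly the closeness of $P(\mathsf I)(\seq{D}^*)$ to $\seq{B}^+$ needed to patch the two into a full local-convergent lift of $\seq{A}^+$. Replacing your appeal to ``universality'' by this argument (run directly on $\seq{A}$, with the marked sequence $\seq{A}^*$ produced by \cite[Corollary 5]{Loclim} rather than by an extension step whose legitimacy would presuppose the conclusion) closes the gap and recovers the paper's proof.
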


Note that this result is in agreement with the intuition: The $\sigma^*$-lift is ``finer'' than the $\sigma^+$-lift and thus less likely to be local-global convergent. For instance, we can refer to the subsequence extension property stated in the definition of lift-Hausdorff convergence (Definition~\ref{def:lHc}).
\begin{proof}
Let $\seq{A}$ be a local-global convergent sequence.
We select inductively clusters expanding clusters $\Xseq{C}^{i,j}$ of $\seq{A}$ as follows: We start with $\Xseq{Z}=\Xseq{0}$, $i=1$, $j=1$ and let $P$ be the maximum profile of an expanding cluster of $\seq{A}$.
Then we repeat the following procedure as long as there exists an expanding cluster of $\seq{A}$ that is weakly disjoint from $\Xseq{Z}$
\begin{itemize}
	\item If there exists an expanding cluster in $\seq{A}$ with profile $P$ that is weakly disjoint from $\Xseq{Z}$ we select one as $\Xseq{C}^{i,j}$, 
	we let $\Xseq{Z}\leftarrow\Xseq{Z}\cup \Xseq{C}^{i,j}$, and we increase $j$ by $1$.
	\item Otherwise, we select one with maximum profile as $\Xseq{C}^{i+1,1}$, e let $\Xseq{Z}\leftarrow\Xseq{Z}\cup \Xseq{C}^{i+1,1}$, we let $P$ be the profile of $\Xseq{C}^{i+1,1}$, we increase $i$ by $1$, and let $j=1$.
\end{itemize}
It is easily checked that by modifying marginally the clusters $\Xseq{C}^{i,j}$ we can make them disjoint and such that $\Xseq{N}=\Xseq{A}\setminus\bigcup_{i,j}\Xseq{C}^{i,j}$ is a nebula cluster. Then by \cite[Corollary 5]{Loclim} lifting $\seq{A}$ by marking $M_{i,j}$ the cluster $\Xseq{C}^{i,j}$ and $N$ the cluster $\Xseq{N}$ we get a local-convergent sequence $\seq{A}^*$, which obviously satisfies the conditions stated in the Theorem.

Let $\seq{A}^+=P(\mathsf{I})(\seq{A}^*)$. The only property we still have to prove is that $\seq{A}^+$ is local-global convergent. According to Definition~\ref{def:lHc} this boils down to proving that every local-convergent subsequence $\seq{B}^+_f$ of lifts of $\seq{A}^+$ can be extended into a full local-convergent sequence of lifts of $\seq{A}^+$. We can transfer the relations $M_{i,j}$ from $\seq{A}^*$ to $\seq{B}^+_f$. This way we obtain a subsequence $\seq{B}^*_f$ of lifts of $\seq{A}^*$ (which does not need to be local convergent), such that $P(\mathsf{I})(\seq{B}^*_f)=\seq{B}^+_f$. Let $\seq{B}^*_{g\circ f}$ be a local-convergent subsequence of $\seq{B}^*_f$. As $P(\mathsf{Sh}^*)(\seq{B}^*_{g\circ f})=\seq{A}_{g\circ f}$ and $\seq{A}$ is local-global convergent there exists a local-convergent sequence $\seq{D}^*$ of lifts of $\seq{A}$ extending $\seq{B}^*_{g\circ f}$, that is:  $P(\mathsf{Sh}^*)(\seq{D}^*)=\seq{A}$ and  $\seq{D}^*_{g\circ f}=\seq{B}^*_{g\circ f}$. Let $\hat{\Xseq{C}}^{i,j}=M_{i,j}(\seq{D}^*)$. As $(\hat{\Xseq{C}}^{i,j})_{g\circ f}=(\Xseq{C}^{i,j})_{g\circ f}$ we get that $\hat{\Xseq{C}}^{i,j}$ is a cluster of $\seq{A}$ with same profile as $\Xseq{C}^{i,j}$. According to Lemma~\ref{lem:sa}, the limit set of $\nu_{\seq{A}}(\hat{\Xseq{C}}^{i,j}\cap \Xseq{C}^{i,j'})$ is included in $\{0,m\}$, where $m=\lim\nu_{\seq{A}}(\hat{\Xseq{C}}^{i,j})=\lim\nu_{\seq{A}}(\Xseq{C}^{i,j'})$. It follows that either $\hat{\Xseq{C}}^{i,j}\preceq\bigcup_{j'}\Xseq{C}^{i,j'}$ or there exists a subsequence $\seq{A}_h$ of $\seq{A}$ such that $(\hat{\Xseq{C}}^{i,j})_h$ is weakly disjoint from the cluster $(\bigcup_{j'}\Xseq{C}^{i,j'})_g$. Marking all the clusters $\Xseq{C}^{i,j}$ and $\hat{\Xseq{C}}^{i,j}$ in $\seq{A}_h$ we get a local-convergent subsequence of lifts, which can be extended into full local-convergent sequence of lifts of $\seq{A}$. In this sequence, the marks corresponding to the extension of $(\hat{\Xseq{C}}^{i,j})_g$ will correspond to a cluster of $\seq{A}$ disjoint from all the clusters $\Xseq{C}^{i,j'}$ but with the same profile, which contradicts the construction procedure of the clusters $\Xseq{C}^{i,j}$. Thus $\hat{\Xseq{C}}^{i,j}\preceq\bigcup_{j'}\Xseq{C}^{i,j'}$, and $\bigcup_{j}\hat{\Xseq{C}}^{i,j}\preceq\bigcup_{j'}\Xseq{C}^{i,j'}$. As these two clusters have same limit measure we have $\bigcup_{j}\hat{\Xseq{C}}^{i,j}\approx\bigcup_{j'}\Xseq{C}^{i,j'}$. This means that $P(\mathsf I)(\seq{D}^*)$ and $P(\mathsf I)(\seq{B}^*)$ are sufficiently close, so that if we consider the lifts of $\seq{A}^+$ defined by $\seq{B}^+$ for indices of the form $f(n)$ for some $n\in\mathbb N$ and by $P(\mathsf I)(\seq{D}^*)$ for the other indices, we get a local-convergent sequence of lifts of $\seq{A}^+$ which extends $\seq{B}^+_f$. It follows that $\seq{A}^+$ is local-global convergent, what concludes our proof.
\end{proof}
\subsection{Local Global Quasi-Limits}
\label{sec:quasi}
Let us finish this paper in an ambitious way. In \cite{limit1,modeling} we defined the notion of modeling as a limit object from structural convergence.

Modeling limits generalize graphing limits and thus it follows from \cite{CMUC} that ${\rm FO}$-convergent sequences of graphs with bounded degrees have modeling limits. In \cite{limit1} we constructed modeling limits for ${\rm FO}$-convergent sequences of graphs with bounded tree-depth, and extended the construction to ${\rm FO}$-convergent sequences of trees in \cite{modeling}. Then existence of modelings for ${\rm FO}$-convergent sequences has been proved for graphs with bounded path-width \cite{gajarsky2016first} and eventually for sequences of graphs in an arbitrary nowhere dense class \cite{modeling_jsl}, which is best possible when considering monotone classes of graphs \cite{modeling}. In fact this provides us with a high level analytic characterization of nowhere dense classes.

\begin{definition}
Let $\seq{A}$ be a local-global convergent sequence.
A modeling $\mathbf{L}$ is a {\em local-global quasi-limit} of $\seq{A}$ if for every local convergent sequence $\seq{A^+}$ of lifts of $\seq{A}$  and every $\epsilon>0$ there exists an admissible lift $\mathbf L^+$ of $\mathbf L$ (that is a lift  $\mathbf L^+$ of $\mathbf L$ that is a modeling),  such that for every local formula $\phi$ we have  
\[	\bigl|\langle\phi,\mathbf L^+\rangle-\lim_{n\rightarrow\infty}\langle\phi,\mathbf A_n^+\rangle\bigr|<c_\phi \epsilon,
\]
where $c_\phi$ is a positive constant, which depends only on $\phi$.
\end{definition}

In other words, the closure of the measures associated to admissible lifts of $\mathbf L$ includes the limit Hausdorff limit of the sets of measures associated to lifts of the sequence.

For local-global convergence, it was proved in \cite{hatami2014limits} that graphings still suffice as limit objects. We don't know, however, if every local-global convergent sequence of graphs in a nowhere dense class has a modeling local-global limit. We close this paper by proving that this is almost the case, in the sense that every local-global convergent sequence of graphs in a nowhere dense class has a modeling local-global quasi-limit.

We consider a fixed countable signature $\sigma$ and the signature $\tau$ obtained by adding countably many unary symbols $M_1,M_2,\dots,M_n,\dots$ to $\sigma$, and the forgetful interpretation $\mathsf{Sh}:\tau\rightarrow\sigma$. As before we understand local-global convergence as $\mathsf{Sh}_*({\rm FO}^{\rm local})$-convergence.
We fix a chain covering $\mathfrak X$ of ${\rm FO}^{\rm local}(\tau)$ (see Section~\ref{sec:rep}), from which we derive metrics and pseudo-metrics as in Sections~\ref{sec:met} and~\ref{sec:LHC}. We also fix a bijection $\beta:\mathbb N\times \mathbb N\rightarrow\mathbb N$ (for Hilbert hotel argument) and let 
$Z_c$ be the renaming interpretation which renames $M_{\beta(c,i)}$ as $M_i$ and forget all the marks not being renamed.

\begin{lemma}
\label{lem:netseq}
There exists a function $h:(0,1)\rightarrow \mathbb N$ with the following property:

For every local-global convergent sequence  $\seq{A}$ of $\sigma$-structures 
there exists a local convergent sequence $\seq{B}$ of $\tau$-structures with $P(\mathsf{Sh})(\seq{B})=\seq{A}$,  such that for every $\epsilon>0$ there exists some integer $n_0$  such that 

for every $n\geq n_0$ and every $\mathbf C\in  P(\mathsf{Sh})^{-1}(\mathbf A_n)$ there exists $1\leq c\leq h(\epsilon)$ with
\[
{\rm dist}_{\mathfrak X}(\mathbf C,P(\mathsf Z_c)(\mathbf B_n))<\epsilon.
\]
\end{lemma}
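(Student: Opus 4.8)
The plan is to build the sequence $\seq{B}$ and the function $h$ by a diagonal/compactness argument, using the representation of local-global limits as compact sets of measures (Theorem~\ref{thm:LGrep}) together with the fact that the Stone space $S_{{\rm FO}^{\rm local}}^\tau$ is compact and metrizable, so that $(\overline{\mathscr M_\tau^{{\rm FO}^{\rm local}}},{\rm d}_{\mathfrak X}^{\rm LP})$ is a compact metric space and hence totally bounded. The key point is that the set
\[
\mathscr L(\seq{A})=\Bigl\{\text{weak limits of }T_\tau^{{\rm FO}^{\rm local}}(\seq{C}_f)\ \bigm|\ \seq{C}_f \text{ an ${\rm FO}^{\rm local}$-convergent subsequence of lifts of }\seq{A}\Bigr\}
\]
is, by Theorem~\ref{thm:LGrep} (applied with $\mathsf I=\mathsf{Sh}$ and $X={\rm FO}^{\rm local}$), a single non-empty compact subset $L$ of $\overline{\mathscr M_\tau^{{\rm FO}^{\rm local}}}$, namely the Hausdorff limit of the sequence $\mathscr L(\mathbf A_n)$ of compact sets of measures. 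Since $L$ is compact in a compact metric space, it is totally bounded; this will be the source of the uniform (sequence-independent) function $h$.

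**Key steps, in order.**

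First I would fix, once and for all, a countable dense subset $\{\mu^{(c)}\}_{c\in\mathbb N}$ of $\overline{\mathscr M_\tau^{{\rm FO}^{\rm local}}}$ consisting of measures associated to \emph{finite} $\tau$-structures (possible since $\mathscr M_\tau^{{\rm FO}^{\rm local}}$ is dense in its closure). For each $\epsilon\in(0,1)$ let $h(\epsilon)$ be such that the balls of radius $\epsilon/2$ centered at $\mu^{(1)},\dots,\mu^{(h(\epsilon))}$ already cover $\overline{\mathscr M_\tau^{{\rm FO}^{\rm local}}}$; this is where total boundedness of the compact ambient space is used, and it makes $h$ depend only on $\epsilon$ and on the fixed enumeration, \emph{not} on $\seq{A}$. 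Next, given a local-global convergent $\seq{A}$, I would use Theorem~\ref{thm:LGrep} to get the compact limit set $L$, choose a single ${\rm FO}^{\rm local}$-convergent sequence $\seq{B}$ of lifts of $\seq{A}$ whose limit measure lies in $L$ (any one will do; existence follows from compactness of $\overline{\mathscr M_\tau^{{\rm FO}^{\rm local}}}$ and the subsequence-extension property in Definition~\ref{def:lHc}, exactly as in the proof of Theorem~\ref{thm:Cauchy}), and then verify the approximation clause. Given $\epsilon>0$: for $n$ large, ${\rm d}_{\mathfrak X}^{\rm H}(\mathscr L(\mathbf A_n),L)<\epsilon/2$ by Hausdorff convergence; hence for every $\mathbf C\in P(\mathsf{Sh})^{-1}(\mathbf A_n)$ the measure $T_\tau^{{\rm FO}^{\rm local}}(\mathbf C)$ lies within $\epsilon/2$ of some point of $L$, hence within $\epsilon$ of some $\mu^{(c)}$ with $c\le h(\epsilon)$, and by realizing $\mu^{(c)}$ as $T_\tau^{{\rm FO}^{\rm local}}$ of a finite structure and absorbing its finitely many marks into $\seq{B}$ via the Hilbert-hotel renaming $\mathsf Z_c$ (this is exactly why $\tau$ carries countably many unary symbols and why $\beta$ was fixed), we obtain ${\rm dist}_{\mathfrak X}(\mathbf C,P(\mathsf Z_c)(\mathbf B_n))<\epsilon$. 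One must be slightly careful: the $\mu^{(c)}$ should be chosen so that they, together with $\seq{B}$, embed into a single $\tau$-structure family; this is arranged by having $\seq{B}$ built from the start to "contain room" for all the $\mu^{(c)}$ in disjoint blocks of marks indexed by $\beta(c,\cdot)$, which is the technical content of the lemma's setup.

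**Main obstacle.**

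The delicate point is not the covering argument but the coupling: the lemma asserts a \emph{single} sequence $\seq{B}$ (not a subsequence) that simultaneously, for all large $n$ and all lifts $\mathbf C$ of $\mathbf A_n$, provides a close witness $P(\mathsf Z_c)(\mathbf B_n)$. So $\seq{B}_n$ must at index $n$ already encode approximations to \emph{every} possible lift of $\mathbf A_n$ at once. The hard part will be organizing the inductive construction of $\seq{B}$ so that the marks of the dense family $\{\mu^{(c)}\}$ are laid down inside $\mathbf B_n$ in a way compatible with local convergence of $\seq{B}$ itself and with $P(\mathsf{Sh})(\seq{B})=\seq{A}$ — essentially an amalgamation/interleaving of countably many mark systems over the same shadow sequence, reminiscent of the "almost $\mathsf I_*(X)$-limit probability measure" proposition above. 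Once that bookkeeping is set up, the metric estimates are exactly the routine Hausdorff/L\'evy–Prokhorov manipulations already appearing in the proof of Theorem~\ref{thm:Cauchy}, so I would not belabor them.
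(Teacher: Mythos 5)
Your proposal has the right ingredients (total boundedness, Hilbert-hotel packing via the $\mathsf Z_c$, extraction of a convergent subsequence and extension via local-global convergence), but the central approximation step fails as written. You fix a dense family $\{\mu^{(c)}\}$ in the ambient space $\overline{\mathscr M_\tau^{{\rm FO}^{\rm local}}}$, realized by arbitrary finite $\tau$-structures, and then claim ${\rm dist}_{\mathfrak X}(\mathbf C,P(\mathsf Z_c)(\mathbf B_n))<\epsilon$ by ``absorbing the marks of $\mu^{(c)}$ into $\seq{B}$''. But $P(\mathsf Z_c)(\mathbf B_n)$ is by construction a lift of $\mathbf A_n$ (its marks are subsets of the domain $A_n$), whereas the finite structure realizing $\mu^{(c)}$ has an unrelated domain and, generically, unrelated $\sigma$-statistics; there is nothing to absorb, and there is in general no lift of $\mathbf A_n$ whose associated measure is anywhere near $\mu^{(c)}$. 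What must be covered is the fiber $P(\mathsf{Sh})^{-1}(\mathbf A_n)$, and it must be covered by finitely many lifts of $\mathbf A_n$ \emph{itself}, with a cardinality bound $g(\epsilon)$ uniform in $\mathbf A_n$ (this does follow from total boundedness of $({\rm Rel}(\tau),{\rm dist}_{\mathfrak X})$, by intersecting a global $\epsilon/2$-net with the fiber). It is these fiber coverings, for $\epsilon=2^{-1},2^{-2},\dots$, that get packed via $\beta$ into a single ``universal lift'' $\mathbf A_n^+$ of $\mathbf A_n$, with $h(\epsilon)=\sum_{i\le\lceil-\log\epsilon\rceil+1}g(2^{-i})$. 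Your covering of the ambient space by sequence-independent ball centers does not substitute for this.

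The second gap is the assertion that for $\seq{B}$ ``any one will do''. This is false (the all-marks-empty lift sequence is locally convergent and satisfies none of the approximation claims) and contradicts your own later observation that $\mathbf B_n$ must simultaneously encode approximations to every lift of $\mathbf A_n$. The actual construction takes the sequence of universal lifts $\seq{A}^+$, extracts a locally convergent subsequence $\seq{A}^+_f$, and uses the defining property of local-global convergence to extend it to a full locally convergent $\seq{B}$. The price is that $\mathbf B_n$ for $n$ outside the range of $f$ is no longer a universal lift, so the approximation property at such $n$ has to be recovered by a three-term triangle inequality: match a given lift $\mathbf C$ of $\mathbf A_n$ to a lift $\mathbf C'$ of $\mathbf A_{f(n)}$ using the Cauchy property of $\seq{A}$ for ${\rm dist}_{\mathfrak X,\mathsf{Sh}}^{\rm H}$; match $\mathbf C'$ to some $P(\mathsf Z_c)(\mathbf B_{f(n)})$ by universality of $\mathbf B_{f(n)}$; and match $P(\mathsf Z_c)(\mathbf B_{f(n)})$ to $P(\mathsf Z_c)(\mathbf B_n)$ using the Cauchy property of $\seq{B}$ together with uniform continuity of $P(\mathsf Z_c)$ for the finitely many $c\le h(\epsilon)$ involved. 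This step is where local-global convergence is genuinely used and is not a routine manipulation already contained in Theorem~\ref{thm:Cauchy}; deferring it as ``bookkeeping'' leaves the proof incomplete.
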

\begin{proof}
	As the space $({\rm Rel}(\tau),{\rm dist}_{\mathfrak X})$ is totally bounded there exists a mapping $g:(0,1)\rightarrow \mathbb N$ such that for each $\epsilon>0$ and each $\sigma$-structure $\mathbf A$ there is a subset $\mathcal B_{\mathbf A,\epsilon}$ of $P(\mathsf{Sh})^{-1}(\mathbf A)$ of cardinality at most $g(\epsilon)$ with the property that every $\mathbf C\in P(\mathsf{Sh})^{-1}(\mathbf A)$ is at ${\rm dist}_{\mathfrak X}$-distance at most $\epsilon$ from a $\tau$-structure in $\mathcal B_{\mathbf A,\epsilon}$. (Such a set may be called an $\epsilon$-covering.) We construct an infinite sequence $(\mathbf A^{(i)})_{i\in\mathbb N}$ of $\tau$-structures
	by 
	listing all the structures in $\mathcal B_{\mathbf A,1/2}$ then all the structures in $\mathcal B_{\mathbf A,1/4}$, etc.
	
	We now construct a $\tau$-structure $\mathbf A^+\in P(\mathsf{Sh})^{-1}(\mathbf A)$ by letting $M_{\beta(i,j)}(\mathbf A^+)=M_j(\mathbf A^{(i)})$. Hence $\mathbf A^{(i)}=P(\mathsf Z_i)(\mathbf A^+)$. We say that $\mathbf A^+$ is a {\em universal lift} of $\mathbf A$.
	
Define the function $h:(0,1)\rightarrow \mathbb N$ 
by
\[
h(x)=\sum_{i=1}^{\lceil-\log x\rceil+1}g(2^{-i}).
\]
Then for every $\epsilon>0$ and every $\mathbf B\in P(\mathsf{Sh})^{-1}(\mathbf A)$  there is an index $c\leq h(\epsilon)$ such that ${\rm dist}_{\mathfrak X}(\mathbf C,\mathbf A^{(c)})<\epsilon/2$, that is such that 
		${\rm dist}_{\mathfrak X}(\mathbf C,P(\mathsf Z_c)(\mathbf A^+))<\epsilon/2$. 

Now consider the local-global convergent sequence $\seq{A}$ and a sequence $\seq{A}^+$ where $\mathbf A_n^+$ is a universal lift of $\mathbf A_n$. This last sequence has a local convergent subsequence $\seq{A}^+_f$, which we extend into a sequence $\seq{B}$ lifting $\seq{A}$.

Let $\epsilon>0$. According to local-global convergence of $\seq{A}$ and local convergence of $\seq{B}$ there exists $n_0$ such that for every $n,m\geq n_0$ 
we have ${\rm dist}_{\mathfrak X,\mathsf{Sh}}^{\rm H}(\mathbf A_n,\mathbf A_m)<\epsilon/4$ and 
${\rm dist}_{\mathfrak X}(\mathbf B_n,\mathbf B_m)<\alpha$,
where $\alpha$ is such that for every $i\leq h(\epsilon/2)$ we have \[{\rm dist}_{\mathfrak X}(\mathbf X,\mathbf Y)<\alpha\Rightarrow {\rm dist}_{\mathfrak X}(P(\mathsf Z_i)(\mathbf X),P(\mathsf Z_i)(\mathbf Y))<\epsilon/4.\] 
Let $n\geq n_0$ (hence $f(n)\geq n_0$). 
Let $\mathbf C\in P(\mathsf{Sh})^{-1}(\mathbf A_n)$.
Then there exists $\mathbf C'\in P(\mathsf{Sh})^{-1}(\mathbf A_{f(n)})$ such that 
${\rm dist}_{\mathfrak X}(\mathbf C,\mathbf C')<\epsilon/4$. As $\mathbf B_{f(n)}=\mathbf A_{f(n)}^+$ is a universal lift of $\mathbf A_{f(n)}$ there exists $c\leq h(\epsilon)$ such that ${\rm dist}_{\mathfrak X}(P(\mathsf Z_c)(\mathbf B_{f(n)}),\mathbf C')<\epsilon/2$.
As ${\rm dist}_{\mathfrak X}(\mathbf B_n,\mathbf B_{f(n)})<\alpha$ we have
${\rm dist}_{\mathfrak X}(P(\mathsf Z_c)(\mathbf B_{f(n)}),P(\mathsf Z_c)(\mathbf B_{n})<\epsilon/4$. Altogether, we get
${\rm dist}_{\mathfrak X}(P(\mathsf Z_c)(\mathbf B_{n}),\mathbf C)<\epsilon$ as wanted.
\end{proof}

\begin{definition}
A $\sigma$-modeling $\mathbf L$ is a {\em quasi-limits} of a local global convergent sequence $\seq{A}$ of $\sigma$-structures if, for every local convergent sequence $\seq{A}^+$ of $\tau$-structures with $P(\mathsf{Sh})(\seq{A}^+)=\seq{A}$ and for every $\epsilon>0$ there exists a $\tau$-modeling $\mathbf L^+$ with $P(\mathsf{Sh})(\mathbf L^+)=\mathbf L$ such 
that $\limsup{\rm dist}_{\mathfrak X}(\mathbf L^+,\seq{A}^+)<\epsilon$.
	\end{definition}

In other words, for every local-global convergent sequence $\seq{A}$ there is a modeling $\mathbf L$ such that 
any local convergent sequence lifting $\seq{A}$ has a limit which is $\epsilon$-close to an admissible lifting of $\mathbf L$. (By admissible, we mean that the lift of $\mathbf L$ is itself a modeling.)

\begin{theorem}
\label{thm:quasilimit}
	Every local-global convergent sequence of graphs in a nowhere dense class has a modeling quasi-limit.
\end{theorem}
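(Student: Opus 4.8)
The plan is to feed the universal-lift construction of Lemma~\ref{lem:netseq} into the existence theorem for modeling limits of nowhere dense classes, using the countable family of renaming interpretations $\mathsf Z_c$ to glue all the relevant limit objects to one common shadow. First I would apply Lemma~\ref{lem:netseq} to the given local-global convergent sequence $\seq{A}$ of graphs in the nowhere dense class $\mathcal C$, obtaining a local convergent sequence $\seq{B}$ of $\tau$-structures with $P(\mathsf{Sh})(\seq{B})=\seq{A}$ and the net property: for every $\epsilon>0$ there is $n_0$ such that for all $n\geq n_0$ every $\mathbf C\in P(\mathsf{Sh})^{-1}(\mathbf A_n)$ satisfies ${\rm dist}_{\mathfrak X}(\mathbf C,P(\mathsf Z_c)(\mathbf B_n))<\epsilon$ for some $c\leq h(\epsilon)$. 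Since $\tau$ is obtained from the graph signature $\sigma$ by adding only unary symbols, the Gaifman graph of each $\mathbf B_n$ is just $\mathbf A_n$, so $\seq{B}$ is a sequence of $\tau$-structures with Gaifman graphs in $\mathcal C$. Extracting an ${\rm FO}$-convergent subsequence $\seq{B}_g$ (by compactness of $\overline{{\rm Rel}(\tau)}^{{\rm FO}}$) and invoking the modeling limit theorem of \cite{modeling_jsl} — whose proof goes through for monadic (countably many unary predicates) expansions of a nowhere dense graph class, as unary predicates do not change the Gaifman graph — yields a $\tau$-modeling $\widehat{\mathbf L}$ with $\langle\psi,\widehat{\mathbf L}\rangle=\lim_n\langle\psi,\mathbf B_{g(n)}\rangle$ for all $\psi\in{\rm FO}(\tau)$. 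As $\seq{B}$ is ${\rm FO}^{\rm local}$-convergent, all subsequential limits of $(\langle\phi,\mathbf B_n\rangle)_n$ coincide for $\phi\in{\rm FO}^{\rm local}(\tau)$, so in fact $\langle\phi,\widehat{\mathbf L}\rangle=\lim_n\langle\phi,\mathbf B_n\rangle$; that is, $\widehat{\mathbf L}$ is a modeling ${\rm FO}^{\rm local}$-limit of the full sequence $\seq{B}$.

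Next I would set $\mathbf L:=P(\mathsf{Sh})(\widehat{\mathbf L})$, which is a $\sigma$-modeling, and for each $c\in\mathbb N$ put $\mathbf L^{(c)}:=P(\mathsf Z_c)(\widehat{\mathbf L})$, which is a $\tau$-modeling since $\mathsf Z_c$ is a basic interpretation that only renames and forgets unary relations. Because $\mathsf{Sh}\circ\mathsf Z_c=\mathsf{Sh}$ (both forget all marks), we have $P(\mathsf{Sh})(\mathbf L^{(c)})=\mathbf L$ for every $c$, so each $\mathbf L^{(c)}$ is an admissible lift of $\mathbf L$. Moreover $\langle\phi,\mathbf L^{(c)}\rangle=\langle L(\mathsf Z_c)(\phi),\widehat{\mathbf L}\rangle=\lim_n\langle\phi,P(\mathsf Z_c)(\mathbf B_n)\rangle$, so $\mathbf L^{(c)}$ is a modeling ${\rm FO}^{\rm local}$-limit of the sequence $\seq{B}^{(c)}:=P(\mathsf Z_c)(\seq{B})$, whence $\lim_n{\rm dist}_{\mathfrak X}(\mathbf L^{(c)},\mathbf B^{(c)}_n)=0$. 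Note that each $\seq{B}^{(c)}$ is ${\rm FO}^{\rm local}$-convergent and lifts $\seq{A}$.

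To verify that $\mathbf L$ is a quasi-limit, fix a local convergent sequence $\seq{A}^+$ of $\tau$-structures with $P(\mathsf{Sh})(\seq{A}^+)=\seq{A}$ and $\epsilon>0$. By the net property with parameter $\epsilon/4$ there is $n_0$ such that for $n\geq n_0$ one has ${\rm dist}_{\mathfrak X}(\mathbf A^+_n,\mathbf B^{(c(n))}_n)<\epsilon/4$ with $c(n)\leq h(\epsilon/4)$. The crucial step is to replace the index $c(n)$, which a priori depends on $n$, by a single index: since $c(\cdot)$ takes finitely many values, some $c^\ast$ occurs for infinitely many $n$; both $\seq{A}^+$ and $\seq{B}^{(c^\ast)}$ are Cauchy for ${\rm dist}_{\mathfrak X}$ (being ${\rm FO}^{\rm local}$-convergent), with limits $a^+$ and $b^{(c^\ast)}$ in the completion, and letting $n\to\infty$ along the indices with $c(n)=c^\ast$ gives ${\rm dist}_{\mathfrak X}(a^+,b^{(c^\ast)})\leq\epsilon/4$; the triangle inequality then forces $\limsup_n{\rm dist}_{\mathfrak X}(\mathbf A^+_n,\mathbf B^{(c^\ast)}_n)\leq\epsilon/4$. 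Combining this with $\lim_n{\rm dist}_{\mathfrak X}(\mathbf L^{(c^\ast)},\mathbf B^{(c^\ast)}_n)=0$ and the triangle inequality once more yields $\limsup_n{\rm dist}_{\mathfrak X}(\mathbf L^{(c^\ast)},\mathbf A^+_n)\leq\epsilon/4<\epsilon$; since $P(\mathsf{Sh})(\mathbf L^{(c^\ast)})=\mathbf L$, the modeling $\mathbf L^+:=\mathbf L^{(c^\ast)}$ witnesses the quasi-limit property.

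The main obstacle I anticipate is twofold. First, one must ensure the modeling ${\rm FO}^{\rm local}$-limits of the countably many sequences $\seq{B}^{(c)}$ all share the one shadow $\mathbf L$; this is handled cleanly by taking a modeling ${\rm FO}^{\rm local}$-limit $\widehat{\mathbf L}$ of the single sequence $\seq{B}$ and pushing it forward by the $\mathsf Z_c$, exploiting $\mathsf{Sh}\circ\mathsf Z_c=\mathsf{Sh}$. Second, the passage from the ``pointwise'' net to a fixed covering index $c^\ast$ is exactly where the finiteness of $h(\epsilon)$ and the ${\rm FO}^{\rm local}$-Cauchyness of $\seq{A}^+$ and of the $\seq{B}^{(c)}$ are used. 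A minor point to double-check is that the modeling limit theorem for nowhere dense graph classes applies verbatim to their monadic expansions, which is routine since the Gaifman graph is unchanged.
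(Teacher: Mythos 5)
Your proposal is correct and follows essentially the same route as the paper: apply Lemma~\ref{lem:netseq} to obtain the universal-lift sequence $\seq{B}$, invoke the modeling limit theorem of \cite{modeling_jsl} for $\seq{B}$, and take the shadow $\mathbf L=P(\mathsf{Sh})(\widehat{\mathbf L})$ as the quasi-limit. The paper states the final verification in one unexplained sentence, whereas you correctly supply the missing details (the pushforwards $\mathbf L^{(c)}=P(\mathsf Z_c)(\widehat{\mathbf L})$ as admissible lifts, the pigeonhole selection of a single index $c^\ast$, and the triangle-inequality estimate), all of which are exactly what the paper's argument implicitly relies on.
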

\begin{proof}
Let $\seq{A}$ be a local-global convergent of graphs in a nowhere dense class.
According to Lemma~\ref{lem:netseq}
there exists a local convergent sequence $\seq{B}$ of marked graphs with $P(\mathsf{Sh})(\seq{B})=\seq{A}$,  such that for every $\epsilon>0$ there exists some integer $n_0$  such that 
for every $n\geq n_0$ and every $\mathbf C\in  P(\mathsf{Sh})^{-1}(\mathbf A_n)$ there exists $1\leq c\leq h(\epsilon)$ with
\[
{\rm dist}_{\mathfrak X}(\mathbf C,P(\mathsf Z_c)(\mathbf B_n))<\epsilon.
\]
According to \cite{modeling_jsl} the sequence $\seq{B}$ has a modeling limit
$\mathbf L^+$. Then $\mathbf L=P(\mathsf{Sh}(\mathbf L^+)$ is a modeling quasi-limit of $\seq{A}$. 
\end{proof}

We conjecture that it is possible to refine the  notion of admissible lift and get the reverse direction.

\begin{conjecture}
	Every local-global convergent sequence of graphs in a nowhere dense class has a  modeling limit.
\end{conjecture}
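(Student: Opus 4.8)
The plan is to promote the quasi-limit of Theorem~\ref{thm:quasilimit} to a genuine modeling limit. That construction produces a $\tau$-modeling $\mathbf L^+$, the modeling limit (via \cite{modeling_jsl}) of the universal net sequence $\seq{B}$ of Lemma~\ref{lem:netseq}, and puts $\mathbf L=P(\mathsf{Sh})(\mathbf L^+)$. The renamed modelings $P(\mathsf Z_c)(\mathbf L^+)$ ($c\in\mathbb N$) form a countable family of \emph{admissible} (i.e.\ modeling) lifts of $\mathbf L$ that, by Lemma~\ref{lem:netseq}, is ${\rm dist}_{\mathfrak X}$-dense in the set of limits of local-convergent lift sequences of $\seq{A}$. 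A genuine limit requires two strengthenings of this density: an \emph{exact} forward direction --- every such limit is carried by an admissible lift of $\mathbf L$ --- and a \emph{reverse} direction --- every admissible lift of $\mathbf L$ is itself the limit of a local-convergent sequence of finite lifts. The single obstruction underlying both, flagged in the introduction and in \cite{hatami2014limits}, is that the measures carried by Borel colourings of a fixed modeling need not form a weakly closed set.

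\textbf{Forward direction.} For a local-convergent lift sequence $\seq{A}^+$ with limit $\mu$, Lemma~\ref{lem:netseq} gives indices $c(\epsilon)$ with ${\rm dist}_{\mathfrak X}(P(\mathsf Z_{c(\epsilon)})(\mathbf L^+),\lim\seq{A}^+)<\epsilon$, so $\mu$ is a ${\rm dist}_{\mathfrak X}$-limit of statistics of the Borel colourings $(M_{\beta(c(\epsilon),j)}(\mathbf L^+))_j$ of $\mathbf L$. To realise $\mu$ exactly by a single Borel lift I would use that a nowhere dense class is stable, indeed NIP \cite{Adler2013}: in a sufficiently saturated modeling extension of $\mathbf L$ the limiting Keisler measure is definable and hence carried by an honest Borel colouring, which projects to an admissible lift of $\mathbf L$ with limit $\mu$. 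Equivalently one may pass to the inverse limit of the standard Borel spaces attached to the successive precisions, a construction that makes the limiting marks Borel by fiat while leaving the shadow equal to $\mathbf L$.

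\textbf{Reverse direction --- the principal obstacle.} The hard half is the converse: from an admissible Borel lift $\mathbf M$ of $\mathbf L$ one must manufacture finite lifts $\mathbf A_n^+$ with $\lim{\rm dist}_{\mathfrak X}(\mathbf A_n^+,\mathbf M)=0$. In bounded degree this is routine because the $d$-neighbourhood of a set of measure $\le\epsilon$ has measure $\le\Delta^d\epsilon$, so a Borel colouring is approximable by a finite-radius-definable one with negligible perturbation of $r$-ball statistics; under unbounded degree a single hub can make $\nu_{\mathbf A_n}(\ball[1]{\mathbf A_n}(v_n))$ close to $1$, and recolouring a measure-$\epsilon$ set can corrupt a constant fraction of local types. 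I would attack this through the cluster decomposition of Theorem~\ref{thm:newcluster}: on each globular/atomic cluster $M_{i,j}(\seq{A}^*)$ neighbourhoods are confined to bounded-radius balls, so $\mathbf M$ transfers to finite colourings there using the low-tree-depth colourings and quasi-wideness available in any nowhere dense class, while the hubs responsible for the failure of the $\Delta^d\epsilon$ bound are swept into the negligible nebula $N(\seq{A}^*)$. This is exactly the refinement of \emph{admissible lift} the paper anticipates: declaring admissible only those Borel colourings measurable for the tail $\sigma$-algebra generated by the finite-radius local statistics makes the admissible family weakly closed, which simultaneously supplies the exact forward direction and renders the reverse approximation stable.

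\textbf{Expected crux.} The decisive missing ingredient is a finite-approximation theorem for Borel colourings of nowhere dense modelings in the lift-Hausdorff metric that controls $r$-neighbourhood measure growth in spite of unbounded degree. I expect it to rest on the bounded shallow-minor density of nowhere dense classes, used to bound $\nu_{\mathbf A_n}(\ball[r]{\mathbf A_n}(\Xseq{X}))$ for all but a negligible set of roots, with the exceptional high-growth vertices absorbed into the residual and nebula parts of Theorem~\ref{thm:newcluster}; establishing such a lemma is precisely what separates the conjecture from the quasi-limit of Theorem~\ref{thm:quasilimit}.
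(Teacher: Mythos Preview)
The statement you are attempting to prove is a \emph{Conjecture} in the paper, not a theorem: the paper offers no proof, and explicitly presents it as open after establishing only the quasi-limit in Theorem~\ref{thm:quasilimit}. So there is no paper proof to compare against; the relevant question is whether your proposal actually closes the gap.

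It does not. What you have written is an honest strategy sketch, not a proof, and you yourself flag the missing piece: the ``finite-approximation theorem for Borel colourings of nowhere dense modelings'' that would control $r$-neighbourhood measure growth despite unbounded degree. This is exactly the obstruction the paper isolates in the introduction and at the end of Section~\ref{sec:quasi} (``the reverse direction appears to be quite difficult to handle in general''), and your proposal does not supply it. The appeals to stability/NIP of nowhere dense classes, to Keisler measures being definable in a saturated extension, and to the cluster decomposition of Theorem~\ref{thm:newcluster} are suggestive but none of them, as stated, yields the required lemma: definability of a Keisler measure in a saturated model does not by itself produce a Borel colouring \emph{of the original modeling $\mathbf L$} with the prescribed local statistics, and the cluster decomposition controls globular pieces but gives no quantitative bound on how recolouring a measure-$\epsilon$ set perturbs local types inside the residual or nebula parts. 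Your ``expected crux'' paragraph is an accurate diagnosis of what remains open, which means the proposal is a restatement of the conjecture's difficulty rather than a resolution of it.
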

\providecommand{\bysame}{\leavevmode\hbox to3em{\hrulefill}\thinspace}
\providecommand{\MR}{\relax\ifhmode\unskip\space\fi MR }
% \MRhref is called by the amsart/book/proc definition of \MR.
\providecommand{\MRhref}[2]{%
  \href{http://www.ams.org/mathscinet-getitem?mr=#1}{#2}
}
\providecommand{\href}[2]{#2}

\end{document}